\definecolor{mycolor1}{rgb}{0.105882,0.619608,0.466667}
\definecolor{mycolor2}{rgb}{0.85098,0.372549,0.00784314}
\definecolor{mycolor3}{rgb}{0.458824,0.439216,0.701961}
\definecolor{mycolor4}{rgb}{0.905882,0.160784,0.541176}
\definecolor{mycolor5}{rgb}{0.4,0.65098,0.117647}
\definecolor{mycolor6}{rgb}{0.65098,0.462745,0.113725}
\definecolor{mycolor7}{rgb}{0.901961,0.670588,0.00784314}
\definecolor{mycolor8}{rgb}{0.4,0.4,0.4}
\definecolor{mycolor9}{rgb}{0.301961,0,0.294118}
\definecolor{mycolor10}{rgb}{0.0313725,0.25098,0.505882}
\newcommand{\arccot}{\operatorname{arccot}}
\newcommand{\diam}{\mathrm{diam}}
\newcommand{\argmin}[1]{\underset{#1}{\operatorname{argmin}}}
\newcommand{\inner}[2]{\left\langle{#1},{#2}\right\rangle}
\newcommand{\trace}{\mathrm{Tr}}
\newcommand{\Proj}{\mathrm{Proj}}
\newcommand{\Exp}{\mathrm{Exp}}
\newcommand{\Log}{\mathrm{Log}}
\newcommand{\Retr}{\mathrm{R}}
\newcommand{\inj}{\mathrm{inj}}
\newcommand{\St}{\mathrm{St}}
\newcommand{\T}{\mathrm{T}}
\newcommand{\N}{\mathrm{N}}
\newcommand{\Rk}{{\mathbb{R}^{k}}}
\newcommand{\reals}{{\mathbb{R}}}
\newcommand{\naturals}{\mathbb{N}}
\newcommand{\Rn}{{\mathbb{R}^n}}
\newcommand{\Rm}{{\mathbb{R}^m}}
\newcommand{\grad}{\mathrm{grad}}
\newcommand{\Hess}{\mathrm{Hess}}
\newcommand{\D}{\mathrm{D}}
\newcommand{\dds}{\frac{\mathrm{d}}{\mathrm{d}s}}
\newcommand{\calE}{\mathcal{E}}
\newcommand{\calK}{\mathcal{K}}
\newcommand{\calT}{\mathcal{T}}
\newcommand{\calM}{\mathcal{M}}
\newcommand{\calN}{\mathcal{N}}
\newcommand{\calP}{\mathcal{P}}
\newcommand{\rank}{\operatorname{rank}}
\newcommand{\Kmax}{K_{\mathrm{max}}}
\newcommand{\Sn}{\mathbb{S}^{n-1}}
\newcommand{\intd}{\mathrm{d}}
\newcommand{\TODOF}[1]{}
\newcommand{\dist}{\mathrm{dist}}
\newcommand{\Wcsloc}{W_{\mathrm{cs}}^\mathrm{loc}}
\newcommand{\gpp}{g_{\mathrm{PP}}}
\newcommand{\ggd}{g_{\mathrm{GD}}}
\newtheorem{theorem}{Theorem}[section]
\newtheorem{lemma}[theorem]{Lemma}
\newtheorem{proposition}[theorem]{Proposition}
\newtheorem{corollary}[theorem]{Corollary}
\newtheorem{definition}[theorem]{Definition} 
\newtheorem{remark}[theorem]{Remark}
\def\@seccntformat#1{\protect\makebox[0pt][r]{\csname the#1\endcsname\hspace{12pt}}}\makeatother
\newcommand{\aref}[1]{\hyperref[#1]{A\ref{#1}}}
\newif\ifarxiv
\newcommand{\arxivonly}[1]{\ifarxiv #1\fi}
\title{Gradient descent avoids strict saddles\\with a simple line-search method too}
\author{
  Andreea-Alexandra Mu\c{s}at and Nicolas Boumal\thanks{Ecole Polytechnique F\'ed\'erale de Lausanne (EPFL), Institute of Mathematics,
	  \texttt{\{andreea.musat, nicolas.boumal\}@epfl.ch}.}
}
\date{\today}
\begin{document}

\maketitle

\begin{abstract}
	It is known that gradient descent (GD) on a $C^2$ cost function generically avoids strict saddle points when using a small, constant step size.
	However, no such guarantee existed for GD with a line-search method.
	We provide one for a modified version of the standard Armijo backtracking method with generic, arbitrarily large initial step size.
	The proof underlines the double role of the Luzin $N^{-1}$ property for the iteration maps, and allows to forgo the habitual Lipschitz gradient assumption.

	We extend this to the Riemannian setting (RGD), assuming the retraction is real analytic (though the cost function still only needs to be $C^2$).
	In closing, we also improve guarantees for RGD with a constant step size in some scenarios.
\end{abstract}
	
\noindent \textbf{Keywords:} center stable manifold, fixed-point avoidance, Luzin property, nonconvex.


\section{Introduction}

Many standard minimization algorithms may, in theory, converge to saddle points.
Fortunately, this almost never happens in practice because, while a nearby saddle point may slow progress, it rarely attracts iterates. 
To formalize this empirical observation, \cite{lee2016gradient} showed that gradient descent with a fixed step size $\alpha$ avoids 
strict saddles from almost all initializations.
Their early result assumes the saddles are isolated and the step size satisfies $\alpha < 1/L$,
where $L$ is the Lipschitz constant of the gradient.
\cite{panageas2017gradient} soon extended this result to non-isolated saddles under the same step size condition.

These results have been generalized to other algorithms beyond gradient descent; see notably their joint paper \citep{lee2019strictsaddles} and further \emph{related work} below.
However, such extensions generally apply to algorithms of the form $x_{t+1} = g(x_t)$ with a generic initial point $x_0$, where $g$ is a sufficiently regular iteration map.
Some works extend the analysis to algorithms of the form $x_{t+1} = g_t(x_t)$, where each $g_t$ satisfies certain regularity conditions as well as additional conditions that link them together.

In both cases, the sequence of iteration maps is fixed in advance:
the algorithm does not adapt to the iterates generated so far.
This limitation stems from the proofs' reliance on the Center Stable Manifold Theorem (CSMT),
a fundamental result in dynamical systems.
The CSMT does not allow for the kind of adaptivity that would dynamically adjust the iteration map based on the sequence of iterates.

Still, the empirical observation that ``reasonable algorithms typically avoid saddle points'' extends well beyond gradient descent with a constant step size and other such simple methods.
It is natural to wonder whether this more general picture too can be formalized.
To begin with, we ask:
\begin{center}
	\emph{Does gradient descent with a standard line-search method avoid strict saddle points?}
\end{center}
\noindent
Surprisingly, even this falls outside the scope of existing results.
We provide a positive answer for a modified backtracking line-search (Algorithm~\ref{algo:bkt-stab}).
In so doing, we also improve results for constant step sizes, in both the Euclidean and the Riemannian settings.

\subsection*{Saddle avoidance with line-search in $\Rn$}

Consider minimizing $f \colon \Rn \to \reals$ by iterating $x_{t+1} = x_t - \alpha_t \nabla f(x_t)$.
A standard backtracking line-search sets an initial step size $\bar{\alpha} > 0$, a decay parameter $\tau \in (0, 1)$ and a tolerance parameter $r \in (0, 1)$, then selects the step size $\alpha_t$ as the first value in the geometric sequence $\bar{\alpha}, \tau \bar{\alpha}, \tau^2 \bar{\alpha}, \dots$ that satisfies the sufficient decrease condition $f(x_t) - f(x_{t+1}) \geq r \alpha_t \| \nabla f(x_t) \|^2$---see \cite[Alg.~3.1]{nocedal2006optimization}.

The resulting algorithm can be viewed in two ways, neither of which fits neatly into existing frameworks for saddle avoidance.
First, one can write the iteration as $x_{t+1} = g(x_t)$, where $g(x) = x - \alpha(x) \nabla f(x)$ and $\alpha(x)$ is the step size returned by the line-search at point $x$.
This perspective reveals that $g$ is generally discontinuous: the discontinuity arises because small perturbations in $x$ can change the outcome of the line-search, leading to sudden jumps in the chosen step size.
As a result, the analysis tools that rely on smoothness of the iteration map $g$ do not apply.

Alternatively, one can express the update rule as $x_{t+1} = g_{i(x_t)}(x_t)$, where each 
$g_i(x) = x - \tau^i \bar{\alpha} \nabla f(x)$ is smooth, and the index $i(x_t)$ 
selects the smallest integer exponent $i$ satisfying the sufficient decrease condition.
This viewpoint retains smoothness of each candidate iteration map, but introduces a new complication: the selection $i(x_t)$ depends on the current iterate $x_t$ which itself depends on $x_0$.
Consequently, the algorithm does not apply a pre-specified sequence of iteration maps (common to all $x_0$).
As such, classical analyses of non-adaptive dynamical systems do not apply.

In both formulations, the essential difficulty is the same: the iteration lacks the regularity or structure assumed by the CSMT and related tools.
These difficulties might be resolved if, for example, the selected step sizes eventually become constant or periodic, but this does not seem to be the case with the standard line-search described above.

In order to make some progress, we focus on the second viewpoint with a slightly modified procedure.
In doing so, we aim to retain two fundamental aspects of line-search methods:
\begin{description}
	\item[Function-class adaptivity.] When $\nabla f$ is globally $L$-Lipschitz, the method achieves almost the same (worst-case) performance as if $L$ were known and we set $\alpha_t = 1/L$.
	\item[Local-geometry adaptivity.] The step size should automatically adjust to the local curvature of the function, without requiring any explicit estimates.
\end{description}
Guided by these principles, we propose the modified procedure in Algorithm~\ref{algo:bkt-stab}.
Given an arbitrary threshold $\varepsilon > 0$, we perform a standard line-search whenever the gradient norm is at least $\varepsilon$.
If the gradient norm falls below this threshold, rather than starting the line-search from the initial step size $\bar{\alpha}$ at the next iteration,
we use the most recently selected step size $\alpha_{t}$ as the first candidate.
This is the only departure from the standard Armijo backtracking line-search scheme, and it only affects the trajectory in the small-gradient regime.
If the trajectory eventually escapes a saddle, the algorithm naturally returns to the standard line-search regime.

\begin{algorithm}[t]
	\caption{Gradient descent with a stabilized Armijo backtracking line-search}
	\begin{algorithmic}
	\Statex \textbf{Input:} \(\tau, r \in (0, 1), x_0 \in \calM, \bar{\alpha} > 0, \varepsilon > 0, \) retraction $\Retr$, e.g., if $\calM = \reals^n$, $\Retr_x(v) = x+v$
	\State \(\alpha_0 \gets \bar{\alpha}\)
	\For{\(t = 0, 1, 2, \dots\)}
		\While{\(f(x_t) - f(\Retr_{x_t}(-\alpha_t \grad f(x_t))) < r \alpha_t \| \grad f(x_t) \|^2\)}
			\State \(\alpha_t \gets \tau \alpha_t\)
		\EndWhile 
		\State \(x_{t+1} \gets \Retr_{x_t} (- \alpha_t \grad f(x_t))\)
		\If{\(\|\grad f(x_{t+1})\| \ge \varepsilon\)}
			\State \(\alpha_{t+1} \gets \bar\alpha\) \Comment{Reset initial step size for standard line-search}
		\Else
			\State \(\alpha_{t+1} \gets \alpha_t\) \Comment{Maintain previously used step size instead} \label{line:non-increasing-step}
		\EndIf
	\EndFor
	\end{algorithmic}
	\label{algo:bkt-stab}
\end{algorithm}

\noindent As we shall see, the modification preserves both adaptivity notions above, while ensuring that if the algorithm converges, then the step size eventually stabilizes. 
Once the step size is constant, the update rule reduces to a smooth, fixed-step-size map.
Exploiting this perspective, our main Euclidean result is as follows.

\begin{definition} \label{def:strictsaddlesandavoidance}
	Let $f \colon \Rn \to \reals$ be $C^2$.
	A \emph{strict saddle} of $f$ is a point where the gradient of $f$ is zero and the Hessian of $f$ has a negative eigenvalue.\footnote{As stated, the definition of strict saddle includes some local maxima. This is acceptable, as these are also critical points to which, typically, we do not expect or desire algorithms to converge.}
	An iterative minimization algorithm applied to $f$ \emph{avoids the strict saddles of $f$} if the set of initial points $x_0$ such that the algorithm converges to a strict saddle of $f$ has measure zero.
\end{definition}
\begin{theorem} \label{thm:gd-linesearch-intro}
	Let $f \colon \Rn \to \reals$ be $C^2$.
	For all $\tau, r \in (0, 1)$ and almost any initial step size $\bar{\alpha} > 0$, the stabilized backtracking line-search gradient descent (Algorithm~\ref{algo:bkt-stab}) avoids the strict\footnote{This cannot be relaxed to non-strict saddles. Consider $f(x) = x^3$: for all $x_0 \in [0, 1-r]$ and $\bar{\alpha} \leq 1/3$, the algorithm selects $\alpha_t = \bar{\alpha}$ for all $t$ (reducing to GD with constant step size) and it converges to the non-strict saddle $x = 0$.} saddles of $f$.
\end{theorem}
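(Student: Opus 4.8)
\emph{Proof idea.} The plan is to exploit the single feature that distinguishes Algorithm~\ref{algo:bkt-stab} from ordinary backtracking: warm-starting each line search at the previous step size makes the sequence $(\alpha_t)$ nonincreasing and confined to the countable grid $\{\tau^j\bar\alpha : j\ge 0\}$. The first step is a \emph{stabilization lemma}: if the iterates converge, then the step size is eventually constant. Indeed, $(\alpha_t)$ is nonincreasing and bounded, hence convergent; if its limit were $0$, then once $x_t$ lies in a small ball around the limit point on which $\nabla f$ is $L$-Lipschitz and $\|\nabla f(x_t)\|$ is tiny, every step $\alpha\le 2(1-r)/L$ already passes the sufficient-decrease test, so the warm start $\alpha_{t-1}$ is accepted immediately and $\alpha_t=\alpha_{t-1}>0$ --- a contradiction; so the limit is positive, and since the grid is discrete $(\alpha_t)$ is eventually equal to some $\tau^{j^\star}\bar\alpha$. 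On that tail the recursion is exactly $x_{t+1}=g_{j^\star}(x_t)$ with the smooth fixed-step map $g_j(x):=x-\tau^j\bar\alpha\,\nabla f(x)$.

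Let $\Sigma$ denote the set of strict saddles. By the stabilization lemma, if the algorithm from $x_0$ converges to a point of $\Sigma$, then for some $j^\star$ and some $T$ the point $x_T$ lies in $B_{j^\star}:=\{\,y : g_{j^\star}^k(y)\to x^\star \text{ for some } x^\star\in\Sigma\,\}$, while $x_T=g_{j_{T-1}}\circ\cdots\circ g_{j_0}(x_0)$ for a nondecreasing index sequence $j_0\le\cdots\le j_{T-1}$ produced by the line search. Hence the set of ``bad'' initializations is contained in
\[
\bigcup_{j\ge 0}\ \bigcup_{T\ge 0}\ \bigcup_{0\le j_0\le\cdots\le j_{T-1}}\ \big(g_{j_{T-1}}\circ\cdots\circ g_{j_0}\big)^{-1}(B_j),
\]
a countable union; so it suffices to prove, for almost every $\bar\alpha$: (i) each $B_j$ is a null set; (ii) each $g_j$ has the \emph{inverse Luzin property}, i.e.\ the preimage under $g_j$ of a null set is null.

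A $C^1$ map has the inverse Luzin property as soon as its critical set $\{x:\det\D g_j(x)=0\}$ is null: off this set $g_j$ is a local diffeomorphism (inverse function theorem, plus a Lindel\"{o}f covering of the open complement), while the part of the preimage lying in the critical set is contained in a null set. Now $\det\D g_j(x)=\prod_i\big(1-\tau^j\bar\alpha\,\lambda_i(x)\big)$, where $\lambda_i(x)$ are the eigenvalues of $\nabla^2 f(x)$, and for each fixed $x$ this is a nonzero polynomial in the step size, hence vanishes for only finitely many step-size values; by Fubini, $\{(x,\alpha):\det(I-\alpha\nabla^2 f(x))=0\}$ is null in $\Rn\times(0,\infty)$, so for almost every $\alpha$ its $x$-slice is null, and removing the set of $\bar\alpha$ for which some $\tau^j\bar\alpha$ is such an exceptional value (a countable union of rescaled null sets, hence null) establishes (ii). This genericity is exactly the role of ``almost any initial step size $\bar\alpha$''. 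For (i): at each $x^\star\in\Sigma$ the differential $\D g_j(x^\star)=I-\tau^j\bar\alpha\,\nabla^2 f(x^\star)$ has the eigenvalue $1-\tau^j\bar\alpha\,\lambda_-(x^\star)>1$ coming from a negative Hessian eigenvalue $\lambda_-(x^\star)$; a local center-stable manifold theorem then produces, near $x^\star$, a $C^1$ embedded disk of dimension $<n$ (hence null) containing every point whose forward $g_j$-orbit stays near $x^\star$. A Lindel\"{o}f argument over $\Sigma$ gives countably many such disks whose iterated $g_j$-preimages cover $B_j$; by (ii), $B_j$ is null.

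The crux --- and the reason this lies outside the frameworks of \cite{lee2016gradient,panageas2017gradient,lee2019strictsaddles} --- is step (i) \emph{without} a globally Lipschitz gradient. Those works use $\alpha<1/L$ to make $g_\alpha$ a diffeomorphism with uniformly controlled spectrum; here one has only local control and must (a) invoke a center-stable manifold theorem requiring merely local $C^1$ regularity and, crucially, \emph{not} the invertibility of $g_j$ --- zero eigenvalues of $\D g_j(x^\star)$ can genuinely occur at strict saddles, but they only enlarge the center bundle, and the Lyapunov--Perron fixed-point construction inverts only along the expanding directions, which are always invertible --- and (b) substitute the inverse Luzin property, obtained from genericity of $\bar\alpha$, for the missing uniform Lipschitz bound. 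A further technical point is making the decomposition airtight: verifying that only countably many step-size patterns can precede stabilization, and that the claim ``the recursion has collapsed to $g_{j^\star}$ on the tail'' is consistent with the line-search rule.
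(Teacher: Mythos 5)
Your proposal is correct and follows essentially the same route as the paper's proof of Theorem~\ref{thm:modified-armijo} (of which Theorem~\ref{thm:gd-linesearch-intro} is the Euclidean case): step-size stabilization via a local Lipschitz bound, the non-invertible (Hirsch--Pugh--Shub) center-stable manifold theorem plus Lindel\"{o}f for local control, the Luzin $N^{-1}$ property of each fixed-step map obtained from the polynomial-in-$\alpha$ determinant and a Fubini/Tonelli slicing with a countable rescaling of the exceptional null set of $\bar{\alpha}$, and globalization by countably many iterated preimages. The only differences are cosmetic (your union over finite nondecreasing index sequences versus the paper's iterated sets $Z_k$, and a harmless slip in calling zero eigenvalues part of the center rather than the stable bundle).
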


\noindent
Details (including a generalization to Riemannian manifolds) are given in Section~\ref{section:line-search}.
To get there, we first build up general results about avoidance of unstable fixed points in dynamical systems (Section~\ref{section:ingredients}), and improved guarantees for GD with constant step size (Section~\ref{section:constantstepsizeGD}).
Afterwards, we provide additional results for the Riemannian case with constant (not necessarily generic) step size (Sections~\ref{section:additional-results-c2} and~\ref{section:metric-projection-retraction}).

\subsection*{Proof ideas}

At every iteration, Algorithm~\ref{algo:bkt-stab} computes the next point as $x_{t+1} = g_{i_t}(x_t)$ with some map $g_{i_t}$ selected from the collection of maps $g_i(x) = x - \tau^i \bar{\alpha} \nabla f(x)$ where $i \geq 0$ is an integer.
If the sequence of points $x_0, x_1, x_2, \ldots$ converges, then in a first phase, the step sizes can oscillate, but \emph{eventually} they become non-increasing and (we shall argue) stabilize.

Thus, the algorithm has two phases.
In the early phase, before the step size stabilizes, we run standard Armijo line-search, so the algorithm selects a sequence of iteration maps that may vary from step to step.
This sequence is not predetermined: it depends on $x_0$ in a discontinuous fashion.
In the later phase, once the accepted step size becomes constant, the algorithm reduces to gradient descent with a fixed step size.
In that regime, the tools from smooth dynamical systems such as the CSMT apply.

Accordingly, the analysis has two parts.
First, we assert local control using the CSMT.
Explicitly, we show that there exists a set $W$ of measure zero such that, if the algorithm converges to a strict saddle, then the iterates eventually all belong to $W$.
Second, we globalize the result by noting that if some iterate $x_t$ belongs to $W$, then $x_{t-1}$ belongs to the preimage of $W$ by the iteration map that was chosen at step $t-1$.
If that map has the property that preimages of measure-zero sets have measure zero---called the \emph{Luzin $N^{-1}$ property} (Definition~\ref{def:luzin})---then we see that $x_{t-1}$ also belongs to a measure zero set.
We do not know which iteration map was chosen at step $t-1$, but there are only countably many options, which is good enough.
Iterating this reasoning, we find that $x_0$ itself belongs to a measure-zero set, as desired.

Thus, the local-to-global argument only requires the individual iteration maps $g_i$ to have the Luzin $N^{-1}$ property.
As it happens, this is the \emph{same property} that we need for the local part of the argument.
Indeed, we handle this with the usual proof by \citet{lee2019strictsaddles}, only with a necessary improvement in the ingredients.

The classical proof assumes that $\nabla f$ is $L$-Lipschitz continuous, and that the constant step size $\alpha$ satisfies $\alpha < 1/L$, or, in some extensions, $\alpha < 2/L$ (see \emph{related work}).
However, the main purpose of a line-search method is to avoid such  impractical restrictions on the step size.
Thus, the step sizes we entertain may be far larger than previously allowed, and that is true of the eventual (stabilized) step size as well.
Large step sizes are incompatible with the usual proof techniques for the following reasons:
\begin{itemize}
	\item The version of the CSMT used in this literature~\cite[Thm.~III.7]{shub1987book} requires the differential of the iteration map to be invertible at strict saddles.
	\item The Luzin $N^{-1}$ property has so far only been established for GD with step sizes up to $2/L$, where $L$ is the Lipschitz constant of the gradient. 
\end{itemize}

To resolve this, we call upon a better (and just as classical) version of the CSMT~\citep[Thm.~5.1]{hirsch1977invariant} which lifts the aforementioned restriction.
Then, as discussed in Section~\ref{section:ingredients}, if the differential of the iteration map is invertible \emph{almost} everywhere, the Luzin $N^{-1}$ property holds without requirements at any specific points (not even at the saddle points themselves).
This yields the following \emph{general} statement about dynamical systems, proved in Section~\ref{section:ingredients}.
Compare with~\citep[Thm.~2]{lee2019strictsaddles} which requires $\D g(x)$ to be invertible for all $x$.
\begin{theorem} \label{thm:generalsystemintro}
	Let $\calM$ be a smooth manifold.
	Consider the dynamical system $x_{t+1} = g(x_t)$ with $C^1$ iteration map $g \colon \calM \to \calM$.
	Assume $\D g(x)$ is invertible for almost all $x \in \calM$.
	Then, the set of initial points $x_0$ such that the system converges to an unstable fixed point (a point $x^*$ such that $g(x^*) = x^*$ and $\D g(x^*)$ has an eigenvalue $\lambda$ with $|\lambda| > 1$) has measure zero.
\end{theorem}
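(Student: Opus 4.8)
The plan is to follow the classical Lee et al. scheme but replace the "diffeomorphism everywhere" requirement by the weaker "Luzin $N^{-1}$ property", which the hypothesis that $\D g(x)$ is invertible almost everywhere will guarantee. The argument has a local part (produced by the Center Stable Manifold Theorem) and a global part (a pullback/countable-union argument).

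\medskip

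\textbf{Step 1: local center-stable manifolds.} Let $x^*$ be an unstable fixed point, i.e.\ $\D g(x^*)$ has an eigenvalue of modulus $> 1$. Working in a chart around $x^*$, split the spectrum of $A := \D g(x^*)$ into the part with modulus $\le 1$ and the part with modulus $> 1$ (the latter is nonempty). Apply the version of the CSMT from \citep[Thm.~5.1]{hirsch1977invariant}, which crucially does \emph{not} require $A$ to be invertible: this yields a local center-stable manifold $W^{\mathrm{cs}}_{\mathrm{loc}}(x^*)$ of dimension strictly less than $\dim \calM$ (it omits the unstable directions), which is locally invariant and contains every point whose forward orbit stays in a small enough neighborhood of $x^*$. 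In particular, if the system converges to $x^*$, then after finitely many steps all iterates lie in $W^{\mathrm{cs}}_{\mathrm{loc}}(x^*)$, hence in a set of measure zero (a submanifold of positive codimension). Since $\calM$ is second countable, the set of unstable fixed points to which the system could converge is covered by countably many such charts, so there is a \emph{single} set $W \subseteq \calM$ of measure zero such that: if the system converges to an unstable fixed point, then $x_t \in W$ for all large $t$.

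\medskip

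\textbf{Step 2: the Luzin $N^{-1}$ property of $g$.} Here is where the almost-everywhere invertibility of $\D g$ enters. I would invoke the result from Section~\ref{section:ingredients} (Definition~\ref{def:luzin} and the surrounding lemmas) that a $C^1$ map whose differential is invertible almost everywhere has the Luzin $N^{-1}$ property: the preimage $g^{-1}(S)$ of any measure-zero set $S$ has measure zero. (Morally: decompose $\calM$ into the open set $U$ where $\D g$ is invertible — on which $g$ is locally a diffeomorphism, so preimages of null sets are null by the change-of-variables / inverse function theorem — and the null set $\calM \setminus U$, whose preimage one controls via Sard-type or coarea arguments; this is exactly what Section~\ref{section:ingredients} supplies.)

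\medskip

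\textbf{Step 3: globalize by pullback.} Let $X^* = \{x_0 : x_t \to \text{some unstable fixed point}\}$. By Step~1, $X^* \subseteq \bigcup_{k \ge 0} \{x_0 : g^t(x_0) \in W \text{ for all } t \ge k\} \subseteq \bigcup_{k \ge 0} (g^k)^{-1}(W)$. By Step~2 and induction, $g$ has the Luzin $N^{-1}$ property, hence so does every iterate $g^k = g \circ \cdots \circ g$ (composition of Luzin $N^{-1}$ maps is Luzin $N^{-1}$). Therefore each $(g^k)^{-1}(W)$ has measure zero, and a countable union of null sets is null, so $X^*$ has measure zero. This is exactly the claim.

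\medskip

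\textbf{Main obstacle.} The delicate point is Step~1: one must use the \emph{right} CSMT. The standard statement used in this literature (\citep[Thm.~III.7]{shub1987book}, as in \citep{lee2019strictsaddles}) presumes $A = \D g(x^*)$ is invertible (so that $g$ is a local diffeomorphism near $x^*$), which we have deliberately \emph{not} assumed. Switching to \citep[Thm.~5.1]{hirsch1977invariant} removes that requirement, but one should check that its hypotheses are met (local $C^1$ data, no invertibility needed) and that its conclusion still gives an invariant manifold of codimension equal to the number of unstable eigenvalues, with the "trapping" property that orbits staying near $x^*$ must lie on it. Everything else — the Luzin property and the countable pullback — is comparatively routine once Section~\ref{section:ingredients} is in hand.
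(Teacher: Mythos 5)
Your proposal is correct and follows essentially the same route as the paper: the paper proves this as a corollary of its template theorem (Theorem~\ref{thm:saddle-avoidance-template}), whose proof applies the Hirsch--Pugh--Shub version of the CSMT at each unstable fixed point, extracts a countable subcover via Lindel\"of/second countability, and then pulls back the resulting measure-zero sets through iterates of $g$ using the Luzin $N^{-1}$ property, which is exactly your Steps 1--3; the equivalence of Luzin $N^{-1}$ with almost-everywhere invertibility of $\D g$ is the paper's Theorem~\ref{thm:luzin-property}, as you invoke.
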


Then, using Fubini--Tonelli-type arguments, we show that the GD iteration map satisfies the Luzin $N^{-1}$ property for almost all choices of step size (Lemma~\ref{lemma:gd-luzin-almost-any-alpha}).
We later use this to show that for generic $\bar{\alpha}$, \emph{all} maps considered by the line-search have the Luzin $N^{-1}$ property.
In particular, unlike previous work, we no longer need to assume that $\nabla f$ is globally Lipschitz continuous.


\subsection*{Extension to the Riemannian case}

We extend the result above to the minimization of a function $f$ on a Riemannian manifold $\calM$.
Specifically, we consider Riemannian gradient descent (RGD) as $x_{t+1} = \Retr_{x_t}(-\alpha_t \grad f(x_t))$, where both the manifold and the retraction $\Retr$ are real analytic.
Recall that a retraction is a smooth map $\Retr \colon \T\calM \to \calM \colon (x, v) \mapsto \Retr_x(v)$ on the tangent bundle of $\calM$ such that each curve of the form $c(t) = \Retr_x(tv)$ satisfies $c(0) = x$ and $c'(0) = v$.
See Sections~\ref{section:RGDgeneralsetup} and~\ref{section:analytic} for further definitions and discussion, and Section~\ref{section:line-search} for a proof of the following.

\begin{theorem} \label{thm:riemannian-line-search-intro}
	Let $f \colon \calM \to \reals$ be $C^2$ on a real-analytic manifold $\calM$ with a Riemannian metric and a real-analytic retraction $\Retr$.
	For all $\tau, r \in (0, 1)$ and almost any initial step size $\bar{\alpha} > 0$, the stabilized backtracking line-search Riemannian gradient descent (Algorithm~\ref{algo:bkt-stab}) avoids the strict saddles of $f$.
\end{theorem}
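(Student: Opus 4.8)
The plan is to reduce the Riemannian statement to the same two-part architecture used for the Euclidean Theorem~\ref{thm:gd-linesearch-intro}, namely a local step powered by the center stable manifold theorem and a local-to-global step powered by the Luzin $N^{-1}$ property, while paying the extra cost that the ambient space is now a manifold and the iteration maps involve the retraction. First I would fix $\tau, r \in (0,1)$ and record the family of candidate iteration maps $g_i \colon \calM \to \calM$, $g_i(x) = \Retr_x(-\tau^i \bar\alpha \, \grad f(x))$, for integers $i \geq 0$. Since $\calM$ and $\Retr$ are real-analytic and $f$ is $C^2$, each $g_i$ is $C^1$. The stabilized backtracking RGD (Algorithm~\ref{algo:bkt-stab}) produces $x_{t+1} = g_{i_t}(x_t)$ where the accepted exponents satisfy $i_0 \leq i_1 \leq \cdots$; as in the Euclidean case I would argue that if the sequence $(x_t)$ converges then the exponents stabilize at some value $i_\infty$, so that eventually the iteration is exactly $x_{t+1} = g_{i_\infty}(x_t)$, a fixed smooth map.

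Next comes the local control. A strict saddle $x^*$ of $f$ has $\grad f(x^*) = 0$ and $\Hess f(x^*)$ with a negative eigenvalue; by the standard computation (differentiating $\Retr$ at the zero section) $\D g_i(x^*) = \Id - \tau^i \bar\alpha \, \Hess f(x^*)$ in the appropriate identification of $\T_{x^*}\calM$ with itself, so $\D g_i(x^*)$ has an eigenvalue $> 1$: strict saddles are unstable fixed points of every $g_i$. Applying the better CSMT~\citep[Thm.~5.1]{hirsch1977invariant}, exactly as invoked for Theorem~\ref{thm:generalsystemintro}, to the fixed map $g_{i_\infty}$ near each strict saddle yields a local center-stable manifold of positive codimension, hence locally measure zero; covering the (at most countable, in the relevant sense, or handled by a Lindel\"of argument) set of strict saddles gives a measure-zero set $W_{i_\infty}$ with the property that convergence to a strict saddle via $g_{i_\infty}$ forces the tail of the orbit into $W_{i_\infty}$. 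Taking $W = \bigcup_i W_i$ over the countably many possible stabilized exponents keeps $W$ of measure zero.

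Then the local-to-global globalization: if some $x_t \in W$, then $x_{t-1} \in g_{i_{t-1}}^{-1}(W)$ for one of the countably many maps $g_i$; if each $g_i$ has the Luzin $N^{-1}$ property (preimages of null sets are null), then $x_{t-1}$ lies in a countable union of null sets, hence in a null set, and iterating back to $t=0$ shows $x_0$ lies in a null set. So everything rests on establishing the Luzin $N^{-1}$ property for each $g_i$, and by the criterion used earlier this reduces to showing $\D g_i(x)$ is invertible for almost every $x \in \calM$. This is where the genericity of $\bar\alpha$ and the real-analyticity enter, and I expect it to be the main obstacle. The strategy is a Fubini--Tonelli / Sard-type argument: working in analytic charts, the map $(x, \alpha) \mapsto \det\bigl(\D_x[\Retr_x(-\alpha \tau^i \grad f(x))]\bigr)$ is real-analytic in $\alpha$ for fixed $x$ (since $\Retr$ is analytic and $\grad f$ enters analytically through the retraction, even though it is only $C^1$ in $x$), so for each $x$ it either vanishes identically in $\alpha$ or at only countably many $\alpha$; one must rule out the identically-vanishing case on a positive-measure set of $x$ (e.g. by checking it fails at $\alpha = 0$ where $\D g = \Id$, combined with an analyticity-in-$\alpha$ argument), then integrate to conclude that for almost every $\bar\alpha$ the bad set $\{x : \det \D g_i(x) = 0\}$ is null simultaneously for all $i$ (a countable intersection over $i$ of full-measure $\bar\alpha$-sets). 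The delicate points are: handling the low regularity of $f$ (only $C^2$, so $\grad f$ only $C^1$) against the need for analyticity in the $\alpha$ variable — which is exactly why the paper insists the retraction be analytic, so that the $\alpha$-dependence is analytic regardless of $x$; and making the chart-wise, $\sigma$-finite covering of $\calM$ compatible with the "almost any $\bar\alpha$" quantifier. Once the Luzin property is in hand for all $g_i$, the two parts combine verbatim as in the Euclidean proof to conclude that the set of $x_0$ converging to a strict saddle has measure zero.
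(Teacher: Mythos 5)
Your proposal is correct and follows essentially the same route as the paper: non-increasing step sizes that stabilize near the limit, CSMT-based local control for the stabilized map, globalization via the Luzin $N^{-1}$ property of all candidate maps $g_i$, and the genericity of $\bar{\alpha}$ obtained from real-analyticity in $\alpha$ (anchored at $\alpha = 0$ where $\D g = \Id$) together with a Fubini--Tonelli slicing argument and a countable union over the scalings $\tau^{-i}$. The only detail you leave implicit---why the accepted exponents cannot tend to infinity---is exactly the paper's local Lipschitz-type lower bound on accepted step sizes near the limit point, a standard line-search fact.
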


Of course, this encompasses the case $\calM = \Rn$ with retraction $\Retr_x(v) = x + v$, as in Theorem~\ref{thm:gd-linesearch-intro}.

In Theorem~\ref{thm:riemannian-line-search-intro}, some condition on the retraction is indeed necessary (though it remains open whether real-analyticity is the best choice).
This is because retractions need only satisfy local conditions near the origins of the tangent spaces.
Away from this neighborhood, they are a priori uncontrolled, allowing for counter-examples---see Remark~\ref{remark:rgd-analytic-requires-retraction-condition}.

Notwithstanding, the following remarks highlight that the real-analyticity assumptions on the manifold and retraction are satisfied in many practical situations.
(Note that $f$ only needs to be $C^2$.)

\begin{remark} \label{remark:real-analytic-manifolds}
	Common examples of real-analytic manifolds include the unit sphere $\Sn$ in $\Rn$, 
	the Stiefel manifold $\St(n, p)$ of orthonormal $p$-frames in $\Rn$,
	the orthogonal group $\mathrm{O}(n)$,
	the manifold of real matrices of fixed rank $r$ and
	the set of symmetric positive definite matrices,
	as well as products thereof.
\end{remark}

\begin{remark} \label{remark:exp-real-analytic}
	On a real-analytic manifold, if the Riemannian metric is also real analytic (which is common though not required by Theorem~\ref{thm:riemannian-line-search-intro}),
	then the exponential map (see Section~\ref{section:additional-results-c2}) is real analytic everywhere on its domain.
	This is because it emerges from solving real-analytic differential equations, which have real-analytic solutions by the Cauchy--Kowalewski theorem~\citep[\S2.8]{krantz2002primer}.

	The exponential map is a retraction with possibly restricted domain.
	For simplicity, we require the retraction to be defined on the entire tangent bundle.
	This is the case for the exponential map if $\calM$ is \emph{complete}~\citep[Thm.~6.19]{lee2018riemannian}.
	Thus, for $\Sn$, $\St(n, p)$ and $\mathrm{O}(n)$ with their usual metrics (and products thereof), the exponential map is a real-analytic retraction defined on the entire tangent bundle $\T\calM$.
\end{remark}	

\begin{remark} \label{remark:metric-projection-real-analytic}
	The metric projection onto a real-analytic submanifold $\calM$ of a Euclidean space $\calE$ is also real analytic wherever it is uniquely defined.
	Indeed, let $\Omega \subseteq \calE$ be the interior of the set of points $y \in \calE$ whose metric projection $\calP_\calM(y) := \argmin{x \in \calM} \|x - y\|$ is a singleton.
	The restriction $\calP_{\calM}\vert_{\Omega}$ is real analytic \citep[Thm.~4.1]{dudek1994nonlinearprojection},
	so the \emph{metric projection retraction} defined by $\Retr_x(v) = \calP_\calM(x + v)$ is real analytic on its domain.
	For $\Sn$, $\St(n, p)$ and $\mathrm{O}(n)$ (and products thereof), this domain is all of $\T\calM$~\citep[\S5.12]{boumal2020intromanifolds}.
\end{remark}

\subsection*{Further results for RGD with a specific constant step size}

In the Euclidean setting, we show that GD with a \emph{fixed} step size $\alpha$ avoids strict saddles for \emph{almost all} $\alpha > 0$.
This complements classical results which---assuming $\nabla f$ is $L$-Lipschitz---provide the same guarantee for \emph{all} $\alpha$ in the interval $(0, 1/L)$.
Both perspectives are valuable.

In the Riemannian setting, we provide the same ``almost all'' guarantee when assuming real-analyticity.
However, the corresponding classical results are not nearly as satisfying.
For example, \citet{lee2019strictsaddles} provide an ``interval'' guarantee for compact manifolds with the metric projection retraction, but the interval may be rather small.

To complement these results, we improve upon the latter by showing that, without assuming real-analyticity, the strict saddle avoidance guarantee holds for RGD with all step sizes in some interval, and the interval may be as large as $(0, 1/L)$ in some cases (e.g., for Hadamard manifolds with the exponential retraction).
See Sections~\ref{section:additional-results-c2} and~\ref{section:metric-projection-retraction}.

\subsection*{Related work}

\paragraph{Asymptotic saddle avoidance.}
As outlined in the introduction, the most influential papers on the topic of saddle avoidance for optimization algorithms are those by \citet{lee2016gradient} and \citet{panageas2017gradient}, and their joint paper~\citep{lee2019strictsaddles}.

Prior to those, the idea of using the stable manifold theorem to argue generic convergence to a local minimum already appears in the optimization literature with work by \citet[Appendix]{goudou2009gradient}.
They studied a continuous-time flow inspired by the heavy ball method applied to a coercive Morse function.

The base result by \citet{lee2019strictsaddles} regards GD with constant step size $\alpha < 1/L$, where $\nabla f$ is $L$-Lipschitz continuous.
They also obtain results for other optimization algorithms including coordinate descent, block coordinate descent, mirror descent and Riemannian gradient descent with metric projection retraction.
\citet{li2019alternating} apply similar ideas to study alternating minimization and proximal alternating minimization. 

\citet{schaeffer2020extending} extend the step size restriction on GD to allow $\alpha < \frac{2}{L}$, provided that the set $\{ x \in \Rn \mid \textrm{some eigenvalue of } \nabla^2 f(x) \textrm{ equals } 1/\alpha \}$ has measure zero and does not contain any saddle.
This assumption is a priori non-trivial to check: see Remark~\ref{rem:schaefferassumption} for additional context.

The Riemannian case is also analyzed by \citet{hou2020analysis}.
They argue that if $\alpha$ is small enough, then $g$ is a local diffeomorphism around critical points of $f$ (as in Lemma~\ref{lem:Dg-not-singular-critical-point}).
However, this is not sufficient to ensure that $g^{-1}$ preserves sets of measure zero globally, which hampers the local-to-global part of their argument.
See Section~\ref{sec:JacobiDgx} for the additional technical tools that enable the study of $\D g(x)$ at non-critical points.

\paragraph{Saddle avoidance in stochastic optimization.}
Classical results on saddle avoidance in stochastic methods go back to \citet{pemantle1990nonconvergence}, who showed that stochastic gradient methods (SGD) avoid hyperbolic unstable fixed points.
More recently, \citet{mertikopoulos2020almost} proved that SGD avoids strict saddle manifolds, with extensions to Riemannian settings by \citet{hsieh2024riemannian}.
While these results are asymptotic, non-asymptotic analyses show that GD may take linear time to escape a strict saddle if initialized with sufficient unstable component \citep{dixit2022boundary}, but can require exponentially many steps when initialized in certain regions of non-zero measure near the saddle \cite[\S3]{du2017gradient}.
In contrast, stochastic methods with isotropic noise can escape saddles and reach approximate local minima in polynomial time with high probability \citep{ge2015escaping, jin2017howtoescape, jin2018agdescapes}, including in the Riemannian setting via noise injected on the manifold or tangent space \citep{sun2019prgd, criscitiello2019escapingsaddles}.

\paragraph{Non-constant step sizes.} 
Several works study saddle avoidance for GD with non-constant step sizes, assuming an \textit{a priori} fixed sequence that is independent of the initial point $x_0$.
\cite{panageas2019first} show that GD (among others) with step sizes $\alpha_t$ decaying as $\Omega(1/t)$ avoids \emph{isolated} strict saddle points. 
\citet[Cor.~3.8]{schaeffer2020extending} consider GD with a pre-set sequence of piecewise constant step sizes. 
More specifically, the step size $\alpha_t$ at time $t$ is a piecewise constant function of $t$ with finitely many jumps.
Additionally, the result requires that $\nabla f$ is globally $L$-Lipschitz continuous and that $\alpha_t L \in (0, 2)$ for all times $t \geq 0$, which does not hold in practice for line-search methods.

Another result is presented by \citet[Thm.~7.2]{shi2020mathematical}, who show that GD avoids strict saddles when using a pre-set sequence of step sizes, assuming a globally Lipschitz Hessian and that all strict saddles are isolated.
The step sizes must be fixed in advance, which prevents the method from adapting to the global problem class.
In addition, even though they are predetermined, the step sizes must satisfy conditions that depend on the trajectory itself, specifically on the local Lipschitz constant of the gradient around the current iterate, which is not known a priori.
Therefore, the method is not adaptive to function class or local geometry.

There have been other attempts at establishing saddle avoidance for GD with line-search-type methods.
For example, \cite{truong2025some} proposes a line search that requires an explicit approximation of the local Lipschitz constant of the gradient at each step, which is not easily implementable.

\paragraph{Non-smooth settings.}
\cite{davis2022proximal} and \cite{davis2025active} study saddle avoidance properties of subgradient and proximal point methods for a non-smooth cost function $f$.
This notably requires redefining the notion of strict saddle.
(In fact, this is necessary even if $f$ is $C^1$, as then $f$ may not have a Hessian.)
One option is to consider critical points with a direction along which $f$ decreases quadratically.
However, this would be inadequate: there exist $C^1$ functions with a set $S$ of positive measure such that GD initialized in $S$ converges to such a point \cite[\S1.1]{davis2022proximal}.
In part for this reason, \citet{davis2022proximal} propose the concept of ``active strict saddle''.

\section{Unstable fixed point avoidance for dynamical systems} \label{section:ingredients}

En route to studying gradient descent, we first study general dynamical systems of the form $x_{t+1} = g(x_t)$ on a smooth manifold $\calM$.
One of the main products of this section is a proof of Theorem~\ref{thm:generalsystemintro}.
Let us provide a few definitions to clarify its statement.
\begin{definition}
	Let $g \colon \calM \to \calM$ be $C^1$.
	A point $x^\ast \in \calM$ is a \emph{fixed point} if $g(x^\ast) = x^\ast$.
	It is an \emph{unstable\footnote{This might also be called a \emph{strictly} unstable fixed point.} fixed point} if also $\D g(x^\ast)$ has at least one eigenvalue $\lambda$ with magnitude $|\lambda| > 1$.
\end{definition}
In the context of this paper, the strict saddles of a cost function $f$ would be unstable fixed points of an optimization algorithm for $f$ with iteration map $g$.

The Lebesgue measure of a set $A \subseteq \Rn$ is denoted by $\mu_n(A)$ or simply $\mu(A)$.
We say $A$ has \emph{measure zero} if $\mu(A) = 0$.
More generally, on manifolds, we use the following standard notion of measure zero sets~\cite[Ch.~6]{lee2012smoothmanifolds}.

\begin{definition}
	Let $\calM$ be a smooth manifold of dimension $n$.
	A subset $A \subseteq \calM$ has \emph{measure zero} in $\calM$ if, for every smooth chart 
	$(U, \varphi)$ of $\calM$, the subset $\varphi(A \cap U) \subseteq \Rn$ has Lebesgue measure zero.
	With some abuse of notation, we then write $\mu(A) = 0$.
\end{definition}
Conveniently, it suffices to check the condition above for a 
collection of charts whose domains cover $A$~\cite[Lem.~6.6]{lee2012smoothmanifolds}.

A dynamical system \emph{avoids unstable fixed points} if the set of initialization points $x_0$ that lead to convergence to an unstable fixed point has measure zero.
This is made explicit below with $S$ denoting the set of unstable fixed points.

\begin{definition} \label{def:systemavoids}
	The \emph{stable set} of a point $x^\ast \in \calM$ for $g \colon \calM \to \calM$ is the set
	\begin{equation*}
		W(x^\ast) = 
		\left\{ x_0 \in \calM \mid \lim_{t \to \infty} g^t(x_0) = x^\ast \right\}.
	\end{equation*}
	We say $g$ \emph{avoids} $x^\ast$ if $W(x^\ast)$ has measure zero.
	Moreover, $g$ \emph{avoids a subset} $S \subseteq \calM$ if $W(S) = \bigcup_{x^\ast \in S} W(x^\ast)$ has measure zero.
\end{definition}

Notice that if the system does not converge when initialized at some point $x_0$, then $x_0$ does not belong to any stable set.

\subsection{A general unstable fixed point avoidance theorem}

To establish a global unstable fixed point avoidance result, we require the iteration map $g$ not to ``collapse'' sets of positive measure to sets of measure zero.
Under that assumption, repeated preimages of a measure zero set also have measure zero, enabling the local-to-global part of the proof.
This condition appears in the literature as the \emph{Luzin $N^{-1}$ property} (also spelled \emph{Lusin})
\citep[Def.~4.12]{hencl2014lectures} or the \emph{0-property} \citep{ponomarev1987submersions}.

\begin{definition} \label{def:luzin}
	A continuous map $g \colon \calM \rightarrow \calM$ has the \emph{Luzin $N^{-1}$ property} if 
	\begin{align*}
			\textrm{for all } E \subseteq \calM, &&
	\mu(E) = 0 
	\quad \implies \quad
	\mu(g^{-1}(E)) = 0.
\tag{Luzin $N^{-1}$}
	\end{align*}
\end{definition}

With this condition, we can state the template theorem for unstable fixed point avoidance.
Subsequent saddle avoidance results rely on this theorem or on a variation of its proof. 

\begin{theorem} \label{thm:saddle-avoidance-template}
	If $g \colon \calM \to \calM$ is $C^1$ and it has the Luzin $N^{-1}$ property, then $g$ avoids its unstable fixed points.
\end{theorem}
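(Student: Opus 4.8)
The plan is to run the center-stable-manifold argument of \citet{lee2019strictsaddles}, but substituting the stronger Center Stable Manifold Theorem (CSMT) of \cite[Thm.~5.1]{hirsch1977invariant}, which requires no invertibility of the iteration map's differential anywhere. Write $S$ for the set of unstable fixed points of $g$; the goal is to show $\mu(W(S)) = 0$, where $W(S) = \bigcup_{x^\ast \in S} W(x^\ast)$.

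First I would establish the local picture. Fixing $x^\ast \in S$ and passing to a smooth chart centered there, $g$ reads as a $C^1$ self-map of an open subset of $\Rn$ fixing the origin, whose differential at the origin has an eigenvalue of modulus $>1$. Split $\Rn = \Ecs \oplus \Eu$ into the generalized eigenspaces for eigenvalues of modulus $\leq 1$ and $>1$ respectively; since $\dim \Eu \geq 1$, we get $\dim \Ecs \leq n-1$. The CSMT then provides an open neighborhood $B_{x^\ast}$ of $x^\ast$ and an embedded $C^1$ submanifold $\Wcsloc(x^\ast) \subseteq B_{x^\ast}$, tangent to $\Ecs$ at $x^\ast$ and of dimension $\dim \Ecs < n$, with the forward-trapping property: whenever $g^t(x) \in B_{x^\ast}$ for all $t \geq 0$, one has $x \in \Wcsloc(x^\ast)$. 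Being a $C^1$ submanifold of dimension strictly less than $n$, $\Wcsloc(x^\ast)$ has measure zero in $\calM$.

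Next I would globalize. Since $\calM$ is second countable, the cover $\{B_{x^\ast}\}_{x^\ast \in S}$ of $S$ admits a countable subcover $B_1, B_2, \dots$, with associated measure-zero sets $W_1, W_2, \dots$. If $x_0 \in W(x^\ast)$ for some $x^\ast \in S$, choose $k$ with $x^\ast \in B_k$; openness of $B_k$ together with $g^t(x_0) \to x^\ast$ yields a $T$ with $g^{s+T}(x_0) \in B_k$ for all $s \geq 0$, so the trapping property gives $g^T(x_0) \in W_k$, i.e. $x_0$ lies in the preimage of $W_k$ under $g^T$. Hence $W(S) \subseteq \bigcup_{k \geq 1} \bigcup_{T \geq 0} (g^T)^{-1}(W_k)$. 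Each $W_k$ is null, and, since $(g^T)^{-1}(W_k)$ is the $T$-fold iterated preimage of $W_k$ under $g$, applying the Luzin $N^{-1}$ property $T$ times shows $(g^T)^{-1}(W_k)$ is null; a countable union of null sets is null, so $\mu(W(S)) = 0$.

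The hard part will be the correct invocation of the CSMT in the non-invertible regime: one must verify that \cite[Thm.~5.1]{hirsch1977invariant}, applied in the chart, really does deliver a local center-stable manifold of the asserted strictly sub-maximal dimension together with the forward-trapping property, without any spectral-gap or invertibility hypothesis, and that the conclusion transfers back to $\calM$. The remaining ingredients — second countability of $\calM$, the fact that a lower-dimensional $C^1$ submanifold is null, and the stability of null sets under iterated preimages — are routine; the last is exactly what the Luzin $N^{-1}$ hypothesis provides, and is precisely why it is the right condition to make the local-to-global passage go through.
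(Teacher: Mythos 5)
Your proposal is correct and follows essentially the same route as the paper: invoke the Hirsch--Pugh--Shub version of the CSMT (which needs no invertibility of $\D g$) at each unstable fixed point to get a trapping neighborhood and a measure-zero local center-stable set, extract a countable subcover via second countability (Lindel\"of), and pull the null sets back through iterated preimages using the Luzin $N^{-1}$ property. The only cosmetic difference is that you derive the measure-zero property of $\Wcsloc$ from its sub-maximal dimension as a $C^1$ submanifold, whereas the paper simply records the measure-zero and trapping conclusions of the theorem and notes the extra regularity is not needed.
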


A few remarks are in order before we proceed to the proof.

\begin{remark} \label{remark:luzin-not-removable}
	The Luzin $N^{-1}$ condition in Theorem~\ref{thm:saddle-avoidance-template} cannot be omitted.
	We illustrate this with a counterexample of the form $g(x) = x - \alpha \nabla f(x)$, where $\alpha = 1$ and $f \colon \reals^2 \rightarrow \reals$ is designed below.

	The idea is for $f$ to behave like a squared norm far from the origin, while arranging for the origin itself to be a strict saddle.
	Explicitly,
	\begin{itemize}
		\item For $\|x\| < 1$, we set $f(x) = f_1(x) := (x_1^2 - x_2^2) / 2$, 
		so that the origin is a strict saddle.
		\item For $\|x\| > 2$, we set $f(x) = f_2(x) := (x_1^2 + x_2^2) / 2$. 
	\end{itemize}
	To smoothly interpolate between $f_1$ and $f_2$, we use a smooth transition function $q \colon \reals \to \reals$ defined by
	\begin{align*}
		q(t) = \frac{u(t)}{u(t) + u(3-t)}, && \textrm{where} && u(t) = \begin{cases}
		e^{-3/t} & \text{if } t > 0, \\
		0 & \text{otherwise}.
		\end{cases}
	\end{align*}
	It satisfies $q(t) = 0$ for $t \leq 0$ and $q(t) = 1$ for $t \geq 3$.
	With this, let $f(x)$ be defined for all $x$ as
	\begin{equation*}
		f(x) = q(4 - \|x\|^2) \; f_1(x) + \big(1 - q(4 - \|x\|^2) \big) \; f_2(x).
	\end{equation*}
	Notice that $f$ is smooth and satisfies the requirements above.
	Now, consider the map $g(x) = x - \nabla f(x)$.
	The origin is an unstable fixed point for $g$ since $g(0) = 0$ and $\D g(0)$ has eigenvalues $0$ and $2$.
	The origin is also a strict saddle for $f$.
	Moreover, for all $x \in \reals^2$ with $\|x\| > 2$, we have $g(x) = 0$ so that $g$ does not avoid its unstable fixed points, and indeed it does not have the Luzin $N^{-1}$ property.
\end{remark}

\begin{remark} \label{rem:Luzinisright}
	Theorem~\ref{thm:saddle-avoidance-template} generalizes existing fixed point avoidance
	results from the literature.
	For example, \citet{panageas2017gradient} require the map $g$ to be a global diffeomorphism,
	while \citet{lee2019strictsaddles} relax this assumption by requiring 
	$\D g(x)$ to be invertible for all $x$ in the domain.
	Both conditions are more restrictive than the Luzin $N^{-1}$ condition, as demonstrated below in Theorem~\ref{thm:luzin-property}.
	Still, the proof in Section~\ref{subsection:proof-general-thm} is the same as theirs: only the ingredients change.
\end{remark}

\begin{remark}
	One may also consider a continuous dynamical system $x'(t) = V(x(t))$ with a smooth vector field $V$ on $\calM$ such that the flow $\phi \colon \reals \times \calM \to \calM$ is defined for all initial conditions $x(0) = x_0$ and all positive times.
	The time-one map $\phi_1 \colon \calM \to \calM$ flows a given point for one unit of time, thus mapping $x(0)$ to $x(1)$.
	Since $\phi_1$ is a diffeomorphism, it has the Luzin~$N^{-1}$ property~\cite[Thm.~6.9, Thm.~9.12]{lee2012smoothmanifolds}.
	Then, we obtain an avoidance result for the continuous system from Theorem~\ref{thm:saddle-avoidance-template} with $g := \phi_1$.
	For the case of gradient flows, a similar result is established in~\cite[Lem.~A.1]{geshkovski2025mathtransformers}. 
\end{remark}

\subsection{Proof of the general theorem} \label{subsection:proof-general-thm}

As recalled in the introduction, the now standard proof of global avoidance of unstable fixed points relies on the Center Stable Manifold Theorem (CSMT).
The latter provides a local characterization of the set of initial conditions from which 
repeated iteration of $g$ converges to a fixed point.
Importantly, it ensures that if the iteration map has at least one expanding direction, 
then this set has measure zero.

The CSMT commonly used in the saddle avoidance literature is the one stated in \cite[Thm.~III.7]{shub1987book}.
It requires that the iteration map is a diffeomorphism in a neighborhood of the fixed point where it is applied.
However, using a different proof technique, the result still holds even if $g$ is not locally invertible: see \citep[Thm.~5.1]{hirsch1977invariant}.
Theorem~\ref{thm:csmt-no-local-diffeomorphism} below states the parts we need from this more general version of the CSMT.
(\citet[Ex.~III.2]{shub1987book} also includes it as an exercise.)

This generalized form makes it possible to treat cases where the iteration map is not necessarily a diffeomorphism, nor even a local diffeomorphism around fixed points.
Crucially, in the analysis of saddle avoidance for gradient descent, this enables us to remove previously required assumptions on the step size.
The ensuing analysis is also simpler.

\begin{theorem}[{\protect{\citet[Thm.~5.1]{hirsch1977invariant}}}]
	Let $x^\ast$ be a fixed point of the $C^1$ map $g \colon \calM \rightarrow \calM$. 
	Assume $\D g(x^\ast)$ has at least one eigenvalue $\lambda$ with magnitude $|\lambda| > 1$.
	Then, there exist 
	\begin{itemize}
		\item an open neighborhood $B$ of $x^\ast$, and
		\item a measure zero set $\Wcsloc$ in $\calM$
	\end{itemize}
	such that if $\{ x_t \}_{t \geq 0}$ is a sequence generated by $x_{t+1} = g(x_t)$ and $x_t$ is in $B$ for all $t \geq 0$, then $x_t$ is also in $\Wcsloc$ for all $t \geq 0$.
\label{thm:csmt-no-local-diffeomorphism}
\end{theorem}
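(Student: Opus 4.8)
The plan is to reduce to a coordinate chart and then run the classical Lyapunov--Perron construction of a local center-stable manifold, taking care that the only inversion ever used is that of the linear part of $\D g(x^\ast)$ on its expanding subspace (which is automatic), never of $g$ itself. This is what makes the statement hold without $g$ being a (local) diffeomorphism.

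\textbf{Setup.} Fix a smooth chart sending $x^\ast$ to the origin; there is no loss in writing $g \colon U \to \Rn$ with $U$ a neighbourhood of $0$, $g(0)=0$, and $A := \D g(0)$. Let $\Eu$ be the sum of the generalized eigenspaces of $A$ for eigenvalues of modulus $>1$ and $\Ecs$ the sum of those for eigenvalues of modulus $\le 1$, so $\Rn=\Ecs\oplus\Eu$ is $A$-invariant and, by hypothesis, $\Eu\neq\{0\}$; note $A|_{\Eu}$ is invertible since all its eigenvalues are nonzero. Equip $\Rn$ with an adapted (Lyapunov) norm for which $\|A|_{\Ecs}\|\le\rho$ and $\|(A|_{\Eu})^{-1}\|\le 1/\sigma$ with $\rho<\sigma$ and $\sigma>1$; this is possible because the spectral radius of $A|_{\Ecs}$ is $\le 1$. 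Write $g(x)=Ax+h(x)$ with $h(0)=0$, $\D h(0)=0$; since $g$ is $C^1$, after shrinking $U$ to a small ball $B_\delta$ the map $h$ has Lipschitz constant $\varepsilon$ on $B_\delta$ with $\varepsilon$ as small as we like. In coordinates $x=(y,z)\in\Ecs\times\Eu$, the orbit recursion $x_{t+1}=g(x_t)$ reads $y_{t+1}=A_{cs}y_t+h_{cs}(x_t)$ and $z_{t+1}=A_uz_t+h_u(x_t)$, where $A_{cs}=A|_{\Ecs}$, $A_u=A|_{\Eu}$.

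\textbf{Lyapunov--Perron operator.} Solve the $y$-equation forward from a prescribed initial value $y_0=\xi$, and the $z$-equation backward using invertibility of $A_u$: from $z_{t+1}=A_uz_t+h_u(x_t)$ one gets $z_t=A_u^{-N}z_{t+N}-\sum_{s=0}^{N-1}A_u^{-(s+1)}h_u(x_{t+s})$, and for a bounded orbit the tail term vanishes as $N\to\infty$ since $\|A_u^{-N}\|\le C\sigma^{-N}$ with $\sigma>1$ while $z_{t+N}$ stays bounded, yielding $z_t=-\sum_{s\ge0}A_u^{-(s+1)}h_u(x_{t+s})$. This motivates the fixed-point operator $T_\xi$ on sequences $x=(x_t)_{t\ge0}$ given by $(T_\xi x)_t=\big(A_{cs}^t\xi+\sum_{s<t}A_{cs}^{t-1-s}h_{cs}(x_s)\,,\ -\sum_{s\ge0}A_u^{-(s+1)}h_u(x_{t+s})\big)$. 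Working in a weighted sup-norm space $X_\theta=\{x:\sup_t\theta^{-t}\|x_t\|<\infty\}$ with $\theta$ in the spectral gap $\max(1,\rho)<\theta<\sigma$, the weight is chosen precisely so that both sums converge and $T_\xi$ maps a ball of radius $O(\delta)$ in $X_\theta$ into itself and contracts there, provided $\|\xi\|$ and $\varepsilon$ are small enough. The Banach fixed-point theorem then gives, for each small $\xi\in\Ecs$, a unique such sequence $x(\xi)$; its zeroth entry has $\Ecs$-component $\xi$ and $\Eu$-component some $\psi(\xi)\in\Eu$, with $\psi$ Lipschitz and $\psi(0)=0$. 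After shrinking $B_\delta$ once more to a ball $B$ (small enough that the $\Ecs$-components of $B$-orbits stay in the domain of $\psi$), set $\Wcsloc$ to be the preimage in $\calM$, under the chart, of $\graph(\psi)=\{(\xi,\psi(\xi))\}$.

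\textbf{Conclusion.} First, $\Wcsloc$ has measure zero: $\graph(\psi)$ is a Lipschitz graph over an open subset of $\Ecs$, and $\dim\Ecs=n-\dim\Eu\le n-1$, so $\graph(\psi)$ has Lebesgue measure zero in $\Rn$, which the chart (a diffeomorphism) preserves. Second, the trapping property: suppose $(x_t)_{t\ge0}$ is an orbit with $x_t\in B$ for all $t\ge0$. For each fixed $t$, the tail $(x_{t+s})_{s\ge0}$ is a bounded orbit, hence lies in $X_\theta$; it satisfies the forward $y$-recursion by definition, and by the backward-solving identity above it also satisfies the $z$-formula, so it equals the unique fixed point $x(\xi_t)$ of $T_{\xi_t}$ with $\xi_t$ the $\Ecs$-component of $x_t$. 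Reading off the zeroth entry gives $x_t=(\xi_t,\psi(\xi_t))\in\graph(\psi)$, i.e.\ $x_t\in\Wcsloc$, for every $t\ge0$, as required.

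\textbf{Main obstacle.} The delicate part is the fixed-point step: selecting the adapted norm and the weight $\theta$ so that the two infinite sums converge uniformly, and verifying the self-mapping and contraction estimates with only the $C^1$ regularity of $g$ (which dictates the order ``first shrink $U$ to make $\varepsilon$ small, then cut down the domain of $\psi$ and the ball $B$''). Crucially, nothing here requires $g$ or any $\D g(x)$ to be invertible; all that is inverted is $A_u=\D g(x^\ast)|_{\Eu}$, which is invertible for free because its eigenvalues have modulus $>1$, hence are nonzero. That is exactly the improvement over the diffeomorphism-based center-stable manifold theorem. One may alternatively just cite \citep[Thm.~5.1]{hirsch1977invariant} or \citep[Ex.~III.2]{shub1987book}, where this construction is carried out in detail.
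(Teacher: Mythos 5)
The paper does not prove this statement: it is imported verbatim from \citet[Thm.~5.1]{hirsch1977invariant} (with a pointer to \citet[Ex.~III.2]{shub1987book}), and only the parts needed later --- the neighborhood $B$, the measure-zero trapping set $\Wcsloc$ --- are extracted. Your proposal instead reconstructs the result via the Lyapunov--Perron method, and the outline is sound: you correctly identify that the only inverse ever needed is $(A|_{\Eu})^{-1}$, which exists because the unstable eigenvalues are nonzero, and this is precisely why the statement survives without $g$ being a local diffeomorphism (the point the paper emphasizes when preferring this version of the CSMT over \citet[Thm.~III.7]{shub1987book}). The splitting $\Rn=\Ecs\oplus\Eu$, the adapted norm with $\rho<\theta<\sigma$, the forward/backward variation-of-constants formulas, the measure-zero claim for a Lipschitz graph over $\Ecs$ with $\dim\Ecs\le n-1$, and the trapping argument (each tail of a $B$-confined orbit is a bounded fixed point of $T_{\xi_t}$, hence \emph{the} fixed point) are all correct.

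There is one step that fails as literally written, and it is more than bookkeeping. You run the contraction for $T_\xi$ on a ball of radius $O(\delta)$ in the weighted space $X_\theta$ with $\theta>1$ (the weight must exceed $1$ because $\Ecs$ may carry eigenvalues on the unit circle, so center-stable orbits need not stay bounded). But a sequence with $\sup_t\theta^{-t}\|x_t\|\le C\delta$ satisfies only $\|x_t\|\le C\delta\,\theta^t$, which leaves $B_\delta$ for large $t$; there the nonlinearity $h$ is no longer $\varepsilon$-Lipschitz (and, on a manifold read through a chart, need not even be defined). The standard repair is to first replace $h$ by a cutoff $\tilde h(x)=h(x)\chi(\|x\|/\delta)$ that is globally $O(\varepsilon)$-Lipschitz and agrees with $h$ on $B_{\delta/2}$, build $\psi$ and $\graph(\psi)$ for the modified map $\tilde g=A+\tilde h$, and then observe that any orbit of $g$ confined to $B\subseteq B_{\delta/2}$ is also an orbit of $\tilde g$, so the trapping conclusion transfers. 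Without this extension step, both the self-mapping and the contraction estimates for $T_\xi$ are unjustified. With it, your argument goes through; alternatively, as you note, one can simply cite \citet[Thm.~5.1]{hirsch1977invariant}, which is exactly what the paper does.
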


It is worth noting that the full CSMT provides additional information we do not need, such as the regularity of $\Wcsloc$ near $x^\ast$.
Moreover, it should be emphasized that the theorem is informative even for trajectories that converge to a point other than $x^\ast$ yet within the neighborhood $B$.
This is particularly useful in cases where unstable fixed points are non-isolated, as first noted by \citet{panageas2016gradient}.

\begin{proof} [Proof of Theorem~\ref{thm:saddle-avoidance-template}]
The proof is the same as in \citep[Thm.~2]{lee2019strictsaddles}:
the (better) CSMT and Lindel\"{o}f's lemma are the central ingredients.

Assume that $x^\ast$ is an unstable fixed point of $g$.
Applying Theorem~\ref{thm:csmt-no-local-diffeomorphism} at $x^\ast$, we obtain a neighborhood $B_{x^\ast}$ of $x^\ast$ and a local center stable set $\Wcsloc(x^\ast)$ of measure zero such that
\begin{equation} \label{eq:csmt-conclusion}
	g^t(x) \in B_{x^\ast} \textrm{ for all } t \geq 0 \quad \implies \quad x \in \Wcsloc(x^\ast).
\end{equation}
Let $S$ denote the set of all unstable fixed points of $g$.
Apply Theorem~\ref{thm:csmt-no-local-diffeomorphism} at each $x^\ast \in S$.
The sets $ \{ B_{x^\ast} \}_{x^\ast \in S}$ form a (potentially uncountable) cover for $S$. 
Since $\calM$ is second countable, Lindel\"{o}f's lemma allows us to extract a countable subcover of $S$.
That is, there exists a countable subset $\{ x_i^\ast \}_{i \geq 0}$ 
of $S$ such that $\{ B_{x_i^\ast} \}_{i \geq 0}$ covers $S$.

Now, consider $x_0 \in \calM$ such that $\lim_{t \to \infty} g^t(x_0) = x^\ast \in S$
and choose one of the balls $B_{x_i^\ast}$ in the countable subcover such that $x^\ast \in B_{x_i^\ast}$.
Then, since the iterates converge to $x^\ast$, they must eventually enter $B_{x_i^\ast}$ and never 
exit it again. 
Let $m \geq 0$ be the index when this happens.
That is, we have that $g^t(x_0)$ is in $B_{x_i^\ast}$ for all $t \geq m$. 
From~\eqref{eq:csmt-conclusion}, we deduce that $g^m(x_0)$ is in $\Wcsloc(x_i^\ast)$, and so $x_0$ is in $g^{-m}(\Wcsloc(x_i^\ast))$. 
Therefore, the set of all initial points $x_0$ which could lead to convergence to an unstable fixed point is included in the set 
\begin{equation*}
	W = \bigcup_{i \geq 0} \bigcup_{m \geq 0} g^{-m} (\Wcsloc(x_i^\ast)).
\end{equation*}
Since $\Wcsloc(x_i^\ast)$ has measure zero and since $g$ satisfies the Luzin 
$N^{-1}$ property, each set $g^{-m} (\Wcsloc(x_i^\ast))$ has measure zero.
Thus, $W$ has measure zero as the union of countably many measure zero sets.
\end{proof}

\subsection{Verifying the Luzin $N^{-1}$ property}

As stated in Definition \ref{def:luzin}, the Luzin $N^{-1}$ property might seem hard to check.
Fortunately, when $g$ is continuously differentiable, the following theorem by \citet{ponomarev1987submersions} provides an alternative characterization that may be more convenient.

\begin{theorem}[\citet{ponomarev1987submersions}] \label{thm:luzin-property}
	If $g \colon \calM \rightarrow \calM$ is $C^1$, then the following are equivalent:
	\begin{itemize}
		\item $g$ has the Luzin $N^{-1}$ property;
		\item $\D g(x)$ is invertible for almost all $x \in \calM$.
	\end{itemize}
\end{theorem}
For completeness, we provide a proof of Theorem~\ref{thm:luzin-property} in Appendix~\ref{appendix:proofs-luzin}.
The proof is adapted from~\citep{ponomarev1987submersions} and relies on Lindel\"{o}f's lemma and Sard's theorem as the key technical ingredients.
Notice that Theorem~\ref{thm:generalsystemintro} is a corollary of Theorems~\ref{thm:saddle-avoidance-template} and~\ref{thm:luzin-property}.

\section{Improved guarantees for a constant step size}
\label{section:constantstepsizeGD}

Recall the notions of strict saddle and strict saddle avoidance from the introduction (Definition~\ref{def:strictsaddlesandavoidance}).
In this section, we use Theorem~\ref{thm:saddle-avoidance-template} to obtain new strict saddle avoidance results for gradient descent with constant step size.

\subsection{The Euclidean case}

For a cost function $f \colon \Rn \to \reals$, the iteration map of
gradient descent (GD) with constant step size $\alpha$ is $g_\alpha(x) = x - \alpha \nabla f(x)$.
The main result of this section is stated below.

\begin{theorem} \label{thm:euclidean-gd-result}
	Let $f \colon \Rn \to \reals$ be $C^2$.
	For almost all $\alpha > 0$, gradient descent with step size $\alpha$ avoids the strict saddles of $f$.
\end{theorem}

It is straightforward to see that strict saddles are unstable, regardless of the step size used.
Indeed, the differential of the iteration map $g_\alpha$ is $\D g_\alpha(x) = I_n - \alpha \nabla^2 f(x)$ for all $x$.
At a strict saddle, $\nabla^2 f(x)$ has at least one negative eigenvalue, so for any step size $\alpha > 0$, $\D g_\alpha(x)$ has at least one eigenvalue larger than $1$.

To apply Theorem~\ref{thm:saddle-avoidance-template}, we need to show that the iteration map satisfies the Luzin $N^{-1}$ property.
We make a few remarks, then dedicate the rest of this section to showing that, in the Euclidean case, this is indeed true for almost any step size, with no additional assumptions.

\begin{remark}
	Notice that Theorem~\ref{thm:euclidean-gd-result} applies even if the function value is not monotonically decreasing along the trajectory.
	In contrast, previous works assume that the step size satisfies $\alpha < 1/L$ or $\alpha < 2/L$, where $L$ is the Lipschitz constant of $\nabla f$, and hence $f(x_{t+1}) \leq f(x_t)$.
\end{remark}

\begin{remark} \label{rem:schaefferassumption}
	\cite{schaeffer2020extending} extended the usual step size condition from $\alpha < 1/L$ to 
	$\alpha < 2/L$.
	However, to do so, they introduced the assumption that the set 
	\begin{equation*}
		\Sigma_g = \bigcup_{i \in \{1, \dots, n\} } \{ x \in \Rn \mid 1 - \alpha \lambda_i(\nabla^2 f(x)) = 0 \}
	\end{equation*}
	has measure zero in $\Rn$ and does not contain strict saddles, with $\lambda_i(A)$ denoting the $i$th eigenvalue of $A$.
	The assumption that $\Sigma_g$ has measure zero implies the Luzin $N^{-1}$ condition.
	However, as stated, it is not immediately clear how to verify the assumption in practice.
	Additionally, their result does not apply if $\D g(x^\ast)$ is singular at some strict saddle.
	Instead, we show that for a $C^2$ cost and almost any step size, the corresponding iteration map 
	automatically satisfies the Luzin $N^{-1}$ property (Lemma~\ref{lemma:gd-luzin-almost-any-alpha}). 
	Additionally, we show that the condition that $\Sigma_g$ does not contain any strict saddle
	is unnecessary owing to the stronger version of the CSMT stated in Theorem~\ref{thm:csmt-no-local-diffeomorphism}.
\end{remark}

\begin{remark}
	If $\lim_{t \to \infty} g^t(x_0)$ does not exist, then $x_0$ is not included in the stable set of any fixed point of $g$.
	In general, to ensure that the iterates of an optimization algorithm converge (to some critical
	point of the objective), both the function and the optimization algorithm must satisfy additional 
	assumptions \citep{absil2005iteratesanalytic}.
	There, the Lipschitz condition on $\nabla f$ and associated step size restriction can help.
	See also Theorem~\ref{thm:soc-convergence}.
\end{remark}

\begin{remark} \label{remark:euclidean-gd-counterexample}
	Theorem~\ref{thm:euclidean-gd-result} holds for \emph{almost all} positive $\alpha$, but indeed not for all.
	Consider again the function $f$ from Remark~\ref{remark:luzin-not-removable}.
	If $\alpha = 1$, then for all points $x \in \reals^2$ with $\|x\| > 2$, 
	we have $g(x) = 0$.
	In this case, the iteration map $g$ maps a set of positive measure
	directly to a strict saddle, showing that GD with step size $\alpha = 1$ indeed does not avoid the strict saddles of $f$.
	Theorem~\ref{thm:euclidean-gd-result} implies that the issue can be resolved by	perturbing $\alpha$.
\end{remark}

The main idea of the proof is to analyze the set $S$ of tuples $(x, \alpha)$ for which the differential
of the corresponding iteration map $\D g_\alpha(x)$ is singular.
Using a Fubini--Tonelli-type argument that we make precise in Lemma~\ref{lemma:vertical-horizontal-slices},
we show that $S$ has measure zero if and only if almost all of its slices---obtained by fixing either $x$ or 
$\alpha$--- have measure zero.
In the case of gradient descent, we then show in Lemma~\ref{lemma:gd-luzin-almost-any-alpha} that for
fixed $x$, the set of $\alpha$ that make $\D g_\alpha(x)$ singular is in fact finite.
As a consequence, for almost any fixed $\alpha$, the set of $x$ that make $\D g_\alpha(x)$ singular 
has measure zero.

\begin{lemma} \label{lemma:vertical-horizontal-slices}
	Let $S$ be a measurable subset in $\Rn \times \Rm$.
	The following statements are equivalent:
	\begin{enumerate}
		\item $S$ has measure zero in $\Rn \times \Rm$. 
		\item For almost all $x \in \Rn$, the slice $S^x = \{y \in \Rm \mid (x, y) \in S \}$
		has measure zero in $\Rm$. 
		\item For almost all $y \in \Rm$, the slice $S_y = \{ x \in \Rn \mid (x, y) \in S \}$
		has measure zero in $\reals^{n}$.
	\end{enumerate}
\end{lemma}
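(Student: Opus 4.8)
The plan is to obtain all three equivalences from a single application of Tonelli's theorem to the indicator function $\mathbf{1}_S$, using the standard fact that Lebesgue measure on $\Rn \times \Rm \cong \reals^{n+m}$ coincides with the completion of the product of Lebesgue measure on $\Rn$ with Lebesgue measure on $\Rm$. Since $\mathbf{1}_S$ is nonnegative and measurable, no integrability hypothesis is needed, so Tonelli's theorem (in the form valid for complete product measures) applies directly.

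Concretely, I would first record the consequence of Tonelli's theorem for the measurable set $S$: for almost every $x \in \Rn$ the slice $S^x$ is Lebesgue measurable in $\Rm$; the function $x \mapsto \mu_m(S^x)$ is nonnegative and measurable (defined for almost every $x$); and
\[
	\mu_{n+m}(S) \;=\; \int_{\Rn} \mu_m(S^x)\,\dmu_n(x).
\]
Then $(1) \Leftrightarrow (2)$ is immediate, since the integral of a nonnegative measurable function vanishes if and only if the integrand vanishes almost everywhere; that is, $\mu_{n+m}(S) = 0$ if and only if $\mu_m(S^x) = 0$ for almost every $x \in \Rn$. Running the same argument with the two factors interchanged (equivalently, applying it to the image of $S$ under the measure-preserving swap $(x,y) \mapsto (y,x)$) gives $\mu_{n+m}(S) = \int_{\Rm} \mu_n(S_y)\,\dmu_m(y)$, hence $(1) \Leftrightarrow (3)$. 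Together these yield the claimed equivalence of all three statements.

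The only delicate point---and the step I would treat with care---is measure-theoretic: when $S$ is Lebesgue measurable but not Borel, not every slice $S^x$ need be measurable, only almost every one, and the slice-measure function is only defined almost everywhere. I would handle this by citing the Fubini--Tonelli theorem for the completion of a product measure, so that these ``almost every'' qualifiers are already part of the statement, rather than the Borel version. An alternative that sidesteps completeness is to sandwich $S$ between a Borel set $B \supseteq S$ and a $\mu_{n+m}$-null set $N$ with $B \setminus S \subseteq N$, apply the Borel form of Tonelli to $B$ and to $N$ separately, and combine; but this is unnecessary if one simply invokes the complete-measure statement. Everything else in the proof is routine bookkeeping.
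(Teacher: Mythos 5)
Your proof is correct and follows essentially the same route as the paper: apply Tonelli's theorem to the indicator $1_S$, obtain $\mu_{n+m}(S) = \int_{\Rn} \mu_m(S^x)\,\mathrm{d}x$, conclude that the integral vanishes iff the integrand vanishes almost everywhere, and invoke symmetry for the third item. Your extra care about slices of Lebesgue-measurable (non-Borel) sets being measurable only for almost every $x$ is a welcome refinement of the same argument, not a different approach.
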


For completeness, we include a proof in Appendix~\ref{appendix:proofs-measure-zero}.
With this in hand, we now argue that the iteration map of GD satisfies the Luzin $N^{-1}$
property for almost any step size.

\begin{lemma} \label{lemma:gd-luzin-almost-any-alpha}
	Let $f \colon \Rn \to \reals$ be $C^2$.
	For almost all $\alpha \in \reals$, the iteration map $g_\alpha(x) = x - \alpha \nabla f(x)$ 
	satisfies the Luzin $N^{-1}$ property.
\end{lemma}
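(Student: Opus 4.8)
The plan is to combine the Fubini--Tonelli-type slicing lemma (Lemma~\ref{lemma:vertical-horizontal-slices}) with Theorem~\ref{thm:luzin-property}, which reduces the Luzin $N^{-1}$ property of $g_\alpha$ to the statement that $\D g_\alpha(x) = I_n - \alpha \nabla^2 f(x)$ is invertible for almost all $x \in \Rn$. First I would introduce the ``bad set''
\begin{equation*}
	S = \left\{ (x, \alpha) \in \Rn \times \reals \mid \det\!\big(I_n - \alpha \nabla^2 f(x)\big) = 0 \right\},
\end{equation*}
which is measurable since $(x,\alpha) \mapsto \det(I_n - \alpha \nabla^2 f(x))$ is continuous ($f$ being $C^2$). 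For this set, the vertical slice over a fixed $x$ is $S^x = \{\alpha \in \reals \mid \det(I_n - \alpha \nabla^2 f(x)) = 0\}$, i.e. the set of $\alpha$ such that $1/\alpha$ is an eigenvalue of $\nabla^2 f(x)$ (together with $\alpha = 0$ only if $I_n$ is singular, which it is not). Since $\nabla^2 f(x)$ is an $n \times n$ matrix, it has at most $n$ distinct nonzero eigenvalues, so $S^x$ has at most $n$ elements---in particular it is finite, hence has measure zero in $\reals$. By Lemma~\ref{lemma:vertical-horizontal-slices} (equivalence of items 1 and 2), this forces $\mu_{n+1}(S) = 0$, and then (equivalence of items 1 and 3) the horizontal slice $S_\alpha = \{x \in \Rn \mid \det(I_n - \alpha \nabla^2 f(x)) = 0\}$ has measure zero in $\Rn$ for almost all $\alpha \in \reals$.

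For such $\alpha$, $\D g_\alpha(x)$ is invertible for all $x \notin S_\alpha$, i.e. for almost all $x$, so Theorem~\ref{thm:luzin-property} yields that $g_\alpha$ has the Luzin $N^{-1}$ property, which is exactly the claim. One minor bookkeeping point is that Lemma~\ref{lemma:vertical-horizontal-slices} is stated for a measurable subset of $\Rn \times \Rm$; here $m = 1$, so no difficulty arises, but I would note explicitly that $S$ is even closed (preimage of $\{0\}$ under a continuous map), hence certainly measurable.

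I do not anticipate a serious obstacle; the argument is essentially a clean application of the two cited lemmas. The only point requiring a little care is the direction of the slicing argument: it is crucial that the \emph{easy} slices to control are the vertical ones (fixing $x$, varying $\alpha$), where finiteness is immediate from the eigenvalue count, and that Lemma~\ref{lemma:vertical-horizontal-slices} then transfers this to almost-all horizontal slices (fixing $\alpha$, varying $x$), which is what the Luzin property needs. One should also remark that this gives ``almost all $\alpha \in \reals$'' and hence in particular ``almost all $\alpha > 0$'', matching the hypothesis needed for Corollary~\ref{thm:euclidean-gd-result}; and that genericity of $\alpha$ is genuinely used---Remark~\ref{remark:euclidean-gd-counterexample} shows $\alpha$ cannot be arbitrary.
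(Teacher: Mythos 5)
Your proposal is correct and follows essentially the same route as the paper: same bad set $S$, same slicing via Lemma~\ref{lemma:vertical-horizontal-slices}, and the same conclusion via Theorem~\ref{thm:luzin-property}. The only cosmetic difference is that you argue finiteness of the vertical slices $S^x$ by counting eigenvalues of $\nabla^2 f(x)$, whereas the paper notes that $\alpha \mapsto \det(I_n - \alpha \nabla^2 f(x))$ is a nonzero polynomial of degree at most $n$ (nonzero because it equals $1$ at $\alpha = 0$)---the two observations are equivalent.
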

\begin{proof}
	Define the set of points $x$ and step sizes $\alpha$ such that $\D g_\alpha(x)$ is singular, that is,
	\begin{equation}
		S = \{ (x, \alpha) \in \Rn \times \reals \mid \det \D g_{\alpha}(x) = 0 \}.
	\end{equation}
	Since $S$ is a level set of a continuous function, it is closed; so it is measurable.
	Let us argue that the vertical slices $S^x$ of $S$ have measure zero in $\reals$.
	To this end, observe that
	\begin{equation*}
		\alpha \mapsto \det \D g_{\alpha}(x) = \det (I_n - \alpha \nabla^2 f(x))
	\end{equation*}
	is a polynomial in $\alpha$ of degree at most $n$.
	Also, it is not identically zero since $\det \D g_0(x) = 1$.
	Therefore, the set 
	\begin{equation*}
		S^x = \{ \alpha \in \reals \mid \det \D g_\alpha(x) = 0 \}
	\end{equation*}
	is finite, and in particular $\mu_1(S^x) = 0$ for every $x$.

	By Lemma~\ref{lemma:vertical-horizontal-slices}, we obtain that for almost all $\alpha \in \reals$,
	the slices of $S$ in the other direction also have measure zero, that is, 
	\begin{equation}
		\mu_n (S_\alpha) = 
		\mu_n \Big( \{ x \in \Rn \mid \D g_\alpha(x) \text{ is singular} \}\Big) = 0.
	\end{equation}
	Using Theorem~\ref{thm:luzin-property}, it follows that, for almost all $\alpha$, the iteration map has
	the Luzin $N^{-1}$ property, as announced.
\end{proof}

\subsection{The Riemannian case: general setup} \label{section:RGDgeneralsetup}

Let $f \colon \calM \to \reals$ be a $C^2$ cost function to be minimized on a Riemannian manifold $\calM$ equipped with a retraction $\Retr$.
Denote by $\grad f(x)$ the Riemannian gradient of $f$ at $x$.
Then, for a step size $\alpha > 0$, the Riemannian gradient descent (RGD) iteration map is given by
\begin{align} \label{eq:RGD} \tag{RGD}
	g \colon \calM \to \calM, && g(x) = \Retr_x(-\alpha \grad f(x)).
\end{align}
In this section, we verify that the strict saddle points of $f$ are unstable fixed points of $g$.
Strict saddle avoidance then follows as a corollary of Theorem~\ref{thm:saddle-avoidance-template} upon verifying the Luzin $N^{-1}$ property for $g$.
We do so under various assumptions in Sections~\ref{section:analytic}, \ref{section:additional-results-c2} and~\ref{section:metric-projection-retraction}.

A critical point $x^*$ of $f$ satisfies $\grad f(x^*) = 0$, and is therefore a fixed point for $g$.
It is a strict saddle if the Riemannian Hessian $\Hess f(x^*)$ has at least one negative eigenvalue.
To show that $x^*$ is then an unstable fixed point of $g$, we must show $\D g(x^*)$ has at least one eigenvalue with magnitude larger than $1$.
This follows easily after computing the differential of $g$ at critical points, as done now.
The expression is analogous to the Euclidean one (Lemma~\ref{lemma:Dgx-critical}).
It also appears in \cite[Prop.~8]{lee2019strictsaddles} and \cite[Thm.~2.9]{hou2020analysis},
though restricted to the metric projection retraction.

\begin{lemma} \label{lemma:Dgx-critical}
	If $f \colon \calM \to \reals$ is $C^2$ and $x^\ast$ is a critical point of $f$, then $\D g(x^\ast) = I - \alpha \Hess f(x^\ast)$.
	In particular, strict saddles of $f$ are unstable fixed points of $g$~\eqref{eq:RGD} for all $\alpha > 0$.
\end{lemma}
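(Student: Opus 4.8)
The plan is to compute $\D g(x^\ast)$ directly from the definition of a retraction, using the fact that $x^\ast$ is a critical point. Recall $g(x) = \Retr_x(-\alpha \grad f(x))$, so $g$ is the composition of the map $x \mapsto (x, -\alpha \grad f(x))$ into the tangent bundle $\T\calM$, followed by the retraction $\Retr \colon \T\calM \to \calM$. Differentiating at a critical point is much simpler than at a general point because $\grad f(x^\ast) = 0$, so the ``base point'' component of the differential interacts with the retraction only through its behavior at the zero section.

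First I would recall the standard fact that the differential of a retraction $\Retr$ at a zero tangent vector $(x, 0_x)$ is the identity in the following sense: for the canonical splitting $\T_{(x,0_x)}\T\calM \cong \T_x\calM \oplus \T_x\calM$ (horizontal $\oplus$ vertical), one has $\D\Retr(x, 0_x)[\dot x, \dot v] = \dot x + \dot v$. The horizontal part gives $\dot x$ because $\Retr_x(0) = x$ for all $x$, and the vertical part gives $\dot v$ because $\frac{d}{dt}\Retr_x(tv)\big|_{t=0} = v$ is exactly the defining local rigidity condition of a retraction. This is Proposition 5.4.1-ish material in Absil--Mahony--Sepulchre or the analogous statement in Boumal's book; I would cite it rather than reprove it.

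Then the chain rule gives, for $u \in \T_{x^\ast}\calM$,
\begin{equation*}
	\D g(x^\ast)[u] = \D\Retr(x^\ast, 0)\big[u,\; -\alpha \,\D(\grad f)(x^\ast)[u]\big] = u - \alpha\, \D(\grad f)(x^\ast)[u].
\end{equation*}
Here I am using that $-\alpha\grad f(x^\ast) = 0$, so the point at which $\D\Retr$ is evaluated is indeed the zero section. Finally, at a critical point the Riemannian Hessian satisfies $\Hess f(x^\ast) = \D(\grad f)(x^\ast)$ as an endomorphism of $\T_{x^\ast}\calM$ (the connection-dependent correction term vanishes because $\grad f(x^\ast) = 0$, which is the standard reason the Hessian is well-defined independently of the connection at critical points). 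This yields $\D g(x^\ast) = I - \alpha \Hess f(x^\ast)$.

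For the ``in particular'' clause: if $x^\ast$ is a strict saddle then $\Hess f(x^\ast)$ is a self-adjoint operator with a negative eigenvalue $\lambda_{\min} < 0$, hence $I - \alpha \Hess f(x^\ast)$ has eigenvalue $1 - \alpha\lambda_{\min} > 1$ for every $\alpha > 0$, so $\D g(x^\ast)$ has an eigenvalue of magnitude strictly larger than $1$ and $x^\ast$ is an unstable fixed point. The main (minor) obstacle is being careful about the identification of $\T_{(x^\ast,0)}\T\calM$ with $\T_{x^\ast}\calM \oplus \T_{x^\ast}\calM$ and making sure the chain rule is applied to the correct composite map; once the differential-of-retraction-at-zero lemma is cited cleanly, the rest is a one-line computation. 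I would point the reader to the Euclidean special case ($\Retr_x(v) = x+v$, where $\D g_\alpha(x) = I - \alpha\nabla^2 f(x)$ holds at every $x$, not just critical points) as a sanity check, and note that the restriction to critical points is exactly what lets us avoid differentiating the retraction away from the zero section.
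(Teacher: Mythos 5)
Your proposal is correct and follows essentially the same route as the paper: chain rule through the map $x \mapsto (x, -\alpha\grad f(x))$, the fact that $\D\Retr(x,0)[(u,v)] = u + v$ (the paper cites \citep[Lem.~4.21]{boumal2020intromanifolds}), and the identification of $\D(\grad f)(x^\ast)$ with $\Hess f(x^\ast)$ at a critical point because the connection term vanishes there. Your explicit treatment of the ``in particular'' clause matches the eigenvalue argument the paper gives in Corollary~\ref{cor:rgd-template-theorem}, so there is nothing to add.
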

\begin{proof}
	The retraction $\Retr \colon \T\calM \to \calM \colon (x, u) \mapsto \Retr(x, u) = \Retr_x(u)$ is arbitrary.
	Given $(x, u) \in \T\calM$, the chain rule provides 
	\begin{equation*} 
		\D g(x)[u] = 
		\D \Retr(x, - \alpha \grad f(x)) [(u, - \alpha \D \grad f(x) [u])]. 
	\end{equation*}
	The differential of $\Retr$ at $(x, 0) \in \T\calM$ is $\D \Retr(x, 0)[(u, v)] = u + v$ for all $u, v \in \T_x \calM$
	\citep[Lem.~4.21]{boumal2020intromanifolds}.
	Therefore, for all $u \in \T_{x^\ast}\calM$ we have
	\begin{equation*}
		\D g(x^\ast)[u] = 
		u - \alpha \D \grad f(x^\ast)[u] = 
		u - \alpha \nabla_u \grad f = 
		(I - \alpha \Hess f(x^\ast))[u],
	\end{equation*}
	where in the second step we used the fact that the Riemannian connection $\nabla$ applied at a critical point of a vector field reduces to the usual differential of that vector field seen as a smooth map~\cite[Prop.~5.3]{boumal2020intromanifolds}.
\end{proof}

\begin{corollary} \label{cor:rgd-template-theorem}
	Let $f \colon \calM \to \reals$ be $C^2$.
	If the \eqref{eq:RGD} iteration map $g$ with $\alpha > 0$ has the Luzin $N^{-1}$ property, then $g$ avoids the strict saddles of $f$.
\end{corollary}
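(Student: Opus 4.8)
The plan is to obtain this statement as an immediate consequence of the template Theorem~\ref{thm:saddle-avoidance-template}, which already contains all of the dynamical-systems content; what remains is only to bridge from ``unstable fixed points of $g$'' to ``strict saddles of $f$'', and to check the regularity hypothesis. First I would verify that $g$ is $C^1$, as required by Theorem~\ref{thm:saddle-avoidance-template}: since $f$ is $C^2$, the Riemannian gradient $\grad f$ is a $C^1$ vector field, so $x \mapsto (x, -\alpha\grad f(x))$ is a $C^1$ map $\calM \to \T\calM$; composing with the smooth retraction $\Retr$ shows that $g = \Retr_\bullet(-\alpha\grad f(\bullet))$ is $C^1$.

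Next I would identify the strict saddles of $f$ with a subset of the unstable fixed points of $g$, which is exactly the content of Lemma~\ref{lemma:Dgx-critical}. If $x^\ast$ is a strict saddle then $\grad f(x^\ast) = 0$, so $g(x^\ast) = \Retr_{x^\ast}(0) = x^\ast$; and by Lemma~\ref{lemma:Dgx-critical}, $\D g(x^\ast) = I - \alpha\Hess f(x^\ast)$, which (since $\Hess f(x^\ast)$ has a negative eigenvalue and $\alpha > 0$) has an eigenvalue strictly larger than $1$. Hence $x^\ast$ is an unstable fixed point of $g$ in the sense of the earlier definition. Writing $S$ for the set of strict saddles of $f$ and $U$ for the set of unstable fixed points of $g$, this gives $S \subseteq U$, and therefore $W(S) = \bigcup_{x^\ast \in S} W(x^\ast) \subseteq \bigcup_{x^\ast \in U} W(x^\ast)$.

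Finally, since $g$ is $C^1$ and has the Luzin $N^{-1}$ property by hypothesis, Theorem~\ref{thm:saddle-avoidance-template} asserts that $g$ avoids $U$, i.e. $\bigcup_{x^\ast \in U} W(x^\ast)$ has measure zero; by the inclusion above, $W(S)$ has measure zero as well. Unwinding Definitions~\ref{def:systemavoids} and~\ref{def:strictsaddlesandavoidance}, the set of initial points $x_0$ from which the iteration $x_{t+1} = g(x_t)$ converges to a strict saddle of $f$ is precisely $W(S)$, so $g$ avoids the strict saddles of $f$. I do not expect any genuine obstacle here: the corollary is by construction a repackaging of Theorem~\ref{thm:saddle-avoidance-template} and Lemma~\ref{lemma:Dgx-critical}, and the only point needing (minor) care is confirming that ``avoids the strict saddles'' as in Definition~\ref{def:strictsaddlesandavoidance} matches ``$W(S)$ has measure zero'' as in Definition~\ref{def:systemavoids}.
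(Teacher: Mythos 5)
Your proposal is correct and follows essentially the same route as the paper: invoke Lemma~\ref{lemma:Dgx-critical} to conclude that strict saddles of $f$ are unstable fixed points of $g$ (since $\D g(x^\ast) = I - \alpha \Hess f(x^\ast)$ has an eigenvalue exceeding $1$), then apply Theorem~\ref{thm:saddle-avoidance-template} under the Luzin $N^{-1}$ hypothesis. Your extra checks ($C^1$ regularity of $g$ and the matching of the two ``avoidance'' definitions) are fine but not points of divergence.
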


\begin{proof}
	Assume $x^\ast$ is a strict saddle point of $f$.
	Then, $\Hess f(x^\ast)$ has at least one negative eigenvalue. 
	Let $v$ be the unit eigenvector corresponding to such a negative eigenvalue $\lambda < 0$.
	By Lemma~\ref{lemma:Dgx-critical}, $\D g(x^\ast) = I - \alpha \Hess f(x^\ast)$, so $v$ is also an eigenvector of $\D g(x^\ast)$ with eigenvalue $1 - \alpha \lambda > 1$.
	The conclusion then follows from Theorem~\ref{thm:saddle-avoidance-template}.
\end{proof}

\subsection{The Riemannian case: real-analytic retractions} \label{section:analytic}

In Corollary~\ref{cor:rgd-template-theorem}, we confirmed that the \eqref{eq:RGD} iteration map avoids strict saddles provided it has the Luzin~$N^{-1}$ property.
To verify this condition, we adapt Theorem~\ref{thm:euclidean-gd-result} and the associated tools from the Euclidean to the Riemannian case.
We show that if the manifold and the retraction are real analytic, then the iteration map
\begin{align*}
	g_\alpha(x) = \Retr_x(-\alpha \grad f(x))
\end{align*}
satisfies the Luzin $N^{-1}$ condition for almost all $\alpha \in \reals$.

Real-analytic maps are particularly useful in this context because, like polynomials, they do not vanish on large sets of points unless they are identically zero.
We exploit this together with the fact that the map $\alpha \mapsto g_\alpha(x)$ is real analytic (even if $f$ itself is only $C^2$) in order to conclude that, for a fixed $x$, the set of values of $\alpha$ for which $\D g_\alpha(x)$ is singular has measure zero.

Let us review a few basic notions before stating the main theorem of this section.

\begin{definition}[{\protect{\citet[\S2.1, \S2.2]{krantz2002primer}}}] \label{def:realanaliticmapRn}
	Let $U \subseteq \Rm, V \subseteq \Rn$ be open sets. 
	A map $F \colon U \rightarrow V$ is \emph{real analytic} if every 
	point $x \in U$ has a neighborhood $W \subseteq U$ such that $F$ can be 
	expressed as a convergent power series in $W$.
	More precisely, letting $F(x) = (F_1(x), \dots, F_n(x))$, for each coordinate function $F_i$ 
	of $F$, there exists a power series centered at $x$ that converges to $F_i(z)$ for all 
	$z \in W$:
	\begin{equation*}
		F_i(z) = \sum_{\alpha} c_{\alpha} (z - x)^\alpha.
	\end{equation*}
	Here, $c_\alpha$ is a real coefficient, $\alpha = (\alpha_1, \dots, \alpha_m)$ is a multi-index of nonnegative integers and $(z-x)^\alpha$ represents the monomial~$(z_1 - x_1)^{\alpha_1} \cdots (z_m - x_m)^{\alpha_m}$. 
\end{definition}

Similarly to the smooth case, a \emph{real-analytic manifold} is a topological manifold 
with a maximal real-analytic atlas~\citep[Ch.~1]{hirsch1976differential}.
See also~\citep[\S2.7 and \S6.4]{krantz2002primer} for other equivalent definitions.
If a manifold $\calM$ is real analytic, then so is its tangent bundle $\T\calM$.

A map $F \colon \calM \rightarrow \calN$ between real-analytic manifolds is \emph{real analytic} if, for every point $x \in \calM$, there exist real-analytic charts $(U, \varphi)$ around $x$ and $(V, \psi)$ around $F(x)$ with $F(U) \subseteq V$ such that the coordinate representation $\psi \circ F \circ \varphi^{-1} \colon \varphi(U) \to \psi(V)$ is a real-analytic map in the sense of Definition~\ref{def:realanaliticmapRn}.

\begin{theorem} \label{thm:analytic-saddle-avoidance}
	Let $\calM$ be a Riemannian manifold with a retraction $\Retr \colon \T\calM \to \calM$.
	Assume $\calM$ is a real-analytic manifold and that the map $v \mapsto \Retr_x(v)$ is real analytic for each $x \in \calM$ (this holds in particular if $\Retr$ is real analytic).
	Let $f \colon \calM \to \reals$ be $C^2$.
	Then, for almost all $\alpha \in \reals$, the \eqref{eq:RGD} iteration map for $f$ satisfies the Luzin $N^{-1}$ property;
	in particular, \eqref{eq:RGD} avoids the strict saddles of $f$ for almost all $\alpha > 0$.
\end{theorem}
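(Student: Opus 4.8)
The plan is to mirror the structure of the proof of Lemma~\ref{lemma:gd-luzin-almost-any-alpha} from the Euclidean case, replacing the "polynomial in $\alpha$" argument by a "real-analytic in $\alpha$" argument, and then handling the fact that the statement about measure-zero sets must be checked chart-by-chart on the manifold. By Corollary~\ref{cor:rgd-template-theorem} (or Theorem~\ref{thm:saddle-avoidance-template}), strict-saddle avoidance follows once we know the iteration map $g_\alpha$ has the Luzin $N^{-1}$ property, and by Theorem~\ref{thm:luzin-property} this reduces to showing that for almost all $\alpha$, the set $\{x \in \calM \mid \D g_\alpha(x) \text{ is singular}\}$ has measure zero in $\calM$.

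First I would fix a real-analytic chart $(U,\varphi)$ of $\calM$. In this chart, the map $(x,\alpha) \mapsto g_\alpha(x) = \Retr_x(-\alpha \grad f(x))$ has a coordinate representation; the key observation is that, for fixed $x \in U$, the map $\alpha \mapsto g_\alpha(x)$ is real-analytic. This is because $\alpha \mapsto -\alpha\grad f(x)$ is a real-analytic (indeed linear) curve in the fiber $\T_x\calM$, and $v \mapsto \Retr_x(v)$ is real-analytic by hypothesis, so the composition is real-analytic in $\alpha$; crucially this holds even though $f$ is only $C^2$, since $\grad f(x)$ enters only as a fixed vector for fixed $x$. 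Moreover $x \mapsto g_\alpha(x)$ is $C^1$ (composition of the $C^1$ map $x \mapsto (x,-\alpha\grad f(x))$ with the smooth retraction), so $\D_x g_\alpha(x)$ is well-defined and, read in the chart, its entries depend continuously on $(x,\alpha)$ and real-analytically on $\alpha$ for fixed $x$. Hence $\det \D_x g_\alpha(x)$, expressed in the chart, is a continuous function of $(x,\alpha)$ that is real-analytic in $\alpha$ for each fixed $x$.

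Next I would run the Fubini--Tonelli argument exactly as in the Euclidean proof. Define $S_U = \{(x,\alpha) \in \varphi(U) \times \reals \mid \det \D_x g_\alpha(\varphi^{-1}(x)) = 0\}$; this is closed, hence measurable, in $\varphi(U) \times \reals$. For each fixed $x$, the function $\alpha \mapsto \det \D_x g_\alpha$ is real-analytic on $\reals$ and is not identically zero, because at $\alpha = 0$ we have $g_0 = \mathrm{id}$ (since $\Retr_x(0) = x$ with $\D\Retr(x,0)[(u,v)] = u+v$, giving $\D_x g_0(x) = I$), so $\det = 1 \neq 0$. A nonzero real-analytic function on $\reals$ has a discrete, hence Lebesgue-null, zero set (identity theorem for real-analytic functions of one variable), so every vertical slice $S_U^x$ has measure zero. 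By Lemma~\ref{lemma:vertical-horizontal-slices}, it follows that for almost all $\alpha$, the horizontal slice $\{x \in \varphi(U) \mid (x,\alpha) \in S_U\}$ has measure zero in $\Rn$; equivalently, $\{x \in U \mid \D g_\alpha(x) \text{ singular}\}$ has measure zero in $U$ for almost all $\alpha$.

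Finally I would globalize over charts. Since $\calM$ is second countable, it admits a countable atlas $\{(U_k,\varphi_k)\}_{k \geq 0}$; for each $k$ there is a null set $N_k \subseteq \reals$ of "bad" step sizes, and for $\alpha \notin \bigcup_k N_k$ (still a co-null set of $\alpha$) the singular set of $\D g_\alpha$ meets every $U_k$ in a measure-zero set, hence has measure zero in $\calM$. By Theorem~\ref{thm:luzin-property}, $g_\alpha$ then has the Luzin $N^{-1}$ property for almost all $\alpha$; combined with Lemma~\ref{lemma:Dgx-critical} (strict saddles are unstable fixed points) and Theorem~\ref{thm:saddle-avoidance-template}, this gives strict-saddle avoidance for almost all $\alpha > 0$. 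The main obstacle — though a mild one — is being careful that "real-analytic in $\alpha$" survives the passage through a chart and through the differentiation in $x$: one must check that differentiating the chart representation of $g_\alpha$ with respect to the $x$-variables does not destroy analyticity in $\alpha$, which it does not because the $x$-derivative of a function jointly $C^1$ in $x$ and real-analytic in $\alpha$ is still real-analytic in $\alpha$ (the $\alpha$-power-series coefficients are themselves differentiated in $x$). Everything else is a direct transcription of the Euclidean argument.
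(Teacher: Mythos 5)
Your overall architecture matches the paper's: reduce strict-saddle avoidance to the Luzin $N^{-1}$ property via Corollary~\ref{cor:rgd-template-theorem}, reduce Luzin $N^{-1}$ to ``$\D g_\alpha(x)$ singular only on a null set'' via Theorem~\ref{thm:luzin-property}, and run a Fubini--Tonelli slicing argument in $(x,\alpha)$, using real-analyticity in $\alpha$ to kill the vertical slices and a countable atlas to globalize (the paper packages the latter as Lemma~\ref{lemma:vertical-horizontal-slices-mfds}, together with the closedness Lemma~\ref{lemma:singular-set-closed}). However, there is a genuine gap at the one step where the manifold setting actually differs from the Euclidean case. You treat $\alpha \mapsto \det \D g_\alpha(x)$ (``expressed in the chart $(U,\varphi)$'') as a single real-analytic function on all of $\reals$, not identically zero because it equals $1$ at $\alpha = 0$, and then invoke the identity theorem once. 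But $\D g_\alpha(x)$ is a linear map from $\T_x\calM$ to $\T_{g_\alpha(x)}\calM$, and its ``determinant'' only makes sense after fixing bases (equivalently, charts) at \emph{both} $x$ and $g_\alpha(x)$. As $\alpha$ ranges over $\reals$, the point $g_\alpha(x)$ traces a curve that in general leaves any fixed chart domain $U$, so your function $\alpha \mapsto \det \D_x g_\alpha$ is simply not defined on all of $\reals$ within one chart, and your set $S_U$ is not well defined as written. (You cannot fix this with parallel transport along the curve either, since the metric is not assumed real-analytic.) This is exactly the difficulty the paper flags before Lemma~\ref{lemma:vertical-slices-Dg-singular-mfd} and resolves there: for each $\alpha$ one chooses a chart around $y = g_\alpha(x)$, obtains an interval $I_\alpha \ni \alpha$ on which $a \mapsto g_a(x)$ stays in that chart, defines a local real-analytic determinant $F_\alpha$ on $I_\alpha$, applies the identity theorem to get the dichotomy ``$Z_\alpha$ null or $Z_\alpha = I_\alpha$'' on each interval, and then uses Lindel\"of plus a connectedness argument on $\reals$ to propagate the nondegeneracy at $\alpha = 0$ (where $\D g_0(x) = I$) to all intervals. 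Your proposal, which is a correct transcription of Lemma~\ref{lemma:gd-luzin-almost-any-alpha} when $\calM = \Rn$ (a global chart), skips this patching entirely, and that is precisely the content that makes the Riemannian statement nontrivial.

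A secondary, smaller wobble: your parenthetical justification that ``the $x$-derivative of a function jointly $C^1$ in $x$ and real-analytic in $\alpha$ is still real-analytic in $\alpha$ (the power-series coefficients are differentiated in $x$)'' is not a valid general principle---termwise differentiation of the coefficients in $x$ is not licensed by $C^1$-in-$x$ regularity alone. The analyticity in $\alpha$ of $a \mapsto \D g_a(x)$ for fixed $x$ should instead be argued as in the paper, via the chain rule through $\D\Retr$ evaluated along the real-analytic curve $a \mapsto (x, -a\,\grad f(x))$ in the fiber, which is where the fiberwise real-analyticity hypothesis on $\Retr_x$ enters.
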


A few remarks are in order before presenting a proof of this theorem.

\begin{remark} \label{remark:rgd-analytic-requires-retraction-condition}
	It is currently unclear whether the real-analyticity assumption in Theorem~\ref{thm:analytic-saddle-avoidance} is necessary.
	Nevertheless, \emph{some} condition on the retraction is needed: without it, we can construct a counter-example.
	Indeed, if $f$ has a strict saddle point $x^*$, then one can construct a retraction that maps a large region of the tangent	bundle to $x^*$.
	For instance, suppose $\calM$ is a Hadamard manifold (i.e., complete, simply connected, with nonpositive sectional curvature).
	Then, the exponential retraction is globally defined and has an inverse.
	Choose a bounded open set $U \subseteq \calM$ such that $\inf_{x \in U} \| \grad f(x) \| \geq G > 0$, where $\| \cdot \|$ denotes the norm induced by the Riemannian metric.
	Select $0 < d_1 < d_2 < G$.
	Let $q \colon [0, \infty) \to [0, 1]$ be a smooth transition function such that $q(t) = 1$ for $t \leq d_1^2$ and $q(t) = 0$ for $t \geq d_2^2$.
	Then, define the retraction as
	\begin{equation*}
		\Retr_x(v) = 
		\Exp_{x^\ast}\!\Bigg( q \big(\| v \|^2 \big) \; \Log_{x^\ast}\!\big( \Exp_x(v) \big) \Bigg).
	\end{equation*}
	With this retraction and any $\alpha \geq 1$, we have $g(x) = \Retr_x(-\alpha \grad f(x)) = x^\ast$ for all $x \in U$, 
	so clearly $g$ does not avoid strict saddles.
\end{remark}

\begin{remark} \label{rem:analytic-retraction}
	To check that the retraction is real analytic, it is often convenient to check that it admits a real-analytic extension.
	For example, on the unit sphere $\Sn = \{x \in \Rn : \|x\| = 1\}$, the retraction $\Retr_x(v) = \frac{x+v}{\|x+v\|} = \frac{x+v}{\sqrt{1+\|v\|^2}}$ is real analytic because the map $(x, v) \mapsto \frac{x+v}{\sqrt{1+\|v\|^2}}$ is real analytic on $\Rn \times \Rn$ and the tangent bundle of $\Sn$ is a real-analytic submanifold of $\Rn \times \Rn$.
	\arxivonly{Some details follow (arxiv version).
	
	Let $\overline{\calN}$ and $\calM$ be real-analytic manifolds.
	Assume $\calN$ is a real-analytic submanifold of $\overline{\calN}$, that is, $\calN \subseteq \overline{\calN}$ is a real-analytic manifold such that the inclusion map $i \colon \calN \to \overline{\calN} \colon x \mapsto i(x) = x$ is real analytic.
	If $\bar{F} \colon \overline{\calN} \to \calM$ is real analytic, then the restriction $F = \bar{F}|_\calN$ is also real analytic.
	Indeed, for all $x \in \calN$ there exist real-analytic charts $(U, \varphi)$ of $\calN$ around $x$ and $(V, \psi)$ of $\calM$ around $F(x)$ such that $F(U) \subseteq V$.
	Thus, $\psi \circ F \circ \varphi^{-1} = \psi \circ \bar{F} \circ i \circ \varphi^{-1} \colon \varphi(U) \to \psi(V)$,	which is real analytic by composition.

	To study retractions as in the example above, we apply that general statement to $F = \Retr$ with $\calN = \T\calM$ together with the fact that if $\calM$ is a real-analytic submanifold of a linear space $\calE$ then $\T\calM$ is a real-analytic submanifold of $\calE \times \calE$.}
\end{remark}

\begin{remark}
	Theorem~\ref{thm:analytic-saddle-avoidance} is a particular instance of a more general result, as follows.
	Let $\calM$ be a smooth manifold and consider a family of maps $g_\delta \colon \calM \to \calM$ with parameter $\delta \in \Rk$.
	Assume $(x, \delta) \mapsto g_\delta(x)$ is continuous in $\delta$ and $C^1$ in $x$ and that for almost all $x \in \calM$ the set 
	\begin{equation*}
		\{ \delta \in \Rk \mid \D g_\delta(x) \textrm{ is singular\,} \}
	\end{equation*}
	has measure zero in $\Rk$.
	Then, $g_\delta$ avoids its unstable fixed points for almost all $\delta$. 
	
	This follows from Theorem~\ref{thm:saddle-avoidance-template}, since the assumptions imply that almost all maps $g_\delta$ satisfy the Luzin $N^{-1}$ property.
	This can be shown with a suitable adaptation of Lemma~\ref{lemma:vertical-horizontal-slices}.
\end{remark}

The remainder of this section constitutes a proof of Theorem~\ref{thm:analytic-saddle-avoidance}.

The following result extends Lemma~\ref{lemma:vertical-horizontal-slices} to the product space 
$\calM \times \reals$.
While the statement holds more generally for arbitrary measurable subsets $S$ of a product of measure spaces,
we restrict our attention to the specific case needed here, as it is easier to state.
To avoid the technicalities of defining measures on manifolds, we instead carry out the proof using coordinate charts as this is sufficient to define measure zero sets.

\begin{lemma} \label{lemma:vertical-horizontal-slices-mfds}
	Let $\calM$ be a smooth manifold and let $S$ be a closed subset of $\calM \times \reals$. 
	The following statements are equivalent:
	\begin{enumerate}
		\item $S$ has measure zero. 
		\item For almost all $x \in \calM$, the slice $S^x = \{ \alpha \in \reals \mid (x, \alpha) \in S \}$
		has measure zero in $\reals$.
		\item For almost all $\alpha \in \reals$, the slice $S_\alpha = \{ x \in \calM \mid (x, \alpha) \in S \}$
		has measure zero in $\calM$.
	\end{enumerate}
\end{lemma}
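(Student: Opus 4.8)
The plan is to reduce the statement to the Euclidean Lemma~\ref{lemma:vertical-horizontal-slices} by working chart by chart. Since $\calM$ is second countable, I would fix a countable atlas $\{(U_i, \varphi_i)\}_{i \geq 0}$ with $\varphi_i \colon U_i \to \varphi_i(U_i) \subseteq \Rn$, where $n = \dim \calM$. Each map $\varphi_i \times \mathrm{id}_\reals$ is a homeomorphism from the open set $U_i \times \reals \subseteq \calM \times \reals$ onto the open set $\varphi_i(U_i) \times \reals \subseteq \Rn \times \reals$, so $S_i := (\varphi_i \times \mathrm{id})\big(S \cap (U_i \times \reals)\big)$ is closed in $\varphi_i(U_i) \times \reals$, hence an $F_\sigma$ and in particular a measurable subset of $\Rn \times \reals$ to which Lemma~\ref{lemma:vertical-horizontal-slices} applies. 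By the definition of measure zero on a manifold, together with the fact that it suffices to check a covering family of charts \citep[Lem.~6.6]{lee2012smoothmanifolds}, the set $S$ has measure zero in $\calM \times \reals$ if and only if every $S_i$ has measure zero in $\Rn \times \reals$, and a subset $A \subseteq \calM$ has measure zero if and only if every $\varphi_i(A \cap U_i)$ has measure zero in $\Rn$.

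Next I would record how the slices transform. For $x \in U_i$ and $\xi = \varphi_i(x)$, the vertical slice satisfies $(S_i)^{\xi} = S^x$, since the $\reals$ factor is untouched; and for every $\alpha$, the horizontal slice satisfies $(S_i)_\alpha = \varphi_i(S_\alpha \cap U_i)$. Moreover $S_i \subseteq \varphi_i(U_i) \times \reals$, so $(S_i)^\xi = \varnothing$ for $\xi \notin \varphi_i(U_i)$; thus asking that $(S_i)^\xi$ be null for almost all $\xi \in \Rn$ is the same as asking it for almost all $\xi \in \varphi_i(U_i)$.

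With these identifications, each equivalence follows by applying Lemma~\ref{lemma:vertical-horizontal-slices} to each $S_i$ and patching over $i$. For (1)$\Leftrightarrow$(2): if $S$ is null then so is each $S_i$, hence for each $i$ there is a null set $N_i \subseteq \Rn$ outside of which $(S_i)^\xi$ is null; pulling back by $\varphi_i^{-1}$ and taking the union over $i$ gives a null set $N \subseteq \calM$ outside of which $S^x$ is null, which is (2). Conversely, (2) forces $(S_i)^\xi$ to be null for almost all $\xi \in \varphi_i(U_i)$, hence (by the emptiness remark) for almost all $\xi \in \Rn$, so Lemma~\ref{lemma:vertical-horizontal-slices} yields $\mu(S_i) = 0$ for all $i$, i.e. (1). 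The equivalence (1)$\Leftrightarrow$(3) is symmetric, the only difference being that the exceptional sets now live in the $\reals$ factor: from $\mu(S_i) = 0$ one obtains a null set $B_i \subseteq \reals$ outside of which $(S_i)_\alpha = \varphi_i(S_\alpha \cap U_i)$ is null, and $B = \bigcup_i B_i$ is the sought null set of bad $\alpha$; the reverse implication runs Lemma~\ref{lemma:vertical-horizontal-slices} the other way, chart by chart.

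I do not expect any genuine obstacle here; the only thing to handle carefully is that the qualifier ``almost all'' must survive the two reductions performed — moving between $\calM$ and its countably many chart domains, and assembling the per-chart exceptional null sets into a single null set via countable subadditivity of Lebesgue measure. The hypothesis that $S$ is closed is used precisely to guarantee that each $S_i$ is measurable so that Lemma~\ref{lemma:vertical-horizontal-slices} is applicable, and closedness is all that is required in the intended application.
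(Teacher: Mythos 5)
Your proposal is correct and follows essentially the same route as the paper: pass to a countable atlas, push $S$ through each chart $\varphi_i \times \mathrm{id}$ (using closedness only to get measurability), identify the vertical and horizontal slices with $S^x$ and $\varphi_i(S_\alpha \cap U_i)$, apply Lemma~\ref{lemma:vertical-horizontal-slices} chart by chart, and patch the countably many exceptional null sets. Your write-up is in fact slightly more explicit than the paper's about assembling the per-chart exceptional sets and about the empty slices outside $\varphi_i(U_i)$, but these are details, not a different argument.
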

\begin{proof}
	Let $n = \dim(\calM)$.
	Consider a chart $(U, \varphi)$ of $\calM$, and let $V = \varphi(U) \subset \Rn$.
	Then $(U \times \reals, \varphi \times \mathrm{id})$ is a chart for $\calM \times \reals$, with image $V \times \reals \subset \Rn \times \reals$.
	Define the image of (part of) $S$ through that chart as
	\begin{equation*}
		\tilde{S} = (\varphi \times \mathrm{id})(S \cap (U \times \reals)) \subset V \times \reals.
	\end{equation*}
	Since $S$ is closed in $\calM \times \reals$ by assumption, the set $S \cap (U \times \reals)$ is closed in $U \times \reals$.
	Because $\varphi \times \mathrm{id}$ is a homeomorphism, $\tilde{S}$ is closed in $V \times \reals$.
	Therefore, $\tilde{S}$ is a measurable subset of $\Rn \times \reals$.
	
	For $z \in V$ and $x = \varphi^{-1}(z)$, the vertical slice of $\tilde{S}$ at $z$ is
	\begin{equation*}
		\tilde{S}^z = 
		\{ \alpha \in \reals \mid (z, \alpha) \in \tilde{S} \} = 
		\{ \alpha \in \reals \mid (x, \alpha) \in S \} =
		S^x.
	\end{equation*}
	For $\alpha \in \reals$, the horizontal slice is
	\begin{equation*}
		\tilde{S}_\alpha = 
		\{ z \in V \mid (z, \alpha) \in \tilde{S} \} = 
		\{ \varphi(x) \mid x \in U \text{ and } (x, \alpha) \in S \} = 
		\varphi(S_\alpha \cap U).
	\end{equation*}
	By Lemma~\ref{lemma:vertical-horizontal-slices}, the following are equivalent:
	\begin{enumerate}
		\item $\tilde{S}$ has measure zero in $\reals^{n+1}$.
		\item For almost all $x \in U$, the slice $S^x$ has measure zero in $\reals$.
		\item For almost all $\alpha \in \reals$, the slice $S_\alpha \cap U$ has measure zero in $\calM$.
	\end{enumerate}
		
	To conclude, observe that $\calM$ admits a countable atlas $\{ (U_i, \varphi_i) \}_{i \geq 0}$, and the union of the 
	corresponding chart domains $U_i \times \reals$ covers $\calM \times \reals$.
\end{proof}

In the proof of Theorem~\ref{thm:analytic-saddle-avoidance}, we use the previous lemma for the set $S$ of tuples $(x, \alpha)$ such that the differential of the iteration map is singular.
That set is indeed closed.
\begin{lemma} \label{lemma:singular-set-closed}
	Let $\calM$ be a Riemannian manifold with $C^1$ retraction $\Retr \colon \T\calM \to \calM$. 
	Let $f \colon \calM \to \reals$ be $C^2$ and denote the \eqref{eq:RGD} iteration map with step size $\alpha$ by $g_\alpha$. 
	Then, the set 
	\begin{equation*}
		S = \{ (x, \alpha) \in \calM \times \reals \mid \D g_\alpha(x) \textrm{ is singular\,}\}
	\end{equation*}
	is closed in $\calM \times \reals$.
\end{lemma}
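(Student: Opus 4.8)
The plan is to realise $S$ locally, in charts, as the zero set of a continuous function, and then invoke the fact that closedness is a local property. The only point requiring care is that $\D g_\alpha(x)$ is a linear map between the tangent spaces $\T_x\calM$ and $\T_{g_\alpha(x)}\calM$, which are in general distinct, so there is no globally defined scalar ``$\det \D g_\alpha(x)$''; we circumvent this by working in charts and using that singularity (non-invertibility) of a linear map between finite-dimensional spaces of the same dimension is independent of the choice of bases, so that $S$ is well defined regardless of the charts used to compute such a determinant.

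First I would note that $g \colon \calM \times \reals \to \calM$, $(x,\alpha) \mapsto g_\alpha(x) = \Retr_x(-\alpha\,\grad f(x))$, is jointly $C^1$: it is the composition of the $C^1$ retraction $\Retr \colon \T\calM \to \calM$ with the map $(x,\alpha) \mapsto (x, -\alpha\,\grad f(x)) \in \T\calM$, and this latter map is $C^1$ because $\grad f$ is $C^1$ (as $f$ is $C^2$) and scalar multiplication on the tangent bundle is smooth. Consequently the differential $\D g_\alpha(x)$, which is the partial differential of $g$ in the $x$-variable, depends continuously on $(x,\alpha)$.

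Next, fix $(x_0,\alpha_0) \in \calM \times \reals$ and choose a chart $(U,\varphi)$ of $\calM$ around $x_0$ and a chart $(V,\psi)$ of $\calM$ around $g_{\alpha_0}(x_0)$. By continuity of $g$, the set $\calO := (U \times \reals) \cap g^{-1}(V)$ is open and contains $(x_0,\alpha_0)$. For $(x,\alpha) \in \calO$ (so that $g_\alpha(x) \in V$), let $J(x,\alpha) \in \Rnn$, with $n = \dim \calM$, be the Jacobian matrix of $z \mapsto \psi\big(g_\alpha(\varphi^{-1}(z))\big)$ at $z = \varphi(x)$; this is the matrix of $\D g_\alpha(x)$ in the coordinate bases, so $\D g_\alpha(x)$ is singular if and only if $\det J(x,\alpha) = 0$. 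Because $g$ is jointly $C^1$, the map $(x,\alpha) \mapsto J(x,\alpha)$ is continuous on $\calO$, hence so is $(x,\alpha) \mapsto \det J(x,\alpha)$, and therefore $S \cap \calO = \{(x,\alpha) \in \calO \mid \det J(x,\alpha) = 0\}$ is relatively closed in $\calO$. Since such neighborhoods $\calO$ cover $\calM \times \reals$ and closedness is a local property, $S$ is closed in $\calM \times \reals$.

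I do not expect a genuine obstacle here; the only subtlety, as flagged above, is that the target point $g_\alpha(x)$ must remain in one fixed chart as $(x,\alpha)$ ranges over a neighborhood, which is exactly what the continuity of $g$ secures through the choice of $\calO$. A cosmetic alternative to the ``local property'' phrasing is to argue directly that any $(x_0,\alpha_0)\notin S$ has $\det J(x_0,\alpha_0)\neq 0$, whence continuity gives a neighborhood disjoint from $S$.
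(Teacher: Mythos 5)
Your proof is correct, but it takes a genuinely different route from the paper's. The paper lifts the problem to the tangent bundle: it introduces the auxiliary set $S' = \{ (x, v, \alpha) \in \T\calM \times \reals \mid \D g_\alpha(x)[v] = 0, \ \|v\| = 1 \}$, observes that $S'$ is closed, and then, given a sequence in $S$ converging to $(x,\alpha)$, extracts (after identifying unit tangent vectors over a chart domain with points of $\Sn$) a convergent subsequence of unit kernel vectors by compactness of the sphere, whose limit witnesses $(x,\alpha) \in S$. You instead avoid sequences and witnesses altogether: you use that $(x,\alpha) \mapsto g_\alpha(x)$ is jointly $C^1$, represent $\D g_\alpha(x)$ by its Jacobian matrix in a pair of charts valid on an open set $\calO$ (obtained by intersecting with $g^{-1}(V)$, which correctly handles the moving target chart), and realize $S \cap \calO$ as the zero set of the continuous function $\det J$, concluding via the local characterization of closedness (or, as you note, directly by exhibiting a neighborhood of any point of the complement avoiding $S$). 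Both arguments rest on the same underlying fact---joint continuity of the differential---and both are sound; yours is arguably more direct since it dispenses with the subsequence extraction and the auxiliary set $S'$, at the cost of invoking chart-dependent determinants, a device the paper ends up using anyway in the proof of Lemma~\ref{lemma:vertical-slices-Dg-singular-mfd}.
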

\begin{proof}
	The following set is closed as it is defined by continuous equality conditions:
	\begin{equation*}
		S' = 
		\{ (x, v, \alpha) \in \T \calM \times \reals \mid \D g_\alpha(x)[v] = 0 \text{ and } \| v \| = 1 \}.
	\end{equation*}
	Let $\{(x_t, \alpha_t)\}_{t \geq 0}$ be a sequence in $S$ and assume that 
	$\lim_{t \to \infty} (x_t, \alpha_t) = (x, \alpha)$ is its limit in $\calM \times \reals$.
	To show that $S$ is closed, it suffices to show that $(x, \alpha)$ is in $S$.
	Since $(x_t, \alpha_t) \in S$, for each $t$, there exists $v_t \in \T_{x_t} \calM$ such that $(x_t, v_t, \alpha_t)$ is in $S'$.
	Let $U$ be a chart domain around $x$ so that the tangent bundle of $\calM$ restricted to $U$ can be identified with $U \times \Rn$.
	In particular, for $(x, v, \alpha) \in S'$ with $x \in U$ we identify $v$ with an element of $\Sn = \{u \in \Rn : \|u\| = 1\}$.
	As $x_t$ converges to $x$, there exists an index $k$ such that $x_t \in U$ for all $t \geq k$.
	Since $\Sn$ is compact, we can pass to a subsequence indexed by $t_i$ such that $(x_{t_i}, v_{t_i})$ converges: let $(x, v)$ denote the limit.
	Then, the sequence $\{ (x_{t_i}, v_{t_i}, \alpha_{t_i}) \}_{i \geq 0}$ with elements in $S'$ has a limit $(x, v, \alpha)$, and this limit is in $S'$, as the set $S'$ is closed.
	Therefore, $(x, \alpha) \in S$.
\end{proof}

In the next lemma, we argue for a fixed $x$ that the set of parameters $\alpha$ such that $\D g_\alpha(x)$ is singular has measure zero.
Previously, we observed in the Euclidean case that this set coincides with the roots of a non-zero polynomial.
While this structure does not generally carry over to the manifold setting, we can still show that the singular set is locally the zero level set of a nonzero real-analytic function---hence that it has measure zero.
Morally, the proof relies on the fact that the determinant of the differential of $g_\alpha$ is real analytic in $\alpha$, though some care is needed because the determinant is only well defined after specifying bases for the tangent spaces at $x$ and $g_\alpha(x)$; those bases need to depend on $\alpha$ in a real-analytic fashion, hence the argument needs to be executed locally then patched together.

\begin{lemma} \label{lemma:vertical-slices-Dg-singular-mfd}
	Let $\calM$ be a Riemannian manifold with a retraction $\Retr \colon \T\calM \to \calM$.
	Assume $\calM$ is real analytic and that the map $v \mapsto \Retr_x(v)$ is real analytic for each $x \in \calM$.
	Let $f \colon \calM \to \reals$ be $C^2$ and denote the \eqref{eq:RGD} iteration map with step size $\alpha$ by $g_\alpha$. 
	Then, for all $x \in \calM$, the set 
	\begin{equation*}
		S^x \coloneqq 
		\{ \alpha \in \reals \mid \D g_\alpha (x) \textrm{ is singular\,} \}
	\end{equation*}
	has measure zero in $\reals$.
\end{lemma}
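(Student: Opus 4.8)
The plan is to show that $S^x$ is countable, which immediately gives $\mu_1(S^x)=0$. Fix $x\in\calM$, set $w=\grad f(x)\in\T_x\calM$, and consider the curve $c(\alpha)=g_\alpha(x)=\Retr_x(-\alpha w)$. Since $v\mapsto\Retr_x(v)$ is real analytic and $\alpha\mapsto-\alpha w$ is affine, $c\colon\reals\to\calM$ is real analytic. As $\reals$ is second countable, I would fix a countable family of bounded open intervals $\{I_j\}_{j\ge0}$ covering $\reals$ together with real-analytic charts $(V_j,\psi_j)$ of $\calM$ such that $c(I_j)\subseteq V_j$, and a fixed real-analytic chart $(U,\varphi)$ around $x$ (shrinking each $I_j$ if needed so that $g_\alpha$ carries a fixed neighborhood of $x$ into $V_j$ for all $\alpha\in I_j$). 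Then $S^x=\bigcup_j(S^x\cap I_j)$, so it suffices to show each $S^x\cap I_j$ is countable.

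On $I_j$, write $\hat g_\alpha=\psi_j\circ g_\alpha\circ\varphi^{-1}$; since $\varphi$ and $\psi_j$ are diffeomorphisms, $\D g_\alpha(x)$ is singular if and only if the $n\times n$ matrix $M_j(\alpha):=\D\hat g_\alpha(\varphi(x))$ satisfies $\det M_j(\alpha)=0$ (here $n=\dim\calM$). The heart of the argument is to prove that $\alpha\mapsto\det M_j(\alpha)$ is real analytic on $I_j$. Granting this, I would note it is not identically zero: since $g_0=\Retr_\cdot(0)=\Id_\calM$ we have $\D g_0(x)=\Id$, and if $\det M_j\equiv0$ on some $I_j$ then $\D g_\alpha(x)$ is singular for all $\alpha\in I_j$; propagating this along a chain of overlapping intervals connecting $I_j$ to an interval containing $0$ (such a chain exists because $\{I_j\}$ is an open cover of the connected set $\reals$, and a real-analytic function vanishing on a nonempty open subinterval vanishes identically on its interval of definition) would force $\D g_0(x)$ to be singular, a contradiction. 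A nonzero real-analytic function on the open interval $I_j$ has a zero set with no accumulation point in $I_j$, hence $S^x\cap I_j$ is countable, and therefore so is $S^x$.

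To establish the real-analyticity of $\alpha\mapsto\det M_j(\alpha)$, I would work in the coordinates above, where $\hat g_\alpha(z)=R(z,-\alpha W(z))$ with $R$ the coordinate representation of $\Retr$ (smooth, and real analytic in its fiber argument for each fixed base point, because $v\mapsto\Retr_x(v)$ and the charts are real analytic) and $W$ the coordinate representation of $\grad f$ (only $C^1$). Writing $z_0=\varphi(x)$ and $w_0=W(z_0)$, the chain rule gives $M_j(\alpha)=(\partial_z R)(z_0,-\alpha w_0)-\alpha\,(\partial_v R)(z_0,-\alpha w_0)\,\D W(z_0)$. Since $x$, hence $z_0$, is fixed, $\D W(z_0)$ is a constant matrix and the only $\alpha$-dependence is through $\alpha$ itself (a polynomial) and through the partial differentials of $R$ evaluated along the affine path $\alpha\mapsto(z_0,-\alpha w_0)$. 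Now $v\mapsto(\partial_v R)(z_0,v)$ is real analytic, being the differential of the real-analytic map $v\mapsto R(z_0,v)$, hence $\alpha\mapsto(\partial_v R)(z_0,-\alpha w_0)$ is real analytic; the same should hold for $\alpha\mapsto(\partial_z R)(z_0,-\alpha w_0)$. Thus every entry of $M_j(\alpha)$ is real analytic in $\alpha$, and $\det M_j(\alpha)$, a polynomial in those entries, is real analytic on $I_j$.

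I expect the main obstacle to be exactly the claim that the base-point contribution $\alpha\mapsto(\partial_z R)(z_0,-\alpha w_0)$ is real analytic: the hypothesis only gives real-analyticity of $\Retr$ in the fiber direction, whereas $\partial_z R$ differentiates in the base point. This is immediate when $\Retr$ is jointly real analytic, which covers all situations of interest (the exponential map of a real-analytic metric, the metric-projection retraction onto a real-analytic submanifold, the normalized retraction on the sphere), since then $R$ itself is jointly real analytic; otherwise one would deduce it from joint smoothness of $\Retr$ together with fiberwise real-analyticity, e.g.\ via uniform Cauchy estimates on compact sets. A secondary point needing care is that the locally defined $\det M_j$ depend on the chosen charts only up to a nowhere-vanishing factor that is real analytic in $\alpha$ (a transition-map Jacobian composed with the real-analytic curve $\alpha\mapsto\psi_j(c(\alpha))$), so that "$\D g_\alpha(x)$ is singular'' and "$\det M_j\equiv0$ on an interval'' are chart-independent statements — which is what legitimizes the propagation argument in the second paragraph.
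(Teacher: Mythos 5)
Your proposal follows essentially the same route as the paper's proof: represent $\D g_\alpha(x)$ in real-analytic charts as a matrix $M(\alpha)$, argue that $\alpha \mapsto \det M(\alpha)$ is real analytic on each member of a countable cover of $\reals$ by intervals, rule out identical vanishing by propagating it to an interval containing $\alpha = 0$, where $\D g_0(x) = \Id$ gives a contradiction, and conclude that $S^x$ is negligible. The differences are cosmetic: you conclude countability of $S^x \cap I_j$ where the paper only invokes the identity theorem to get measure zero; you build the countable cover from the curve $\alpha \mapsto \Retr_x(-\alpha \grad f(x))$ where the paper extracts it by Lindel\"of from intervals $I_\alpha$ chosen per $\alpha$; and you propagate ``$\det \equiv 0$'' along a chain of overlapping intervals where the paper runs a clopen/connectedness argument with the sets $Q_1, Q_2$. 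Your remark that singularity of $\D g_a(x)$ is a chart-free statement is exactly what legitimizes either propagation.

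The one substantive point is the obstacle you flag yourself, and here the paper does no more than you do: it simply asserts that $F_\alpha(a) = \det M_\alpha(a)$ is real analytic ``by composition,'' without isolating the base-point block $\partial_z R(z_0, -\alpha w_0)$. Your first resolution---joint real-analyticity of $\Retr$, the parenthetical case of Theorem~\ref{thm:analytic-saddle-avoidance} and the situation in every example the paper uses (exponential map of an analytic metric, metric projection, normalization on the sphere)---is the right one, and with it your computation is complete and matches the paper's level of rigor. Your fallback, however, is not a valid general principle: joint smoothness together with fiberwise real-analyticity does not yield uniform Cauchy estimates, and the base derivative can fail to be analytic. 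For instance, take $\phi$ smooth, compactly supported and not identically zero (hence not real analytic), set $t(z) = e^{-1/z^2}$ for $z \neq 0$, $t(0) = 0$, and let $F(z,w) = z\,\big(e^{t(z)\Delta}\phi\big)(w)$, where $e^{t\Delta}$ is the heat semigroup: then $F$ is jointly $C^\infty$, and for every fixed $z$ the map $w \mapsto F(z,w)$ is real analytic (entire for $z \neq 0$, identically zero for $z = 0$), yet $\partial_z F(0,\cdot) = \phi$ is not analytic. So under the lemma's literal hypothesis (fiberwise analyticity only), this step needs either a strengthened assumption (e.g., $v \mapsto \D\Retr(x,v)$ real analytic, or $\Retr$ jointly real analytic) or an argument that neither you nor the paper supplies; under joint analyticity of the retraction your proof is correct and coincides with the paper's.
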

\begin{proof}
	Since $f$ is $C^2$ and the map $v \mapsto \Retr_x(v)$ is real analytic, the map
	\begin{equation*}
		(x, \alpha) \mapsto g_\alpha(x) = \Retr_x\!\left( -\alpha \grad f(x) \right)
	\end{equation*}
	is $C^1$ in $x$ and real analytic in $\alpha$.
	Fix an arbitrary point $x \in \calM$ and consider a real-analytic chart $(U_x, \varphi_x)$ around $x$.
	For some $\alpha \in \reals$, let $y = g_\alpha(x)$ and choose a real-analytic chart $(U_\alpha, \varphi_\alpha)$ around $y$.
	The set $\{ a \in \reals \mid g_a(x) \in U_\alpha \}$ is open and it contains $\alpha$, so we can extract an open interval
	$I_{\alpha}$ from it such that $\alpha \in I_{\alpha}$.
	
	Let us define a function $F_\alpha \colon I_{\alpha} \to \reals$ such that $F_\alpha(a) = 0$ exactly when $\D g_a(x)$ is singular.
	To do so, we represent the linear operator $\D g_a(x) \colon \T_x \calM \to \T_{g_a(x)} \calM$ as a matrix and define $F_\alpha(a)$ as its determinant.

	Explicitly, let $u_1, \ldots, u_n$ form a basis of $\T_x \calM$, and let $e_1, \ldots, e_n$ be the standard basis of $\Rn$.
	Define the matrix-valued map $M_\alpha \colon I_\alpha \to \reals^{n \times n}$ with elements given by 
	\begin{equation*}
		[M_\alpha(a)]_{ji} = e_j^\top \D \varphi_\alpha \big( g_a(x) \big) \big[\D g_a(x)[u_i] \big].
	\end{equation*}
	Observe that $\D g_a(x)$ is singular if and only if $M_\alpha(a)$ is singular.
	Indeed, if there exists some nonzero $v \in \T_x \calM$ such that $\D g_a(x)[v] = 0$, then for all $j$ in $1, \ldots, n$ it holds that 
	\begin{equation*}
		e_j^\top \D \varphi_\alpha \big( g_a(x) \big) \big[\D g_a(x)[v] \big] = 0 .
	\end{equation*}
	Expanding $v = \sum_{i=1}^n c_i u_i$, the above is equivalent to $M_\alpha(a)[c] = 0$ for some nonzero vector $c \in \Rn$.
	For the reverse direction, assume there exists a non-zero vector $v = \sum_{i=1}^n c_i u_i$ in the kernel of $M_{\alpha}(a)$.
	This implies that $[M_{\alpha}(a) c]_j = 0$ for all $j$.
	At the same time, we have that 
	\begin{align*}
		[M_{\alpha}(a) c]_j = 
		\sum_{i=1}^n [M_{\alpha}(a)]_{ji} c_i = 
		\sum_{i=1}^n c_i e_j^\top \D \varphi_\alpha \big( g_a(x) \big) \big[\D g_a(x)[u_i] \big] = 
		e_j^\top \D \varphi_\alpha \big( g_a(x) \big) \big[\D g_a(x)[v] \big],
	\end{align*}
	hence $\D \varphi_\alpha \big( g_a(x) \big) \big[\D g_a(x)[v] \big] = 0$, and the conclusion follows since $\D \varphi_{\alpha}(g_{\alpha}(x))$ is invertible.

	Now set 
	\begin{equation*}
		F_{\alpha} \colon I_{\alpha} \to \reals, \qquad 
		F_{\alpha} (a) = \det (M_{\alpha}(a)).
	\end{equation*}
	By composition, $F_\alpha$ is real analytic.
	Define the set of zeros of $F_\alpha$ as 
	\begin{equation} \label{eq:Z_F_alpha}
		Z_\alpha =
		\{ a \in I_\alpha \mid F_\alpha(a) = 0 \} = 
		\{ a \in I_\alpha \mid \D g_a(x) \text{ is singular} \},
	\end{equation}
	where we used that $\D g_a(x)$ is singular if and only if $M_\alpha(a)$ is singular.
	From the Identity Theorem for real-analytic functions \cite[Cor.~1.2.7]{krantz2002primer}, we obtain that 
	\begin{align}
		\textrm{either } \mu\big( Z_\alpha \big) = 0,  && \textrm{ or } Z_\alpha = I_\alpha.
		\label{eq:id-thm-1d-conclusion}
	\end{align}
	Since the sets $\{ I_\alpha \}_{\alpha \in \reals}$ form an open cover of $\reals$, we use Lindel\"{o}f's
	lemma to extract a countable subcover $\{ I_{\alpha_i} \}_{i \geq 0}$.
	If $\mu(Z_{\alpha_i}) = 0$ for all $i \geq 0$, then immediately we obtain 
	\begin{equation}
		0 \leq 
		\mu \Big( \{ a \in \reals \mid \D g_a (x) \textrm{ is singular\,} \} \Big) \leq 
		\sum_{i \geq 0} \mu(Z_{\alpha_i}) = 
		0,
	\end{equation}
	as announced.

	Otherwise, there exists an index $j$ such that $Z_{\alpha_j} = I_{\alpha_j}$.
	Let us show that this cannot happen.
	The strategy is to deduce that $\D g_a(x)$ is singular for all $a \in \reals$, then to derive a contradiction from the fact that $g_0(x) = x$ for all $x \in \calM$, so that $\D g_0(x) = I$ is not singular. 
	
	Let $J$ be the non-empty set containing all indices $j \geq 0$ for which $Z_{\alpha_j} = I_{\alpha_j}$.
	We first show that, in fact, $J = \naturals$.
	For contradiction, assume that $J \neq \naturals$.
	Define the sets 
	\begin{align*}
		Q_1 = \bigcup_{j \in J} I_{\alpha_j}
		&& \textrm{ and } &&
		Q_2 = \bigcup_{i \in \naturals \setminus J} I_{\alpha_i}.
	\end{align*}
	Note that both sets are open and $Q_1 \cup Q_2 = \reals$.

	If $Q_1 \cap Q_2$ is non-empty, we can find some $j \in J$ and
	$i \in \naturals \setminus J$ such that $I_{\alpha_j} \cap I_{\alpha_i} \neq \emptyset$.
	This implies that $Z_{\alpha_i}$ contains the open set $I_{\alpha_j} \cap I_{\alpha_i} $.
	From~\eqref{eq:id-thm-1d-conclusion}, we deduce that $Z_{\alpha_i} = I_{\alpha_i}$ and hence $i$ is in $J$: a contradiction.

	Therefore, $Q_1 \cap Q_2$ is empty.
	Then, each set is the complement of the other one, so that they are both simultaneously open and closed.
	The only such subsets of $\reals$ are the empty set and $\reals$ itself.
	Since $Q_1$ is non-empty by assumption, it follows that $Q_1 = \reals$, that is, $\D g_a(x)$ is singular for all $a \in \reals$.
	However, this contradicts $\D g_0(x) = I$.
\end{proof}

The proof of Theorem~\ref{thm:analytic-saddle-avoidance} is now straightforward:
the set
\begin{equation*}
	S = \{ (x, \alpha) \in \calM \times \reals \mid \D g_\alpha(x) \textrm{ is singular\,}\}
\end{equation*}
is closed by Lemma~\ref{lemma:singular-set-closed}.
From Lemma~\ref{lemma:vertical-slices-Dg-singular-mfd}, for each $x \in \calM$ the vertical slice
\begin{equation*}
	S^x = \{ \alpha \in \reals \mid \D g_\alpha(x) \textrm{ is singular\,}\}
\end{equation*}
has measure zero in $\reals$.
Then, applying Lemma~\ref{lemma:vertical-horizontal-slices-mfds}, we obtain that, for almost all $\alpha \in \reals$,
\begin{equation*}
	\mu \Big( \{ x \in \calM \mid \D g_\alpha(x) \textrm{ is singular\,} \} \Big) = 0.
\end{equation*}
By Theorem~\ref{thm:luzin-property}, it follows that $g_\alpha$ has the Luzin $N^{-1}$ property for almost all $\alpha \in \reals$.
The conclusion then follows from Corollary~\ref{cor:rgd-template-theorem}.

\section{Saddle avoidance with a backtracking line-search method} \label{section:line-search}

The results above pertain to general dynamical systems with a single, regular iteration map $g$, together with applications to GD with constant step size.
In this section, we establish a saddle avoidance result for GD with a backtracking line-search method, as described in Algorithm~\ref{algo:bkt-stab}.
This is (Riemannian) GD with a \emph{modified} version of the standard backtracking line-search method using the Armijo criterion~\citep[Alg.~3.1]{nocedal2006optimization}.

We emphasize that our modification only deviates from the usual line-search procedure when the gradient norm falls below an arbitrary threshold $\varepsilon > 0$.
In this regime, the selected step sizes become non-increasing.
This effectively ensures that the step sizes stabilize (that is, are asymptotically constant) if the sequence of iterates converges.
That is a key ingredient in our proof of Theorem~\ref{thm:modified-armijo}.

Let us also highlight that the gradient of $f$ need not be Lipschitz continuous.
Even if it is (with some constant $L$), the algorithm is allowed to entertain (and possibly select) arbitrarily large step sizes (in particular, larger than $2/L$).

\begin{theorem} \label{thm:modified-armijo}
	Let $f \colon \calM \to \reals$ be $C^2$ and assume that either of the following holds:
	\begin{enumerate}
		\item $\calM = \Rn$ and $\Retr_x(v) = x + v$; or (more generally)
		\item $\calM$ is a real-analytic manifold with a Riemannian metric and a retraction $\Retr \colon \T\calM \to \calM$ and, for all $x \in \calM$, the map $v \mapsto \Retr_x(v)$ is real analytic.
	\end{enumerate}
	Then, for all $\tau, r \in (0, 1)$ and almost all $\bar{\alpha} > 0$, Algorithm~\ref{algo:bkt-stab} avoids the strict saddles of $f$.
\end{theorem}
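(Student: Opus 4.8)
The plan is to follow the two-phase structure described in the ``Proof ideas'' section: a local-to-global argument built on top of the general template (Theorem~\ref{thm:saddle-avoidance-template}), applied not to a single iteration map but to the countable family $g_i^{\bar\alpha}(x) = \Retr_x(-\tau^i \bar\alpha\, \grad f(x))$, $i \ge 0$. The first step is to establish the \emph{stabilization lemma}: if the sequence $x_0, x_1, x_2, \dots$ produced by Algorithm~\ref{algo:bkt-stab} converges to a point $x^\ast$, then the accepted step sizes $\alpha_t$ are eventually constant, equal to some $\alpha^\ast = \tau^{i^\ast}\bar\alpha$. Here I would split on two cases. If $x^\ast$ is not a critical point, then $\|\grad f(x^\ast)\| > 0$; using continuity of $f$, $\grad f$, and $\Retr$, I would show that the Armijo condition at $x_t$ with a \emph{fixed} step size $\alpha$ is eventually satisfied for all $t$ large once $\alpha$ is small enough (a standard sufficient-decrease argument, uniform in a neighborhood of $x^\ast$), while the monotone non-increasing constraint on $\alpha_t$ prevents it from decreasing forever; hence $\alpha_t$ stabilizes. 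If $x^\ast$ \emph{is} a critical point, then $\grad f(x_t) \to 0$, the right-hand side $r\alpha_t\|\grad f(x_t)\|^2 \to 0$ faster than one might fear, and a Taylor/boundedness estimate on $f\circ\Retr$ near $x^\ast$ shows that any fixed step size that once worked keeps working; again the monotone constraint forces eventual constancy. The one genuine subtlety is ruling out $\alpha_t \to 0$ while $x_t \to x^\ast$, which is exactly where the ``stabilized'' modification does its work — without it the claim is false, as the footnoted $x^3$ example and the standard line-search both illustrate.

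Second, I would set up the global argument. Let $S$ be the set of strict saddles of $f$. By Lemma~\ref{lemma:Dgx-critical}, each strict saddle $x^\ast \in S$ is an unstable fixed point of \emph{every} map $g_i^{\bar\alpha}$ (since $\D g_i^{\bar\alpha}(x^\ast) = I - \tau^i\bar\alpha \Hess f(x^\ast)$ has an eigenvalue $> 1$), for every $\bar\alpha > 0$. Apply Theorem~\ref{thm:csmt-no-local-diffeomorphism} at each $x^\ast \in S$ using the eventual map $g_{i^\ast}^{\bar\alpha}$ — but since we do not know $i^\ast$ in advance, I apply it for \emph{each} $i \ge 0$, obtaining for each pair $(x^\ast, i)$ a neighborhood $B_{x^\ast, i}$ and a measure-zero set $\Wcsloc(x^\ast, i)$. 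By Lindelöf, for each fixed $i$ extract a countable subcover $\{B_{x_k^\ast, i}\}_k$ of $S$. Now suppose Algorithm~\ref{algo:bkt-stab} with initial step $\bar\alpha$ converges to some $x^\ast \in S$. By the stabilization lemma, from some time $m$ on the iteration is exactly $x_{t+1} = g_{i^\ast}^{\bar\alpha}(x_t)$, and the iterates eventually stay in some $B_{x_k^\ast, i^\ast}$; the CSMT conclusion gives $g_{i^\ast}^{\bar\alpha}{}^{\,t-m'}(x_{m'}) \in \Wcsloc(x_k^\ast, i^\ast)$ for all large $t$, hence $x_{m'} \in \Wcsloc(x_k^\ast, i^\ast)$ for some finite $m' \ge m$. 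Pulling back through the finitely many \emph{earlier} maps — each of which is one of the countably many $g_i^{\bar\alpha}$ — we conclude $x_0 \in g_{j_0}^{\bar\alpha}{}^{-1}\circ \cdots \circ g_{j_{m'-1}}^{\bar\alpha}{}^{-1}(\Wcsloc(x_k^\ast, i^\ast))$ for some finite word $(j_0, \dots, j_{m'-1})$ over the countable alphabet $\naturals$.

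Third — and this is the crux — I need each $g_i^{\bar\alpha}$ to have the Luzin $N^{-1}$ property, so that every such finite preimage of a measure-zero set is measure zero, and hence the union over the countably many finite words, countably many $k$, and countably many starting indices $i^\ast$ is measure zero; that union contains every bad $x_0$, proving avoidance for the fixed $\bar\alpha$. In the Euclidean case (1), $\D g_i^{\bar\alpha}(x) = I_n - \tau^i\bar\alpha\nabla^2 f(x)$ and Lemma~\ref{lemma:gd-luzin-almost-any-alpha} gives: for almost all $\beta \in \reals$, $g_\beta$ has Luzin $N^{-1}$. I would apply this with $\beta$ ranging over the countable set $\{\tau^i\bar\alpha : i \ge 0\}$: for this to land in the ``almost all'' set I need $\bar\alpha$ to avoid a countable union of measure-zero sets (one bad set per $i$, rescaled by $\tau^{-i}$), which is itself measure zero — hence the ``almost all $\bar\alpha$'' in the statement, and this is also why genericity of $\bar\alpha$ cannot be dropped (Remark~\ref{remark:line-search-almost-any-tight}). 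In the Riemannian case (2), Theorem~\ref{thm:analytic-saddle-avoidance} (more precisely its engine, Lemma~\ref{lemma:vertical-slices-Dg-singular-mfd} together with Lemma~\ref{lemma:vertical-horizontal-slices-mfds} and Theorem~\ref{thm:luzin-property}) gives exactly the analogous statement: for almost all $\beta$, $g_\beta$ has Luzin $N^{-1}$; again restrict to $\beta \in \{\tau^i\bar\alpha\}$ and discard the measure-zero set of bad $\bar\alpha$. The main obstacle, as flagged above, is the stabilization lemma — proving that convergence of the iterates forces the step size to become constant (rather than merely bounded or slowly decaying) — since everything downstream is a careful but routine assembly of the template theorem, the countable-family bookkeeping, and the already-established Luzin genericity results. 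A secondary point requiring care is that the CSMT neighborhoods $B_{x^\ast, i}$ depend on $i$, so the Lindelöf extraction must be done per $i$ and the final union must range over $i$ as well; this is harmless since all index sets involved are countable.
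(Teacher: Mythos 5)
Your proposal is correct and follows essentially the same route as the paper: eventual stabilization of the step size along a convergent trajectory, the Luzin $N^{-1}$ property of all maps $g_{(i)}(x) = \Retr_x(-\tau^i\bar\alpha\,\grad f(x))$ for generic $\bar\alpha$ via the countable union of rescaled bad sets, local control from the CSMT (which the paper packages through Theorem~\ref{thm:analytic-saddle-avoidance} rather than re-running the Lindel\"of argument inline, as you do), and the countable pullback of measure-zero preimages to reach $x_0$. The only soft spot is your stabilization step: the critical/non-critical case split is unnecessary and ``any step size that once worked keeps working'' is not the right statement --- the paper instead uses compactness of a small ball around the limit together with the local Lipschitz-type bound on $f\circ\Retr$ (Eq.~\eqref{eq:LipschitzlikeinproofABT}) to show that \emph{every} step size below $2(1-r)/L$ is accepted there, so the monotone, discrete step sizes are bounded below by $\min(\bar\alpha, 2\tau(1-r)/L)$ and hence eventually constant; this uniform sufficient-decrease argument (which you state only for the non-critical case) applies verbatim when the limit is a critical point, which is the case that actually matters.
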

\begin{proof}
	If $f$ has no strict saddles, the theorem is trivially true.
	Thus, assume $f$ has at least one strict saddle point.
	The first case ($\calM = \Rn$ with $\Retr_x(v) = x+v$) is a special case of the second one, hence we prove the latter.

	Iterate $x_{t+1}$ is obtained from $x_t$ by a gradient step with step size $\alpha_t$.
	Each step size $\alpha_t$ belongs to the discrete set $\{ \bar{\alpha}, \tau \bar{\alpha}, \tau^2 \bar{\alpha}, \ldots \}$.
	Accordingly, define the iteration maps $g_{(i)} \colon \calM \to \calM$ for $i = 0, 1, 2, \ldots$ corresponding to (R)GD with step sizes $\alpha_{(i)} = \tau^i \bar{\alpha}$, that is, $g_{(i)}(x) = \Retr_x(-\tau^i \bar{\alpha} \, \grad f(x))$.

	It follows from Theorem~\ref{thm:analytic-saddle-avoidance} that, for \emph{almost all} $\bar{\alpha}$, the iteration maps $g_{(i)}$ \emph{all} have the Luzin $N^{-1}$ property.
	Indeed, the theorem says $g_\alpha(x) = \Retr_x(-\alpha \grad f(x))$ has the stated property for all $\alpha$ except those in a measure zero set $A \subset \reals$; thus, $g_{(i)}$ has the property provided $\tau^i \bar{\alpha}$ is not in $A$; this holds for all $i$ as long as $\bar{\alpha}$ is not in $A \cup \tau^{-1} A \cup \tau^{-2} A \cup \cdots$, which is still a measure zero set.
	Going forward, consider this to be the case.
	Under the same conditions, Theorem~\ref{thm:analytic-saddle-avoidance} provides for each map $g_{(i)}$ a measure zero set $W_i$ with the following property: if $\lim_{t \to \infty} g_{(i)}^t(x)$ is a strict saddle point of $f$, then $x$ is in $W_i$.

	Assume the iterates $\{ x_t \}_{t \geq 0}$ generated by Algorithm~\ref{algo:bkt-stab} converge to a strict saddle point of $f$.
	We aim to show that $x_0$ belongs to a measure zero set.
	The proof proceeds in two main steps.
	First, we show that the step sizes selected by Algorithm~\ref{algo:bkt-stab} eventually stabilize.
	This allows us to apply the CSMT to each iteration map $g_{(i)}$, one of which corresponds to the stabilized step size: this provides local control. 
	Second, we use the Luzin $N^{-1}$ property of all iteration maps to enable global control.


	Let $x^\ast := \lim_{t \to \infty} x_t$ denote the strict saddle.
	Since $x_t$ converges to $x^\ast$, there exists some index $T \geq 0$ such that all subsequent iterates $x_t$ with $t \geq T$ remain in a closed ball $B_\rho(x^\ast)$ of radius $\rho > 0$ around $x^\ast$, where $\rho$ is chosen small enough so that:
	(a) the ball is compact (for example, $\rho$ strictly less than the injectivity radius of $\calM$ at $x^*$), and
	(b) $\| \grad f(x) \| < \varepsilon$ for all $x \in B_{\rho}(x^\ast)$. 
	
	We now show that for iterations $t \geq T$, the step sizes are lower-bounded, hence only a finite number is used.
	This follows since the gradient of $f$ is \emph{locally} Lipschitz continuous.
	Consider the compact set
	\begin{equation*}
		\calT = \Big\{ (x, s) \in \T \calM \mid x \in B_\rho(x^\ast) \text{ and } \| s \| \leq \bar{\alpha} \| \grad f(x) \| \Big\}.
	\end{equation*}
	There exists $L > 0$ such that for all $(x, s) \in \calT$ it holds that 
	\begin{align}
		f(\Retr_x(s)) \leq f(x) + \langle s, \grad f(x) \rangle	+ \frac{L}{2} \| s \|^2.
		\label{eq:LipschitzlikeinproofABT}
	\end{align}
	(See for example \citep[Lem.~10.57]{boumal2020intromanifolds}; in the Euclidean case, $L$ is a local Lipschitz constant for the gradient.)
	Therefore, the step sizes selected by Algorithm~\ref{algo:bkt-stab} satisfy the lower bound
	\begin{align*}
		\alpha_t \geq 
		\min\!\Bigg( \bar{\alpha}, \frac{2\tau (1-r)}{L} \Bigg) \eqqcolon \alpha_{\min},  && \textrm{ for all } t \geq T.
	\end{align*}
	(See for example \citep[Lem.~4.12]{boumal2020intromanifolds}; this is because any step size smaller than $\frac{2(1-r)}{L}$ satisfies the Armijo condition due to~\eqref{eq:LipschitzlikeinproofABT}.)
	This implies that only finitely many step sizes in the interval $[\alpha_{\min}, \bar{\alpha}]$ are used, and hence at least one step size is used infinitely many times.
	
	Additionally, for $t \geq T$, the step sizes are non-increasing.
	Indeed, since all iterates $x_t$ remain in $B_\rho(x^\ast)$ for all $t \geq T$, and $\| \grad f(x) \| < \varepsilon$ for all $x \in B_\rho(x^\ast)$, the mechanism in line~\ref{line:non-increasing-step} of Algorithm~\ref{algo:bkt-stab} ensures that the first step size considered when determining $\alpha_{t+1}$ is $\alpha_t$, rather than $\bar{\alpha}$.
	As a result, we have $\alpha_t \geq \alpha_{t+1}$ for all $t \geq T$.

	Therefore, since the set of allowable step sizes is finite and the sequence is eventually non-increasing, the sequence must eventually become constant.
	Thus, there exist an exponent $i \geq 0$ and some index $K \geq T$ such that $x_{t+1} = g_{(i)}(x_t)$ for all $t \geq K$.
	In particular,
	\begin{align*}
		\lim_{t \to \infty} g_{(i)}^t(x_K) = \lim_{t\to\infty} x_t = x^*.
	\end{align*}
	Since $x^\ast$ is a strict saddle of $f$, it follows from the discussion above that $x_K$ belongs to the measure zero set $W_i$.

	Of course, $i$ and $K$ may depend on $x_0$ (among other things).
	To remove the dependency on $i$, relax the statement to: $x_K$ belongs to $Z_0 := W_0 \cup W_1 \cup W_2 \cup \cdots$, which is also measure zero.
	Since $x_K$ was obtained from $x_{K-1}$ by applying one of the iteration maps $g_{(j)}$, and since all iteration maps have the Luzin $N^{-1}$ property, it follows that $x_{K-1}$ also belongs to a measure zero set, namely, $Z_1 := \bigcup_{j \geq 0} \, g_{(j)}^{-1}(Z_0)$ (where the union over $j$ accounts for the fact that we do not know $j$).
	Iterate this reasoning $K$ times to find a measure zero set $Z_K$ which contains $x_0$.
	Explicitly:
	\begin{align*}
		Z_0 = \bigcup_{i \geq 0} W_i, && \textrm{ and } &&
		Z_{k+1} = \bigcup_{j \geq 0} g_{(j)}^{-1}(Z_k), \textrm{ for } k \geq 0.
	\end{align*}
	Since $K$ is unknown, relax the statement once again to: $x_0$ belongs to $Z := Z_0 \cup Z_1 \cup Z_2 \cup \cdots$.
	Each $Z_k$ is measure zero, hence $Z$ is measure zero: the proof is complete.
\end{proof}

\begin{remark} \label{remark:line-search-almost-any-tight}
	The statement ``for almost all $\bar{\alpha}$'' in Theorem~\ref{thm:modified-armijo} cannot be relaxed to \emph{all} $\bar{\alpha}$.
	To see this, consider again the function $f$ built in Remark~\ref{remark:luzin-not-removable}.
	It easily follows from the discussion in Remark~\ref{remark:euclidean-gd-counterexample} that running Algorithm~\ref{algo:bkt-stab} on this function with $\bar{\alpha} = 1$ and any $x_0$ such that $\|x_0\| > 2$ leads to $x_1 = 0$ (because the step size $\alpha_0 = \bar{\alpha}$ is accepted), that is, the algorithm converges to a strict saddle point of $f$ in one iteration, and it does so for a large set of initial points $x_0$.
\end{remark}

\begin{theorem} \label{thm:soc-convergence}
	Under the same assumptions as Theorem~\ref{thm:modified-armijo}, and assuming additionally that $ f $ is real analytic, or, more generally, that it satisfies the Kurdyka--{\L}ojasiewicz property, the following holds: for all $ \tau, r \in (0, 1) $, all $ \bar{\alpha} > 0 $ and all $x_0 \in \calM $, the iterates produced by Algorithm~\ref{algo:bkt-stab} either leave every compact subset of $ \calM $, or converge to a point $ x^\ast \in \calM $.
	
	Moreover, for all $\tau, r \in (0, 1)$, and for almost all $\bar{\alpha} > 0$ and almost every $x_0 \in \calM$, if the limit point $x^\ast$ exists, it satisfies $\grad f(x^\ast) = 0$ and $\Hess f(x^\ast) \succeq 0$.
\end{theorem}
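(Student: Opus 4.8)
The plan is to treat the two assertions separately. The first (global behavior: either escape to infinity or convergence to a point) is a Kurdyka--{\L}ojasiewicz (KL) argument in the spirit of \citep{absil2005iteratesanalytic}, transported to the Riemannian setting; the second (generic first- and second-order stationarity of the limit) then follows by combining that convergence with the already-established Theorem~\ref{thm:modified-armijo}.

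\emph{First assertion.} Fix $\bar\alpha > 0$ and $x_0 \in \calM$; since $f$ is $C^2$, each line-search terminates, so the iterates are well defined, and the accepted Armijo condition makes $f(x_t)$ non-increasing. Suppose the iterates do not leave every compact subset of $\calM$. Then some compact set contains $x_t$ for infinitely many $t$, hence $\{x_t\}$ has a cluster point $x^\ast$, and by continuity $f(x_t) \downarrow f(x^\ast)$. I would then verify that Algorithm~\ref{algo:bkt-stab}, along the tail of the trajectory, satisfies the two conditions of the abstract descent framework: the \emph{stationarity} condition $\grad f(x_t) = 0 \Rightarrow x_{t+1} = x_t$ (immediate, since then the while-loop body does not execute and $\Retr_{x_t}(0) = x_t$); and the \emph{primary descent} condition $f(x_t) - f(x_{t+1}) \geq \sigma\, \|\grad f(x_t)\|\, \dist(x_t, x_{t+1})$ for some $\sigma > 0$. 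For the latter, once $x_t$ lies in a compact ball $B_\rho(x^\ast)$ the retracted vectors $-\alpha_t \grad f(x_t)$ have uniformly bounded norm because $\alpha_t \leq \bar\alpha$, so $\dist(x_t, x_{t+1}) \leq c\, \alpha_t \|\grad f(x_t)\|$ with $c$ depending only on $\Retr$ and the compact region; combined with the Armijo bound $f(x_t) - f(x_{t+1}) \geq r \alpha_t \|\grad f(x_t)\|^2$ this gives the claim with $\sigma = r/c$. The KL property at $x^\ast$ (valid because $f$ is real analytic, hence satisfies a {\L}ojasiewicz gradient inequality near $x^\ast$ in a chart, or by assumption) then yields the standard finite-length estimate $\sum_t \dist(x_t, x_{t+1}) < \infty$, so $\{x_t\}$ is Cauchy and therefore converges to $x^\ast$.

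\emph{Second assertion.} Assume the limit $x^\ast = \lim_t x_t$ exists. I would first show, with no genericity, that $x^\ast$ is a critical point: by the reasoning already used in the proof of Theorem~\ref{thm:modified-armijo}, there exist $T$ and $\alpha_{\min} > 0$ with $\alpha_t \geq \alpha_{\min}$ for all $t \geq T$; if $\grad f(x^\ast) \neq 0$ then $\|\grad f(x_t)\| \geq \varepsilon > 0$ for large $t$ by continuity, whence $f(x_t) - f(x_{t+1}) \geq r \alpha_{\min} \varepsilon^2$ infinitely often, contradicting convergence of the non-increasing sequence $f(x_t)$. Hence $\grad f(x^\ast) = 0$. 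Now I invoke Theorem~\ref{thm:modified-armijo}: for almost all $\bar\alpha > 0$, the set of initial points $x_0$ from which Algorithm~\ref{algo:bkt-stab} converges to a strict saddle has measure zero. Therefore, for almost all $\bar\alpha$ and almost every $x_0$, if the limit $x^\ast$ exists it is not a strict saddle; since $x^\ast$ is critical and $\Hess f(x^\ast)$ is self-adjoint, ``not a strict saddle'' is exactly $\Hess f(x^\ast) \succeq 0$.

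\emph{Main obstacle.} The delicate part is the first assertion: matching the Armijo-type line-search to the hypotheses of the {\L}ojasiewicz convergence machinery, especially securing the primary descent condition \emph{uniformly} near the cluster point with a constant that does not degenerate --- this is where the a priori bound $\alpha_t \leq \bar\alpha$ and compactness of the relevant region enter --- and carrying the $\Rn$ argument of \citep{absil2005iteratesanalytic} over to a manifold. Once convergence is in hand, the second assertion is essentially bookkeeping: a convergent trajectory lands at a critical point (via the step-size lower bound from the proof of Theorem~\ref{thm:modified-armijo}), and Theorem~\ref{thm:modified-armijo} rules out strict saddles for generic $\bar\alpha$ and $x_0$.
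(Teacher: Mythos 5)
Your proposal is correct and follows essentially the same route as the paper's own (sketched) proof: the first assertion is handled by verifying the strong descent conditions of \citet{absil2005iteratesanalytic} for the Armijo iteration on a compact neighborhood (the paper cites their Thm.~4.1 and Thm.~3.2 where you verify the conditions directly and re-run the KL finite-length argument), and the second assertion is obtained exactly as in the paper via the local step-size lower bound to get $\grad f(x^\ast)=0$ and Theorem~\ref{thm:modified-armijo} to exclude strict saddles for generic $\bar{\alpha}$ and $x_0$.
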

\begin{proof}[Sketch of proof]
	We first adapt the argument from \cite{absil2005iteratesanalytic} to the Riemannian setting. 
	Since the proof is local in nature, the extension from the Euclidean case is straightforward.
	Using \cite[Thm~4.1]{absil2005iteratesanalytic}, we can show that Algorithm~\ref{algo:bkt-stab} satisfies the strong descent conditions of \cite[Def.~3.1]{absil2005iteratesanalytic}.
	By \cite[Thm.~3.2]{absil2005iteratesanalytic}, it then follows that either the iterates leave every compact subset of $\calM$, or they converge to a single limit point $x^\ast \in \calM$.
	
	Assume the latter.
	Then, there exists some compact set $\calK \subseteq \calM$ such that the sequence $\{ x_k \}_{k \geq 0}$ remains in $\calK$ at all times. 
	By \cite[Lem.~10.57]{boumal2020intromanifolds}, the set $\calT = \{ (x, v) \in \T \calM \mid x \in \calK, \| v \| \leq \bar{\alpha} \| \grad f(x) \| \}$ is compact in $\T \calM$ and there exists a constant $L$ such that the inequality from Eq.~\eqref{eq:LipschitzlikeinproofABT} holds for all $(x, s) \in \calT$.
	This implies that the step sizes $\alpha_t$ are uniformly lower bounded \cite[Lem.~4.12]{boumal2020intromanifolds}. 
	It follows that $\lim_{k \to \infty} \| \grad f(x_k) \| = 0$ \cite[Cor.~4.13]{boumal2020intromanifolds}, which shows that $\grad f(x^\ast) = 0$.
	Finally, from Theorem~\ref{thm:modified-armijo}, we obtain that for almost all $\bar{\alpha} > 0$ and almost any initial $x_0$, the limit point $x^\ast$ satisfies $\Hess f(x^\ast) \succeq 0$.
\end{proof}

\section{RGD with constant step size: exponential retraction} \label{section:additional-results-c2}

We now return to Riemannian gradient descent with a \emph{constant step size}, motivated by two key considerations.
First, when $\calM$ is \emph{not real analytic}, the results above no longer apply.
Second, while the earlier analysis shows that \eqref{eq:RGD} avoids strict saddles for \emph{almost all} step sizes, it does not provide an \emph{explicit interval} of acceptable step sizes.
In contrast, in the Euclidean setting, if $\nabla f$ is $L$-Lipschitz continuous, then \emph{any} step size $\alpha \in (0, 1/L)$ yields saddle avoidance.

In this section, we provide guarantees for \eqref{eq:RGD} with all step sizes in an interval, in the case when the retraction is the \emph{exponential map}~\citep[Ch.~5]{lee2018riemannian}.
The exponential map at a point $x \in \calM$, denoted by $\Exp_x \colon \T_x \calM \to \calM$, is defined as $\Exp_x(v) = \gamma_v(1)$, where $\gamma_v$ is the unique geodesic starting at $x$ with initial velocity $v \in \T_x \calM$.
The RGD iteration map $g \colon \calM \to \calM$ is then
\begin{equation} \tag{ExpRGD} \label{eq:ExpRGD}
	g(x) = \Exp_x(- \alpha \grad f(x)).
\end{equation}
We assume throughout this section that $\calM$ is complete, that is, $\Exp_x$ is defined on the whole tangent space $\T_x \calM$ (see Remark~\ref{remark:exp-real-analytic}).

As shown in Corollary~\ref{cor:rgd-template-theorem}, strict saddle avoidance holds if $g$ satisfies the Luzin $N^{-1}$ property, which by Theorem~\ref{thm:luzin-property} is equivalent to $\D g(x)$ being rank deficient only on a measure zero set.
Since $g$ involves the Riemannian gradient of $f$, the differential $\D g(x)$ depends on the Riemannian Hessian $\Hess f(x)$.
To control it, as usual, we assume that $f$ is $C^2$ and that $\grad f$ is $L$-Lipschitz continuous.
The latter can be stated as a bound on the Hessian.

\begin{definition} \label{prop:grad-L-lipschitz-alternative}
	Let $f \colon \calM \rightarrow \reals$ be $C^2$.
	The vector field $\grad f$ is \emph{$L$-Lipschitz continuous} if 
	\begin{align*}
		\| \Hess f(x) [u] \| \leq L \| u \| && \textrm{ for all } x \in \calM, u \in \T_x \calM.
	\end{align*}
	(See \citep[Cor.~10.48]{boumal2020intromanifolds} for alternative characterizations.)
\end{definition}

Before moving forward with our saddle avoidance results, we take a necessary detour to study $\D g(x)$.
When $x$ is a critical point of the cost function, the situation is straightforward, as demonstrated in the following lemma.
\begin{lemma} \label{lem:Dg-not-singular-critical-point}
	Let $f \colon \calM \to \reals$ be $C^2$ and assume that $\grad f$ is $L$-Lipschitz continuous. 
	If $x^\ast$ is a critical point of $f$ then the differential $\D g(x^\ast)$ with $0 \leq \alpha < 1/L$ is invertible.
\end{lemma}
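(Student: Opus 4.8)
The statement is an immediate consequence of the already-established formula for the differential of the iteration map at a critical point, together with the Hessian bound encoded in $L$-Lipschitz continuity of $\grad f$. The first step is to observe that the exponential map is a retraction (with domain all of $\T\calM$ by the completeness assumption of this section), so Lemma~\ref{lemma:Dgx-critical} applies verbatim to $g(x) = \Exp_x(-\alpha \grad f(x))$. Since $x^\ast$ is a critical point of $f$, this gives
\begin{equation*}
	\D g(x^\ast) = I - \alpha \Hess f(x^\ast).
\end{equation*}

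Next I would control the spectrum of $\Hess f(x^\ast)$. By Definition~\ref{prop:grad-L-lipschitz-alternative}, $L$-Lipschitz continuity of $\grad f$ means $\|\Hess f(x^\ast)[u]\| \leq L\|u\|$ for all $u \in \T_{x^\ast}\calM$, i.e.\ the operator norm of $\Hess f(x^\ast)$ is at most $L$. Because $f$ is $C^2$, the Riemannian Hessian at any point is a self-adjoint linear operator on the tangent space, so all of its eigenvalues are real and lie in the interval $[-L, L]$.

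Finally, if $\lambda$ is an eigenvalue of $\Hess f(x^\ast)$, then $1 - \alpha\lambda$ is the corresponding eigenvalue of $\D g(x^\ast)$, and $1 - \alpha\lambda \geq 1 - \alpha L$. For $0 \leq \alpha < 1/L$ we have $\alpha L < 1$, hence $1 - \alpha L > 0$, so every eigenvalue of $\D g(x^\ast)$ is strictly positive; in particular $\D g(x^\ast)$ has no zero eigenvalue and is therefore invertible. I expect no real obstacle here: the only points requiring a word of care are that the exponential map indeed meets the definition of a retraction so that Lemma~\ref{lemma:Dgx-critical} is applicable, and that self-adjointness of $\Hess f(x^\ast)$ is what turns the operator-norm bound into a two-sided bound on the spectrum.
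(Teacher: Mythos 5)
Your proof is correct and follows essentially the same route as the paper: both invoke Lemma~\ref{lemma:Dgx-critical} to write $\D g(x^\ast) = I - \alpha \Hess f(x^\ast)$ at the critical point and then use $\alpha L < 1$ to conclude invertibility. The only cosmetic difference is in the last step: the paper uses the one-line norm estimate $\|u - \alpha \Hess f(x^\ast)[u]\| \geq (1 - \alpha L)\|u\| > 0$ for nonzero $u$, which does not even require self-adjointness of the Hessian, whereas you pass through the spectrum of the self-adjoint operator; both arguments are valid.
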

\begin{proof}
	If $x^\ast$ is a critical point of $f$, we know from Lemma~\ref{lemma:Dgx-critical} that $\D g(x^\ast) = I - \alpha \Hess f(x^\ast)$.
	Thus, for all nonzero $u \in \T_{x^\ast} \calM$ it holds that $\|\D g(x^\ast)[u]\| = \|u - \alpha \Hess f(x^\ast)[u]\| \geq \|u\| - \alpha L \|u\|> 0$, that is, $\D g(x^\ast)[u]$ is nonzero.
\end{proof}

When $x$ is not a critical point, the computations are markedly more involved.
In the next section, we introduce the necessary tools to analyze the differential of the iteration map at non-critical $x$.

\subsection{Jacobi fields and the differential of the iteration map}
\label{sec:JacobiDgx}

We need to study $\D g(x)$ with $g$ as in~\eqref{eq:ExpRGD}.
This requires us to differentiate through the Riemannian exponential.
\emph{Jacobi fields} are ideal for this purpose.

A Jacobi field is a vector field along a geodesic that satisfies a certain second-order differential equation involving the curvature of the manifold~\citep[Ch.~10]{lee2018riemannian}.
Intuitively, these fields describe infinitesimal variations of geodesics, making them a natural tool to access the differential of the exponential map.
We begin by reviewing concepts from Riemannian geometry, and gradually build toward an expression for $\D g(x)$ stated in Lemma~\ref{lemma:Dgx}.

\begin{definition} \cite[Ch.~6]{lee2018riemannian}
	Let $\calM$ be a Riemannian manifold.
	The \emph{injectivity radius at a point} $x \in \calM$, denoted by $\inj(x)$, is the supremum of all $r > 0$ such that $\Exp_x$ is defined and is a diffeomorphism from the ball $B_r(x) = \{ v \in \T_x \calM \mid \| v \| < r \}$ onto its image. 

	The \emph{injectivity radius of the manifold} $\calM$ is $\inj(\calM) = \inf_{x \in \calM} \inj(x)$.
\end{definition}

\begin{definition} \cite[Ch.~10]{lee2018riemannian}
	Let $\calM$ be a Riemannian manifold and let $(x, v) \in \T \calM$.
	Define the geodesic segment $\gamma \colon [0, 1] \rightarrow \calM$ by $\gamma(t) = \Exp_x(tv)$ and let $y = \gamma(1)$. 
	We say that \emph{$y$ is conjugate to $x$ along $\gamma$} if $v$ is a critical point of $\Exp_x$, that is, if $\D\Exp_x(v)$ is singular.
\end{definition}

The following lemma is an immediate consequence of these definitions.
\begin{lemma} \label{lemma:no-conjugate-points-inj}
	Let $\calM$ be a Riemannian manifold and let $(x, v) \in \T \calM$.
	Define $\gamma \colon [0, 1] \rightarrow \calM$, $\gamma(t) = \Exp_x(t v)$.
	Then, if $\| v \| < \inj(x)$, there are no conjugate points along $\gamma$.
\end{lemma}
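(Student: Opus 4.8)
The plan is to unwind the definitions. Saying that $\gamma$ has \emph{no conjugate points} means that for every $s \in [0,1]$ the point $\gamma(s)$ is not conjugate to $x$ along the sub-geodesic joining them; since that sub-geodesic is $t \mapsto \Exp_x(t\, sv)$, $t\in[0,1]$, with initial velocity $sv$ and endpoint $\Exp_x(sv) = \gamma(s)$, the definition of conjugacy says this is equivalent to $\D\Exp_x(sv)$ being nonsingular. So the whole claim reduces to: $\D\Exp_x(sv)$ is invertible for all $s \in [0,1]$ (the case $s=0$ being trivial, as $\D\Exp_x(0) = \Id$).

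First I would invoke the hypothesis $\|v\| < \inj(x)$ together with the definition of the injectivity radius to pick a radius $r$ with $\|v\| < r \le \inj(x)$ for which $\Exp_x$ restricts to a diffeomorphism from $B_r(x)$ onto its image. This is possible because the collection of admissible radii is downward closed (if $\Exp_x|_{B_r(x)}$ is a diffeomorphism then so is its restriction to any smaller ball) and has supremum $\inj(x) > \|v\|$; concretely one may take $r = \tfrac12(\|v\| + \inj(x))$ when $\inj(x) < \infty$ and any $r > \|v\|$ otherwise.

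Next I would note that a diffeomorphism has everywhere-invertible differential: applying the chain rule to $\Exp_x^{-1} \circ \Exp_x = \Id$ on $B_r(x)$ shows $\D\Exp_x(w)$ is invertible for every $w \in B_r(x)$. Finally, for each $s \in [0,1]$ we have $\|sv\| = s\|v\| \le \|v\| < r$, so $sv \in B_r(x)$ and hence $\D\Exp_x(sv)$ is invertible; by the reformulation in the first step this is precisely the absence of conjugate points along $\gamma$. There is essentially no obstacle here: the only point requiring a little care is that the supremum defining $\inj(x)$ need not be attained, which is why one works with a strictly smaller radius $r$ that still exceeds $\|v\|$ rather than with $\inj(x)$ itself.
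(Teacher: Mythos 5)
Your argument is correct and is exactly the ``immediate consequence of the definitions'' that the paper has in mind (it offers no written proof): since $\|v\| < \inj(x)$, some admissible radius $r > \|v\|$ exists on whose ball $\Exp_x$ is a diffeomorphism, so $\D\Exp_x(sv)$ is invertible for all $s \in [0,1]$, which is precisely the absence of conjugate points under the paper's definition. Your care in choosing $r$ strictly below the (possibly unattained) supremum $\inj(x)$ is the right touch; nothing is missing.
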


Later, we will relate the differential $\D g(x)$ to the Hessian of a certain squared distance function.
For this Hessian to be well defined, we first need to ensure that the squared distance function is indeed smooth.
To this end, we use the following lemma.
\begin{lemma} \cite[Cor.~10.25]{boumal2020intromanifolds} \label{lemma:dist-smooth}
	Let $\calM$ be a Riemannian manifold.
	The map $(x, v) \mapsto (x, \Exp_x(v))$ is a diffeomorphism from $\{ (x, v) \in \T\calM \mid \|v\| < \inj(x) \}$ onto its image, namely, $\{ (x, y) \in \calM \times \calM \mid \dist(x, y) < \inj(x) \}$.
	Its smooth inverse is the map
	\begin{equation*}
		(x, y) \mapsto (x, \Log_x(y)),
	\end{equation*}
	where $\Log_x$ (the \emph{Riemannian logarithm}) serves as an inverse of $\Exp_x$ on the stated domains.

	In particular, the squared distance function to $y \in \calM$, defined by $x \mapsto \dist(x, y)^2 = \| \Log_x(y) \|^2$, is smooth on $\{ x \in \calM \mid \dist(x, y) < \inj(x) \}$.
\end{lemma}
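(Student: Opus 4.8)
The plan is to establish the first assertion---that $E \colon (x, v) \mapsto (x, \Exp_x(v))$ restricts to a diffeomorphism from $\mathcal{D} = \{ (x, v) \in \T\calM \mid \|v\| < \inj(x) \}$ onto $\mathcal{I} = \{ (x, y) \in \calM \times \calM \mid \dist(x, y) < \inj(x) \}$ with inverse $(x, y) \mapsto (x, \Log_x(y))$---and then to deduce the smoothness of $x \mapsto \dist(x,y)^2$ by composition. (This is precisely \cite[Cor.~10.25]{boumal2020intromanifolds}, so one may also simply cite it; here I sketch the argument.)

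First I would dispatch the two easy properties. Smoothness of $E$ on its domain follows because the geodesic flow on $\T\calM$ depends smoothly on initial conditions. Injectivity of $E$ on $\mathcal{D}$ is fiberwise: if $E(x_1,v_1) = E(x_2,v_2)$ then $x_1 = x_2 =: x$ and $\Exp_x(v_1) = \Exp_x(v_2)$, and $\Exp_x$ is injective on $\{ v \in \T_x\calM \mid \|v\| < \inj(x) \}$ by the very definition of the injectivity radius, so $v_1 = v_2$.

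Next I would argue that $E$ is a local diffeomorphism on $\mathcal{D}$. Since the first component of $E$ is the bundle projection $\pi \colon \T\calM \to \calM$, any $\xi \in \ker \D E(x,v)$ satisfies $\D\pi(x,v)[\xi] = 0$, i.e.\ $\xi$ is vertical; the restriction of $\D E(x,v)$ to the vertical subspace (canonically identified with $\T_x\calM$) is the fiber derivative $\D(\Exp_x)(v)$, which is invertible because $\|v\| < \inj(x)$ precludes conjugate points along $t \mapsto \Exp_x(tv)$ by Lemma~\ref{lemma:no-conjugate-points-inj}. Hence $\D E(x,v)$ is injective, and since domain and codomain both have dimension $2\dim\calM$, it is an isomorphism. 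A smooth injective local diffeomorphism is a diffeomorphism onto its (open) image, and its inverse is $(x,y) \mapsto (x, \Log_x(y))$ by the definition of $\Log_x$ as the inverse of $\Exp_x$ on these domains.

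It then remains to identify the image $E(\mathcal{D})$ with $\mathcal{I}$ and to conclude. For $E(\mathcal{D}) \subseteq \mathcal{I}$, I would use that the radial geodesic $t \mapsto \Exp_x(tv)$ is minimizing when $\|v\| < \inj(x)$ (Gauss lemma together with the standard fact that geodesics shorter than the injectivity radius realize the Riemannian distance), so $\dist(x, \Exp_x(v)) = \|v\| < \inj(x)$; for the reverse inclusion, if $\dist(x,y) < \inj(x)$ then a minimizing geodesic from $x$ to $y$ exhibits $y = \Exp_x(v)$ with $\|v\| = \dist(x,y) < \inj(x)$. Finally, fixing $y \in \calM$ and restricting to $\{ x \in \calM \mid \dist(x,y) < \inj(x) \}$, the smooth map $x \mapsto (x,y)$ takes values in $\mathcal{I} = E(\mathcal{D})$, so $x \mapsto (x, \Log_x(y)) = E^{-1}(x,y)$ is smooth, and hence so is $x \mapsto \|\Log_x(y)\|^2 = \dist(x,y)^2$ as a composition with the smooth squared-norm function on $\T\calM$. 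The steps I expect to require the most care are the local-diffeomorphism argument (handling the vertical subspace of $\T(\T\calM)$ cleanly, as above) and invoking exactly the right minimality statement for short geodesics when identifying the image; both are classical and rest on the Gauss lemma and the continuity of $\inj(\cdot)$.
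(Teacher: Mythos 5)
Your sketch is correct; the paper offers no proof of its own here---it simply cites \citep[Cor.~10.25]{boumal2020intromanifolds}---and your argument is precisely the standard one behind that citation (fiberwise injectivity from the definition of $\inj$, invertibility of $\D\Exp_x(v)$ from the absence of conjugate points to get a local diffeomorphism, and minimality of short radial geodesics to identify the image and conclude by composition). The only point left implicit, which you rightly flag at the end, is that $\{(x,v) \in \T\calM \mid \|v\| < \inj(x)\}$ is open (lower semicontinuity of $\inj(\cdot)$), which is needed for the domain and image to be open sets on which the word ``diffeomorphism'' applies.
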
	

We now return to Jacobi fields.
These are solutions to a linear, second-order differential equation along a geodesic segment $\gamma \colon [0, 1] \to \calM$.
The space of solutions forms a $2n$-dimensional vector space, with $n = \dim \calM$.
Each solution $J$ is a vector field along $\gamma$ uniquely determined by its initial value $J(0) \in \T_{\gamma(0)}\calM$ and initial velocity $\D_t J(0) \in \T_{\gamma(0)}\calM$, where $\D_t$ denotes the covariant derivative with respect to $t$ along $\gamma$.
With this, we have a natural basis for decomposing any Jacobi field, as formalized next.

\begin{lemma}[{\protect{\citet[Prop.~14.30]{villani2009optimal}}}] \label{lemma:jacobi-fields-decomposition}
	Let $\calM$ be an $n$-dimensional Riemannian manifold and let $\gamma \colon [0, 1] \to \calM$ be a geodesic.
	Pick an arbitrary orthonormal basis $\{ e_1, \dots, e_n \}$ of $\T_{\gamma(0)} \calM$.
	Define the parallel-transported frame $E_i(t) = \mathrm{PT}_{t \leftarrow 0}^{\gamma} (e_i)$, forming an orthonormal
	basis of $\T_{\gamma(t)} \calM$ for all $t \in [0, 1]$.
	Let $J_0$ and $J_1$ be matrix-valued fields whose $n$ columns are Jacobi fields along $\gamma$,
	expressed in the basis $\{E_i(t)\}_{i=1}^n$, and satisfying the initial conditions:
	\begin{align*}
		J_0(0) = 0, &&
		\D_t J_0(0) = I_n, &&
		J_1(0) = I_n, &&
		\D_t J_1(0) = 0.
	\end{align*}
	Then, for any $t \in [0, 1]$, any Jacobi field $J(t)$ along $\gamma$ can be expressed as 
	\begin{equation*}
		J(t) = J_1(t) \; J(0) + J_0(t) \; \D_t J(0),
	\end{equation*}
	where the ``matrix product'' is understood through the coordinate frames.
\end{lemma}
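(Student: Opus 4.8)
The plan is to rewrite the Jacobi equation with respect to the parallel frame, turning it into a linear second-order ODE with continuous coefficients, and then to read off the decomposition from the linearity and uniqueness of solutions. Recall that a vector field $J$ along $\gamma$ is a Jacobi field precisely when
\[
	\D_t^2 J(t) + R\big( J(t), \gamma'(t) \big)\gamma'(t) = 0,
\]
where $R$ is the Riemann curvature tensor. Writing $J(t) = \sum_{i=1}^n a_i(t) E_i(t)$ in the parallel-transported orthonormal frame and collecting the coefficients into $a(t) = (a_1(t), \dots, a_n(t))\transpose$, the fact that the frame is parallel ($\D_t E_i(t) = 0$) means that $\D_t J(t)$ and $\D_t^2 J(t)$ have coordinate vectors $a'(t)$ and $a''(t)$. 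The curvature term is $C^\infty(\calM)$-linear in $J$, hence in this frame it is represented by the matrix $A(t) \in \reals^{n\times n}$ with entries $A(t)_{ji} = \langle R(E_i(t), \gamma'(t))\gamma'(t), E_j(t) \rangle$, which is smooth in $t$ on $[0,1]$. Thus $J$ is a Jacobi field if and only if $a''(t) + A(t) a(t) = 0$. This already shows the solution space is $2n$-dimensional: the map $J \mapsto (J(0), \D_t J(0)) \in \T_{\gamma(0)}\calM \times \T_{\gamma(0)}\calM$ is a linear isomorphism by existence and uniqueness for linear ODEs.

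Next I would verify the candidate. By construction each column of $J_0$ (resp.\ $J_1$) is a Jacobi field, so the matrix-valued functions $J_0, J_1$ themselves satisfy $X''(t) + A(t) X(t) = 0$ with $J_0(0) = 0$, $\D_t J_0(0) = I_n$, $J_1(0) = I_n$, $\D_t J_1(0) = 0$. Fix a Jacobi field $J$ with coordinate vector $a(t)$; since at $t = 0$ the frame $\{E_i(0)\}$ is the chosen basis $\{e_i\}$, the vectors $a(0)$ and $a'(0)$ are exactly the coordinate vectors of $J(0)$ and $\D_t J(0)$. Consider $b(t) := J_1(t)\, a(0) + J_0(t)\, a'(0)$. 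Any fixed linear combination of columns of $J_0$ and $J_1$ again solves the homogeneous linear system, so $b'' + A b = 0$; moreover $b(0) = J_1(0) a(0) + J_0(0) a'(0) = a(0)$ and $b'(0) = \D_t J_1(0)\, a(0) + \D_t J_0(0)\, a'(0) = a'(0)$.

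Finally, uniqueness of solutions to the linear ODE $x'' + A(t) x = 0$ with prescribed $(x(0), x'(0))$ on the compact interval $[0,1]$ gives $a(t) = b(t)$ for all $t$, which, translated back through the frame, is exactly $J(t) = J_1(t) J(0) + J_0(t) \D_t J(0)$, as claimed. I expect no genuine difficulty here: the whole argument rests on the elementary observation that in a parallel frame the covariant derivative $\D_t$ reduces to ordinary differentiation, so the Jacobi equation becomes a standard linear ODE. The only point deserving care is the bookkeeping of identifications — $J(0)$ and $\D_t J(0)$ are tangent vectors at $\gamma(0)$, and the ``matrix products'' $J_1(t) J(0)$ and $J_0(t) \D_t J(0)$ must be read as ``the vector field along $\gamma$ whose coordinate vector at time $t$ is $J_1(t)$ (resp.\ $J_0(t)$) applied to the coordinate vector of the given initial tangent vector'' — but this is precisely the convention already fixed in the statement.
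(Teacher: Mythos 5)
Your proof is correct. The paper does not prove this lemma at all---it simply cites \citet[Prop.~14.30]{villani2009optimal}---and your argument (express the Jacobi equation in the parallel-transported frame so that $\D_t$ becomes ordinary differentiation, reduce to the linear ODE $a'' + A(t)a = 0$, and conclude by linearity plus uniqueness of solutions with prescribed $(a(0), a'(0))$) is exactly the standard proof underlying the cited result, with the frame/coordinate bookkeeping handled correctly.
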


The next result allows to check invertibility of $J_0(t)$.
The proof relies on the link between Jacobi fields and the differential of the exponential map.

\begin{lemma} \label{lemma:J_0_invertible}
	Let $\calM$ be a Riemannian manifold and let $\gamma \colon [0, 1] \to \calM$ be a geodesic.
	Define the matrix-valued field $J_0$ as in Lemma~\ref{lemma:jacobi-fields-decomposition}.
	Then, $J_0(t)$ is invertible for all $t \in (0, 1]$ if and only if there are no conjugate points along $\gamma$.
\end{lemma}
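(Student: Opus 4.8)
The plan is to exploit the classical identification between the Jacobi field $J_0$ along $\gamma$ and the differential of the exponential map. Write $\gamma(t) = \Exp_x(tv)$ with $x = \gamma(0)$ and $v = \gamma'(0)$. For each fixed $t \in (0,1]$, I would show that the linear map $J_0(t) \colon \T_x\calM \to \T_{\gamma(t)}\calM$ (read through the parallel-transported frame) coincides, up to the nonzero scalar $t$ and an orthogonal change of frame, with $\D\Exp_x(tv)$. Concretely, the standard variation-of-geodesics argument (see \citep[Ch.~10]{lee2018riemannian}) gives that if $w \in \T_x\calM$ then $t \mapsto \D\Exp_x(tv)[tw]$ is the Jacobi field along $\gamma$ with initial value $0$ and initial covariant derivative $w$; comparing with the defining initial conditions $J_0(0)=0$, $\D_t J_0(0) = I_n$ yields $J_0(t)[w] = \tfrac{1}{t}\,\D\Exp_x(tv)[tw] = \D\Exp_x(tv)[w]$ once everything is expressed in the parallel frame $\{E_i(t)\}$.

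Granting that identification, the equivalence is immediate. By definition, $\gamma(t)$ is conjugate to $x$ along $\gamma$ exactly when $\D\Exp_x(tv)$ is singular, i.e.\ when $J_0(t)$ is singular. Hence $J_0(t)$ is invertible for all $t \in (0,1]$ if and only if $\gamma(t)$ is not conjugate to $x$ for any $t \in (0,1]$, which is precisely the statement that there are no conjugate points along $\gamma$. (At $t=0$ one has $J_0(0)=0$, which is why the interval is open at the left endpoint; this matches the fact that $\Exp_x$ is always a local diffeomorphism near the origin.)

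I expect the main obstacle to be stating the $J_0 \leftrightarrow \D\Exp_x$ correspondence cleanly in the matrix-through-frames formalism of Lemma~\ref{lemma:jacobi-fields-decomposition}, since $\D\Exp_x(tv)$ is intrinsically a map between two different tangent spaces and one must be careful that the orthonormal basis $\{e_i\}$ of $\T_x\calM$ used to set up $J_0$ is the same one used to read off the matrix of $\D\Exp_x(tv)$, with the target expressed in the parallel-transported frame. Once the bookkeeping is fixed, the rest is a direct invocation of the standard Jacobi-field formula for $\D\Exp$, so no substantial new computation is needed.
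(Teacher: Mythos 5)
Your proposal is correct and follows essentially the same route as the paper: identify the columns of $J_0$ with the Jacobi fields $t \mapsto \D\Exp_x(tv)[t e_i]$ (so that $J_0(t) = t\, \D\Exp_x(tv)$ in the parallel frame coordinates) and then invoke the definition of conjugate points. The only slip is that your displayed identity drops a factor of $t$ --- the correct identification is $J_0(t)[w] = \D\Exp_x(tv)[tw] = t\,\D\Exp_x(tv)[w]$, not $\D\Exp_x(tv)[w]$ --- which is harmless for the invertibility conclusion since $t > 0$, as your own remark about the nonzero scalar already anticipates.
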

\begin{proof}
	Let $(x, v) \in \T\calM$ such that $\gamma(t) = \Exp_x(tv)$.
	By~\citep[Prop.~10.10]{lee2018riemannian}, for every $w \in \T_x \calM$, the Jacobi field along $\gamma$ such that $J(0) = 0$ and $\D_t J(0) = w$ is given by 
	\begin{equation} \label{eq:J_0}
		J(t) = \D\Exp_x(tv)[tw].
	\end{equation}
	Consider now the basis $\{e_1, \dots, e_n\}$ of $\T_x \calM$ from Lemma~\ref{lemma:jacobi-fields-decomposition}.
	The $i$th column of $J_0$ is the Jacobi field from~\eqref{eq:J_0} corresponding to $w = e_i$, so that we have $J_0(t) = t \D\Exp_x(tv)$ (in frame coordinates).
	From this expression, it is clear that $J_0(t)$ is invertible for all $t \in (0, 1]$ exactly when there are no conjugate points along $\gamma$.
\end{proof}

In the following lemma, we show a connection between the Hessian of the half-squared 
distance function and Jacobi matrices along a geodesic.
The result can be found without proof in \citep{villani2011regularity} 
and \citep{villani2009optimal}.
For completeness, we include a proof here.

\begin{lemma} \label{prop:hessian-half-squared-distance-function}
	Let $\calM$ be a complete Riemannian manifold.
    Let $x \in \calM$ and $v \in \T_x \calM$.
	Consider the geodesic $\gamma \colon [0, 1] \rightarrow \calM$, $\gamma(t) = \Exp_x(t v)$ and let $y = \gamma(1)$.
	Define the matrix-valued fields $J_0, J_1$ as in Lemma~\ref{lemma:jacobi-fields-decomposition}.
	Then, if $\|v\| < \inj(x)$, we have that 
    \begin{equation*}
        J_0(1)^{-1} J_1(1) =
		\Hess \Big( x \mapsto \frac{1}{2} \dist(x, y)^2 \Big)(x) .
    \end{equation*} 
\end{lemma}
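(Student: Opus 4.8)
The plan is to identify $\Hess F(x)$, where $F\colon x\mapsto \tfrac12\dist(x,y)^2$, with the initial covariant derivative of a suitable Jacobi field along $\gamma$, and then to read off that covariant derivative from the decomposition in Lemma~\ref{lemma:jacobi-fields-decomposition}. By Lemma~\ref{lemma:dist-smooth} (using $\|v\|=\dist(x,y)<\inj(x)$), the function $F$ is smooth on a neighborhood of $x$, so $\Hess F(x)$ is well defined; moreover, on that neighborhood $\grad F(z)=-\Log_z(y)$ (a standard identity, e.g.\ from the first variation formula for length).

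First I would fix $u\in\T_x\calM$ and a smooth curve $\sigma\colon(-\epsilon,\epsilon)\to\calM$ with $\sigma(0)=x$ and $\sigma'(0)=u$; shrinking $\epsilon$ if necessary (again via Lemma~\ref{lemma:dist-smooth}, the set $\{\dist(\cdot,y)<\inj(\cdot)\}$ being open), the map $s\mapsto w(s)\coloneqq\Log_{\sigma(s)}(y)$ is smooth. By definition of the Hessian as the covariant derivative of $\grad F$,
\[
	\Hess F(x)[u] \;=\; \frac{\D}{\ds}\Big|_{s=0}\grad F(\sigma(s)) \;=\; -\frac{\D}{\ds}\Big|_{s=0}w(s).
\]
Next I would introduce the geodesic variation $\Gamma(s,t)\coloneqq\Exp_{\sigma(s)}\!\big(t\,w(s)\big)$. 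For each $s$ the curve $t\mapsto\Gamma(s,t)$ is a geodesic from $\sigma(s)$ to $\Exp_{\sigma(s)}(w(s))=y$, with velocity $w(s)$ at $t=0$, and $\Gamma(0,\cdot)=\gamma$. Hence the variation field $V(t)\coloneqq\partial_s\Gamma(s,t)\big|_{s=0}$ is a Jacobi field along $\gamma$ (a classical fact about variations through geodesics) with $V(0)=\sigma'(0)=u$ and $V(1)=0$, the latter because $\Gamma(s,1)\equiv y$. The key step is that $w(s)=\partial_t\Gamma(s,t)\big|_{t=0}$, so the symmetry of the Levi--Civita connection gives $\frac{\D}{\ds}\big|_{s=0}w(s)=\frac{\D}{\ds}\partial_t\Gamma\big|_{(0,0)}=\frac{\D}{\dt}\partial_s\Gamma\big|_{(0,0)}=\D_t V(0)$. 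Combined with the display, this yields $\Hess F(x)[u]=-\D_t V(0)$.

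Finally, I would pass to the parallel orthonormal frame $\{E_i(t)\}$ of Lemma~\ref{lemma:jacobi-fields-decomposition} built over an orthonormal basis $\{e_i\}$ of $\T_x\calM$, and express $V$ in those coordinates. Since $\|v\|<\inj(x)$, Lemma~\ref{lemma:no-conjugate-points-inj} gives no conjugate points along $\gamma$, so $J_0(1)$ is invertible by Lemma~\ref{lemma:J_0_invertible}. Evaluating $V(t)=J_1(t)V(0)+J_0(t)\D_t V(0)$ at $t=1$ with $V(0)=u$ and $V(1)=0$ gives $\D_t V(0)=-J_0(1)^{-1}J_1(1)\,u$, hence $\Hess F(x)[u]=J_0(1)^{-1}J_1(1)\,u$ in frame coordinates. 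Letting $u$ range over $\{e_i\}$ identifies the matrix of $\Hess F(x)$ in the basis $\{e_i\}$ with $J_0(1)^{-1}J_1(1)$, which is the claim (the matrix product read through the frames, as in Lemma~\ref{lemma:jacobi-fields-decomposition}).

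I expect the main obstacle to be bookkeeping rather than a genuine difficulty: keeping straight the two minus signs (one from $\grad F=-\Log_\cdot(y)$, one from $\Hess F(x)[u]=-\D_t V(0)$), and making sure the frame used in the Jacobi-field decomposition is the same parallel frame in which $\D_t$ reduces to the ordinary $t$-derivative --- which it is, by construction. The one analytic point to verify is that $\Gamma$ is defined and smooth on a neighborhood of $\{0\}\times[0,1]$, which reduces to smoothness of $s\mapsto\Log_{\sigma(s)}(y)$ near $s=0$ and hence to Lemma~\ref{lemma:dist-smooth}; the symmetry lemma and the Jacobi-field property of geodesic variation fields are classical and can simply be cited.
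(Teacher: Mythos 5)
Your proposal is correct and follows essentially the same route as the paper: the same geodesic variation $\Gamma(s,t)=\Exp_{\sigma(s)}\!\big(t\,\Log_{\sigma(s)}(y)\big)$, the same Jacobi field with $J(0)=u$, $J(1)=0$ and $\D_t J(0)=-\Hess\big(\tfrac12\dist(\cdot,y)^2\big)(x)[u]$, and the same use of Lemmas~\ref{lemma:jacobi-fields-decomposition}, \ref{lemma:no-conjugate-points-inj} and~\ref{lemma:J_0_invertible} to invert $J_0(1)$. The only cosmetic difference is that you derive the initial condition explicitly from $\grad F=-\Log_\cdot(y)$ and the symmetry lemma, where the paper cites the proof of Proposition~10.4 in \citet{lee2018riemannian}.
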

\begin{proof} 
Let $\varepsilon = (\inj(x) - \| v \|) / 2 > 0$ and let $\dot{x} \in \T_x \calM$ with $\| \dot{x} \| = 1$.
Consider the geodesic $\sigma \colon (-\varepsilon, \varepsilon) \rightarrow \calM$ given by $\sigma(s) = \Exp_x(s \dot{x})$
and the one-parameter family of curves 
$\Gamma \colon (-\varepsilon, \varepsilon) \times [0, 1] \rightarrow \calM$ given by 
\begin{equation*}
	\Gamma(s, t) = \Exp_{\sigma(s)}(t \Log_{\sigma(s)}(y)).
\end{equation*}
\noindent We will show that $\dds \Gamma(s, t) \big\vert_{s=0} $ is a Jacobi field.
First, observe that $\Gamma$ is smooth: the exponential map is globally defined and smooth, while 
for the term involving the logarithm, for any $|s| < \varepsilon$, it holds that 
\begin{equation*}
	\dist(\sigma(s), y) \leq 
	\dist(\sigma(s), x) + \dist(x, y) \leq 
	|s| \| \dot{x} \| + \| v \| \leq 
	\frac{\inj(x) - \| v \|}{2} + \| v \| < 
	\inj(x).
\end{equation*}
From Lemma~\ref{lemma:dist-smooth}, we conclude that $s \mapsto \Log_{\sigma(s)}(y)$ is smooth, which confirms that 
$\Gamma$ is smooth.

Now, observe that $\Gamma$ is a variation of $\gamma$ through geodesics. 
Indeed, it is a \emph{variation of $\gamma$} since $\Gamma$ is smooth on its domain and $\Gamma(0, t) = \gamma(t)$.
Moreover, it is a \emph{variation through geodesics} since all the main curves $\Gamma_s(t) \coloneq \Gamma(s, t)$ obtained by fixing $s$ are geodesics.
It follows that $J(t) \coloneq \dds \Gamma(s, t) \big\vert_{s=0}$ is a Jacobi field along $\gamma$.
Following the proof of Prop.~10.4 in \citep{lee2018riemannian}, the initial conditions of $J$ are given by
\begin{align*}
	J(0) &= \sigma'(0) = \dot{x}, \textrm{ and } \\ 
	\D_t J(0) &= \left. \D_s \Log_{\sigma(s)}(y) \right|_{s = 0} = 
	- \Hess \Big( x \mapsto \frac{1}{2} \dist(x, y)^2 \Big)(x)[\dot{x}].
\end{align*}
Using Lemma~\ref{lemma:jacobi-fields-decomposition}, we can write this Jacobi field $J$ as
\begin{equation} \label{eq:jacobi-field-squared-distance}
	J(t) = 
	J_1(t) \; \dot{x} - J_0(t) \; \Hess \Big( x \mapsto \frac{1}{2} \dist(x, y)^2 \Big)(x)[\dot{x}].
\end{equation}
We can also compute $J(1)$ directly:
\begin{equation*}
	J(1) = 
	\dds \Gamma(s, 1) \bigg\vert_{s=0} = 
	\dds \Exp_{\sigma(s)}(\Log_{\sigma(s)}(y)) \bigg\vert_{s=0} = 
	\dds y \bigg\vert_{s=0} =
	0.
\end{equation*}
Combining this with~\eqref{eq:jacobi-field-squared-distance} evaluated at $t=1$, we obtain that
\begin{equation} \label{eq:J_1_hess_sq_dist}
	0 = 
	J(1) = 
	J_1(1) \; \dot{x} - J_0(1) \; \Hess \Big( x \mapsto \frac{1}{2} \dist(x, y)^2 \Big)(x)[\dot{x}].
\end{equation}
Since we assumed $\| v \| < \inj(x)$, from Lemma~\ref{lemma:no-conjugate-points-inj} we know that there are no conjugate points along $\gamma$.
It follows from Lemma~\ref{lemma:J_0_invertible} that $J_0(1)$ is invertible.
Since $\dot{x}$ was arbitrary, the conclusion follows from~\eqref{eq:J_1_hess_sq_dist}.
\end{proof}

To obtain an expression for $\D g(x)$, we write $\D g(x)[\dot x]$ as $J(1)$ for some Jacobi field $J$ to be constructed, then we decompose it via Lemma~\ref{lemma:jacobi-fields-decomposition}.
This makes $J_0, J_1$ appear: we eliminate them partially via Lemma~\ref{prop:hessian-half-squared-distance-function}, which is how a squared distance function appears in the expression.
This is advantageous because it is easily controlled later on in terms of the curvature of the manifold.

\begin{lemma} \label{lemma:Dgx}
	Let $f \colon \calM \to \reals$ be $C^2$ on a complete Riemannian manifold $\calM$.
	Fix $x \in \calM$ and let $\alpha > 0$ satisfy $\alpha \| \grad f(x) \| < \inj(x)$.
	Define the curve $\gamma \colon [0, 1] \to \calM$ by 
	\begin{equation*}
		\gamma(t) = \Exp_x(-\alpha t \grad f(x)).
	\end{equation*}
	Notice its relation to~\eqref{eq:ExpRGD}, i.e., the iteration map $g(x) = \Exp_x(-\alpha \grad f(x))$.

	Fix $y = \gamma(1) = g(x)$ and define $h \colon \calM \to \reals$ as the half-squared distance to $y$, that is,
	\begin{equation*}
		h(z) = \frac{1}{2} \dist(z, y)^2.
	\end{equation*}
	Then, the differential of the iteration map $g$ satisfies
	\begin{align}
        \D g(x) = 
        J_0(1) \Big( \Hess\,h(x) - \alpha \Hess f(x) \Big).
		\label{eq:DgxJzeroinverseetc}
  \end{align}
	(Recall from Lemma~\ref{lemma:Dgx-critical}  that if $x$ is critical for $f$ then $\D g(x) = I - \alpha \Hess f(x)$.)
\end{lemma}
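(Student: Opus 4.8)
The plan is to realize $\D g(x)[\dot x]$, for an arbitrary $\dot x \in \T_x\calM$, as the endpoint value of a suitable Jacobi field along $\gamma$ and then read that value off from the decomposition of Lemma~\ref{lemma:jacobi-fields-decomposition}. First I would pick a smooth curve $\sigma$ with $\sigma(0) = x$ and $\sigma'(0) = \dot x$ and form the variation
\[
	\Gamma(s, t) = \Exp_{\sigma(s)}\big(-\alpha t\,\grad f(\sigma(s))\big).
\]
Since $\calM$ is complete, $\Exp$ is defined on all of $\T\calM$, and since $f$ is $C^2$ the field $\grad f$ is $C^1$; hence $\Gamma$ is well defined, smooth in $t$, and $C^1$ in $s$. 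For each fixed $s$ the curve $t \mapsto \Gamma(s,t)$ is the geodesic with initial data $\big(\sigma(s), -\alpha\grad f(\sigma(s))\big)$, so $\Gamma$ is a variation of $\gamma = \Gamma(0,\cdot)$ through geodesics and $J(t) := \dds\Gamma(s,t)\big|_{s=0}$ is a Jacobi field along $\gamma$ (as in the proof of Lemma~\ref{prop:hessian-half-squared-distance-function}, or \citep[Prop.~10.4]{lee2018riemannian}). Evaluating the variation at the two endpoints, $J(1) = \dds g(\sigma(s))\big|_{s=0} = \D g(x)[\dot x]$ and $J(0) = \sigma'(0) = \dot x$.

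Next I would identify the initial covariant derivative. For a variation of the form $\Exp_{\sigma(s)}(tV(s))$ one has $\D_t J(0) = \D_s V(s)\big|_{s=0}$; here $V(s) = -\alpha\grad f(\sigma(s))$, so $\D_t J(0) = -\alpha\,\nabla_{\dot x}\grad f = -\alpha\,\Hess f(x)[\dot x]$, where I use that the covariant derivative along $\sigma$ of the pulled-back gradient field equals $\nabla\grad f$ in the direction $\sigma'(0)$. Decomposing $J$ in a parallel frame along $\gamma$ via Lemma~\ref{lemma:jacobi-fields-decomposition} and evaluating at $t=1$ then gives
\[
	\D g(x)[\dot x] = J(1) = J_1(1)\,J(0) + J_0(1)\,\D_t J(0) = J_1(1)\,\dot x - \alpha\,J_0(1)\,\Hess f(x)[\dot x].
\]
It remains to rewrite $J_1(1)\dot x$. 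Because $\alpha\|\grad f(x)\| < \inj(x)$, there are no conjugate points along $\gamma$ by Lemma~\ref{lemma:no-conjugate-points-inj}, so $J_0(1)$ is invertible by Lemma~\ref{lemma:J_0_invertible}; and since $y = \gamma(1) = g(x)$ with $\|-\alpha\grad f(x)\| < \inj(x)$, Lemma~\ref{prop:hessian-half-squared-distance-function} applied with $v = -\alpha\grad f(x)$ yields $J_0(1)^{-1} J_1(1) = \Hess\,h(x)$, i.e.\ $J_1(1) = J_0(1)\,\Hess\,h(x)$. Substituting and factoring out $J_0(1)$ gives $\D g(x)[\dot x] = J_0(1)\big(\Hess\,h(x) - \alpha\,\Hess f(x)\big)[\dot x]$; since $\dot x$ was arbitrary, this is the claim (with all operators read in the parallel frame of Lemmas~\ref{lemma:jacobi-fields-decomposition} and~\ref{prop:hessian-half-squared-distance-function}). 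As a check, if $x$ is critical then $y = x$, hence $\Hess\,h(x) = I$ and $J_0(1) = \D\Exp_x(0) = I$, recovering $\D g(x) = I - \alpha\Hess f(x)$.

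I expect the main difficulty to lie not in the idea but in two bookkeeping points. First, the regularity: with $f$ only $C^2$ the variation $\Gamma$ is merely $C^1$ in $s$, so I would be explicit that this still makes $J$ a genuine Jacobi field --- the point being that $t \mapsto \Gamma(s,t)$ is a smooth geodesic for every $s$, depending $C^1$-smoothly on its initial data, so $J(t) = \dds\Gamma(s,t)|_{s=0}$ is smooth in $t$ and satisfies the linear Jacobi equation as the linearization of the geodesic equation. Second, the frame conventions: $\D g(x)$ maps $\T_x\calM \to \T_y\calM$ while $\Hess\,h(x)$ and $\Hess f(x)$ act on $\T_x\calM$ and $J_0(1)$ carries frame coordinates at $x$ to frame coordinates at $y$, so the matrix products must all be formed with the same parallel frame used in Lemma~\ref{prop:hessian-half-squared-distance-function}; likewise one must keep the sign in $\D_t J(0) = -\alpha\Hess f(x)[\dot x]$ straight (recall $\grad_z\,\tfrac12\dist(z,y)^2 = -\Log_z(y)$, which is where the analogous sign in Lemma~\ref{prop:hessian-half-squared-distance-function} comes from).
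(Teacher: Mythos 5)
Your proposal is correct and follows essentially the same route as the paper's own proof: the same variation $\Gamma(s,t) = \Exp_{\sigma(s)}\!\big(-\alpha t \,\grad f(\sigma(s))\big)$, the same identification of the initial conditions $J(0)=\dot x$ and $\D_t J(0) = -\alpha \Hess f(x)[\dot x]$, the decomposition of Lemma~\ref{lemma:jacobi-fields-decomposition}, and the substitution $J_0(1)^{-1}J_1(1) = \Hess\,h(x)$ from Lemma~\ref{prop:hessian-half-squared-distance-function} under the hypothesis $\alpha\|\grad f(x)\| < \inj(x)$. Your extra remarks on the $C^1$-in-$s$ regularity and on frame conventions only make explicit points the paper passes over quickly, so nothing further is needed.
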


\begin{proof} 
Let $\dot{x} \in \T_x \calM$.
Let $I$ be an open interval around $0$ and define a smooth curve $\sigma \colon I \to \calM$ 
with $\sigma(0) = x, \sigma'(0) = \dot{x}$.
Since $g$ is $C^1$, we can write its differential as 
\begin{equation}
	\D g(x)[\dot{x}] = 
	\dds g(\sigma(s)) \bigg\vert_{s=0} = 
	\dds \Exp_{\sigma(s)} \Big(-\alpha \grad f(\sigma(s)) \Big) \bigg\vert_{s=0}.
\label{eq:Dgx-first}
\end{equation}
To relate this to a Jacobi field, we consider the variation of $\gamma$ through geodesics
\begin{align*}
\Gamma \colon I \times [0, 1] \to \calM, &&
\Gamma(s, t) = \Exp_{\sigma(s)}( t V(s)), 
\end{align*}
where we defined $V$ as the vector field on $\sigma$ given by $V(s) = -\alpha \grad f(\sigma(s))$. 
Note that indeed $\Gamma$ is $C^1$ everywhere, and is a variation of $\gamma$ since 
\begin{equation*}
	\Gamma(0, t) = \Exp_{\sigma(0)}(-\alpha V(0)) = \Exp_x (-\alpha \grad f(x)) = \gamma(t).
\end{equation*}
It is also a variation through geodesics since the curves $\Gamma_s(t)$ obtained by fixing $s$ are geodesics.
The variation field $\dds \Gamma(s, t) \big\vert_{s=0} \coloneq J(t)$ is a Jacobi field along $\gamma$, and from the proof of \cite[Prop.~10.4]{lee2018riemannian}, we identify its initial conditions as 
\begin{align*}
	J(0) &= \dot{x}, \textrm{ and} \\ 
	\D_t J(0) &= 
	\D_s V(0) = 
	-\alpha \D_s \grad f(\sigma(s)) \big\vert_{s=0} = 
	-\alpha \Hess f(x)[\dot{x}] . 
\end{align*}
Notice that $\D g(x)[\dot{x}]$ is this Jacobi field evaluated at $t=1$.
Indeed, from \eqref{eq:Dgx-first}, we have 
\begin{equation*}
	\D g(x)[\dot{x}] =
	\dds \Exp_{\sigma(s)}(-\alpha \grad f(\sigma(s))) \Big\vert_{s=0} = 
	\dds \Gamma(s, 1) \Big\vert_{s=0} = 
	J(1).
\end{equation*}
In remains to apply Lemma~\ref{lemma:jacobi-fields-decomposition} to expand $J(1)$ as follows:
\begin{align*}
	\D g(x)[\dot{x}] = J(1) &= 
	J_1(1) \dot{x} - \alpha J_0(1) \Hess f(x)[\dot{x}] \\ &= 
	J_0(1) \Big( \Hess\,h(x) - \alpha \Hess f(x) \Big)[\dot x].
\end{align*}
In the second step above, we used the identity $J_0(1)^{-1} J_1(1) = \Hess\,h(x)$, which follows from 
Lemma~\ref{prop:hessian-half-squared-distance-function} owing to the assumption $\alpha \| \grad f(x) \| < \inj(x)$.
\end{proof}

We now have the necessary ingredients to study invertibility of $\D g(x)$. 
The first step is to ensure $\alpha \| \grad f(x) \| < \inj(x)$ so that the squared distance function $h$ is smooth.
With this, Lemmas~\ref{lemma:J_0_invertible} and~\ref{lemma:Dgx} together show that $\D g(x)$ is singular if and only if $\Hess \big( h - \alpha f \big)(x)$ is singular.
Our approach in the remainder of this section is to impose a strict upper bound on $\alpha$ to ensure that this Hessian remains non-singular \emph{everywhere}.
(In light of Theorem~\ref{thm:luzin-property}, this may be more conservative than necessary.)

\subsection{The case of Hadamard manifolds} \label{section:hadamard}

A \emph{(Cartan--)Hadamard manifold} is a Riemannian manifold that is complete and simply connected and whose sectional curvature is non-positive~\citep[Ch.~12]{lee2018riemannian}.
When the cost function $f$ on such a manifold is $C^2$ with an $L$-Lipschitz gradient, we show that \eqref{eq:ExpRGD} avoids strict saddles for all step sizes $\alpha \in (0, 1/L)$.
Since $\Rn$ is a Hadamard manifold, this is a strict generalization from the Euclidean case.

\begin{theorem} \label{thm:hadamard-strict-saddle-avoidance}
  Let $f \colon \calM \to \reals$ be $C^2$ on a Hadamard manifold $\calM$.
	Assume $\grad f$ is $L$-Lipschitz. 
	Then, if $0 < \alpha < 1/L$, \eqref{eq:ExpRGD} avoids the strict saddles of $f$.
\end{theorem}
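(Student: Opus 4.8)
The plan is to show that the \eqref{eq:ExpRGD} iteration map $g(x) = \Exp_x(-\alpha\,\grad f(x))$ has an everywhere-invertible differential. Once this is established, Theorem~\ref{thm:luzin-property} gives the Luzin $N^{-1}$ property for $g$, and Corollary~\ref{cor:rgd-template-theorem} yields strict saddle avoidance.

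First I would record the two consequences of the Hadamard hypothesis. Since $\calM$ is complete, simply connected and has nonpositive sectional curvature, the Cartan--Hadamard theorem says $\Exp_x$ is a diffeomorphism from all of $\T_x\calM$ onto $\calM$; hence $\inj(x) = +\infty$ for every $x$, and by Lemma~\ref{lemma:no-conjugate-points-inj} there are no conjugate points along any geodesic. In particular the hypothesis $\alpha\|\grad f(x)\| < \inj(x)$ of Lemma~\ref{lemma:Dgx} holds at every $x \in \calM$ (recall $\calM$ is complete, as Hadamard manifolds are). Fixing $x$, setting $y := g(x)$ and $h(z) := \tfrac12\dist(z,y)^2$, Lemma~\ref{lemma:Dgx} gives $\D g(x) = J_0(1)\big(\Hess h(x) - \alpha\Hess f(x)\big)$, and Lemma~\ref{lemma:J_0_invertible} (no conjugate points along $\gamma$) says $J_0(1)$ is invertible. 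So everything reduces to showing that $\Hess h(x) - \alpha\Hess f(x)$ is nonsingular at every $x$.

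Next I would bound the two Hessians in the sense of quadratic forms. For the squared-distance term, the Hessian comparison theorem on manifolds of nonpositive curvature gives $\Hess\big(z \mapsto \tfrac12\dist(z,y)^2\big)(x) \succeq \Id_{\T_x\calM}$ (equivalently $J_0(1)^{-1}J_1(1) \succeq \Id$, which can also be derived directly from the matrix Jacobi equation with a Riccati comparison against the flat model); thus $\Hess h(x) \succeq \Id$. For the cost term, the $L$-Lipschitz assumption on $\grad f$ (Definition~\ref{prop:grad-L-lipschitz-alternative}) together with self-adjointness of the Riemannian Hessian gives $\langle \Hess f(x)[u], u\rangle \le L\|u\|^2$ for all $u \in \T_x\calM$. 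Hence, for every nonzero $u$,
\[
\big\langle (\Hess h(x) - \alpha\Hess f(x))[u],\, u \big\rangle \ge \|u\|^2 - \alpha L\|u\|^2 = (1 - \alpha L)\,\|u\|^2 > 0,
\]
since $0 < \alpha < 1/L$. So $\Hess h(x) - \alpha\Hess f(x)$ is positive definite, in particular invertible, at every $x$. Combined with the invertibility of $J_0(1)$, this shows $\D g(x)$ is invertible for all $x \in \calM$, and the theorem follows from Theorem~\ref{thm:luzin-property} and Corollary~\ref{cor:rgd-template-theorem}.

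The main obstacle is the bound $\Hess(\tfrac12\dist(\cdot,y)^2) \succeq \Id$; the rest is bookkeeping with the lemmas of Section~\ref{sec:JacobiDgx}. If the section is to be self-contained, I would prove it by working with the matrix Jacobi equation $J'' + \mathcal{R}(t)J = 0$ along $\gamma$, where $\mathcal{R}(t) \preceq 0$ by the curvature assumption, and comparing the relevant shape/ratio operators ($J_0'J_0^{-1}$ for $t>0$, and $J_0(1)^{-1}J_1(1)$) against the Euclidean model $\mathcal{R}\equiv 0$, where the ratio equals $\Id$ exactly; nonpositive curvature makes geodesics spread at least as fast as in $\Rn$, which is precisely the needed inequality. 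Alternatively one can simply cite a standard reference for the Hessian of the squared distance on Hadamard manifolds.
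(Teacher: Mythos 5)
Your proposal is correct and follows essentially the same route as the paper: reduce to everywhere-invertibility of $\D g(x)$ via Lemma~\ref{lemma:Dgx} and Lemma~\ref{lemma:J_0_invertible} (no conjugate points on a Hadamard manifold), bound $\Hess\,h(x) \succeq I$ by the Hessian comparison for nonpositive curvature and $\alpha \Hess f(x) \prec I$ by the Lipschitz assumption, and conclude with Theorem~\ref{thm:luzin-property} and Corollary~\ref{cor:rgd-template-theorem}. The only cosmetic difference is that the paper treats critical points separately via Lemma~\ref{lem:Dg-not-singular-critical-point}, whereas you apply Lemma~\ref{lemma:Dgx} uniformly (which is also valid, since $\alpha\|\grad f(x)\| = 0 < \inj(x)$ there).
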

\begin{proof} 
	Let $g(x) = \Exp_x(-\alpha\grad f(x))$ as in~\eqref{eq:ExpRGD}.
	Below, we show that $\D g(x)$ is never singular.
	Theorem~\ref{thm:luzin-property} then implies that $g$ has the Luzin $N^{-1}$ property, and the conclusion follows from Corollary~\ref{cor:rgd-template-theorem}.

	Fix some $x \in \calM$ and define $\gamma \colon [0, 1] \to \calM$, $\gamma(t) = \Exp_x(- \alpha t \grad f(x))$.
	Let $y = \gamma(1) = g(x)$.
	The half-squared distance function $h \colon \calM \to \reals$ defined by $h(z) = \frac{1}{2} \dist(z, y)^2$ is smooth on all of $\calM$~\citep[Prop.~12.9]{lee2018riemannian} (this can also be seen from Lemma~\ref{lemma:dist-smooth} and the fact that the injectivity radius of a Hadamard manifold is infinite).

	If $x$ is a critical point of $f$, then $\D g(x)$ is invertible by Lemma~\ref{lem:Dg-not-singular-critical-point}.
	If $x$ is not critical, then we use Lemma~\ref{lemma:Dgx} as follows.
	Geodesics on Hadamard manifolds do not have conjugate points~\citep[Thm.~11.12]{lee2018riemannian}, so that Lemma~\ref{lemma:J_0_invertible} tells us $J_0(1)$ in~\eqref{eq:DgxJzeroinverseetc} is invertible.
	Therefore, $\D g(x)$ is singular if and only if
	\begin{equation*}
		\Hess\,h(x) - \alpha \Hess f(x)
	\end{equation*}
	is singular.
	The Hessian of the half-squared distance function satisfies $\Hess\,h(x) \succeq I$ since $\calM$ has non-positive sectional curvature~\citep[Thm.~6.6.1]{jost2017riemannian}.
	Also, $\alpha \Hess f(x) \prec I$ since the eigenvalues of $\Hess f(x)$ are at most $L$.
	Thus, the operator above is invertible (it is positive definite).
	We conclude that $\D g(x)$ is invertible for all $x$, as desired. 
\end{proof}

\arxivonly{Further exploiting the favorable properties of Hadamard manifolds, we can also derive a strict saddle avoidance result for the proximal point algorithm: see Appendix~\ref{app:proximalhadamard} (arxiv version).}

\subsection{The case of positive and mixed curvature}

The proof technique used above for Hadamard manifolds relies on the fact that their injectivity radius is infinite.
This generally fails for manifolds that have positive sectional curvature.
The next theorem makes up for this by further restricting the step size $\alpha$, to an interval that is smaller than $(0, 1/L)$.
Before getting to its proof, we provide two corollaries.
The second one regards the Stiefel manifold, which includes the sphere and the orthogonal group as particular cases.

\begin{theorem} \label{thm:positive-curvature}
	Let $(\calM, g)$ be a complete Riemannian manifold with sectional curvature upper bounded by $K_{\max} \in (0, \infty)$ and such that $\inj(\calM) \geq J > 0$.
	Assume $f \colon \calM \to \reals$ is $C^2$ with $L$-Lipschitz continuous gradient bounded as follows: $\sup_{x \in \calM} \| \grad f(x) \| \leq G < \infty$.
	Suppose that the step size satisfies:
	\begin{equation} \label{eq:alpha-pos-curv}
		0 < \alpha < \min\!\Bigg\{ 
			\frac{JL}{G}, 
			\frac{L}{G \sqrt{K_{\max}}} \arccot\!\Bigg( \frac{L}{G \sqrt{K_{\max}} }\Bigg) 
		\Bigg\} \frac{1}{L}.
	\end{equation}
	Then, \eqref{eq:ExpRGD} avoids the strict saddles of $f$.
\end{theorem}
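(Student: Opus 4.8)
The plan is to follow the same skeleton as the Hadamard case (Theorem~\ref{thm:hadamard-strict-saddle-avoidance}): show that under the stated step-size bound the differential $\D g(x)$ of the \eqref{eq:ExpRGD} iteration map is invertible at every $x \in \calM$, then invoke Theorem~\ref{thm:luzin-property} to get the Luzin $N^{-1}$ property and conclude with Corollary~\ref{cor:rgd-template-theorem}. Fix $x \in \calM$ and set $\gamma(t) = \Exp_x(-\alpha t \grad f(x))$, $y = \gamma(1) = g(x)$. The first step is to check that $\alpha \|\grad f(x)\| < \inj(x)$: since $\|\grad f(x)\| \le G$ and $\alpha < JL/(GL) = J/G \le \inj(\calM)/G$, we get $\alpha\|\grad f(x)\| < J \le \inj(x)$. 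This is exactly what is needed so that the half-squared distance function $h(z) = \tfrac12 \dist(z,y)^2$ is smooth near $x$ (Lemma~\ref{lemma:dist-smooth}), that there are no conjugate points along $\gamma$ (Lemma~\ref{lemma:no-conjugate-points-inj}), hence $J_0(1)$ is invertible (Lemma~\ref{lemma:J_0_invertible}), and Lemma~\ref{lemma:Dgx} applies: $\D g(x) = J_0(1)\big(\Hess h(x) - \alpha \Hess f(x)\big)$. So $\D g(x)$ is singular iff $\Hess h(x) - \alpha\Hess f(x)$ is singular (when $x$ is critical this is covered directly by Lemma~\ref{lem:Dg-not-singular-critical-point}, using $\alpha < 1/L$, which the bound~\eqref{eq:alpha-pos-curv} implies).

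The crux is then a lower bound on $\Hess h(x)$ that survives positive curvature. On a Hadamard manifold one had $\Hess h(x) \succeq I$; here one only has an upper curvature bound $K_{\max}$, and the Hessian-comparison theorem for the squared distance function gives, along a minimizing geodesic of length $\ell = \dist(x,y) = \alpha\|\grad f(x)\|$, a bound of the form $\Hess h(x) \succeq \big(\sqrt{K_{\max}}\,\ell \cot(\sqrt{K_{\max}}\,\ell)\big) I$ on the directions orthogonal to $\gamma'$, and $\succeq I$ along $\gamma'$ itself. (This is the standard comparison with the model space of constant curvature $K_{\max}$; it requires $\sqrt{K_{\max}}\,\ell < \pi$, which follows from $\ell < \inj(\calM) \le \pi/\sqrt{K_{\max}}$ for such manifolds, or more directly from the step-size bound.) Writing $\lambda_{\min}(\Hess h(x)) \ge \sqrt{K_{\max}}\,\ell\cot(\sqrt{K_{\max}}\,\ell)$ and $\lambda_{\max}(\alpha\Hess f(x)) \le \alpha L$, the operator $\Hess h(x) - \alpha\Hess f(x)$ is positive definite — hence invertible — as soon as $\alpha L < \sqrt{K_{\max}}\,\ell\cot(\sqrt{K_{\max}}\,\ell)$. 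Since $\ell \le \alpha G$ and $t \mapsto \sqrt{K_{\max}}\,t\cot(\sqrt{K_{\max}}\,t)$ is decreasing on $(0,\pi/\sqrt{K_{\max}})$, it suffices that $\alpha L < \sqrt{K_{\max}}\,\alpha G\cot(\sqrt{K_{\max}}\,\alpha G)$, i.e. $\tan(\sqrt{K_{\max}}\,\alpha G) < \sqrt{K_{\max}}\,\alpha G \cdot (G/L)$; solving this and unwinding with $\arccot$ yields precisely the second term inside the minimum in~\eqref{eq:alpha-pos-curv}. The first term $JL/G$ is the injectivity-radius constraint that made $h$ smooth in the first place, so both constraints are accounted for.

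I expect the main obstacle to be getting the curvature-comparison bound on $\Hess h(x)$ stated and applied cleanly. Three points need care: (i) one must ensure $x$ and $y$ are joined by a \emph{unique minimizing} geodesic and that $x$ lies inside the injectivity radius of $y$ (here it is cleaner to use $\dist(x,y) < \inj(x)$ together with the symmetry afforded by the uniform bound $\inj(\calM) \ge J$), so that $h$ is genuinely $C^2$ at $x$ and its Hessian is the one computed by Jacobi fields; (ii) the comparison inequality is anisotropic — it degrades only in directions perpendicular to the geodesic — so one should take the worst direction, giving the $\cot$ factor as the uniform lower eigenvalue bound; (iii) the monotonicity argument that lets us replace $\ell$ by its upper bound $\alpha G$ must be justified on the correct interval, which is where the bound $\alpha < J L/(GL)$ and $J \le \pi/\sqrt{K_{\max}}$ (automatic when $K_{\max} > 0$ and $\inj(\calM) \ge J$, since $\inj(\calM) \le \pi/\sqrt{K_{\max}}$ by the classical estimate) keep $\sqrt{K_{\max}}\,\ell$ in $(0,\pi)$. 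Once these are in place the algebra reducing the condition to~\eqref{eq:alpha-pos-curv} is routine, and the corollaries for the Stiefel manifold follow by plugging in its known curvature bound and injectivity radius.
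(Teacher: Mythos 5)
Your proposal is correct and follows essentially the same route as the paper's proof: invertibility of $\D g(x)$ at every $x$ via Lemma~\ref{lemma:Dgx} (with $\alpha\|\grad f(x)\|<J\le\inj(x)$ from the first term of the minimum), the curvature-comparison lower bound $\Hess h(x)\succeq \sqrt{K_{\max}}\,\ell\cot(\sqrt{K_{\max}}\,\ell)\,I$ with $\ell=\alpha\|\grad f(x)\|$, the bound $\alpha\Hess f(x)\preceq \alpha L\,I$, monotonicity of $t\mapsto t\cot t$ to replace $\ell$ by $\alpha G$, and then Theorem~\ref{thm:luzin-property} plus Corollary~\ref{cor:rgd-template-theorem}; the paper merely parametrizes $\alpha=\delta J/G$, which is cosmetic. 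One side remark in your point~(iii) is incorrect: an upper curvature bound $K\le K_{\max}$ does \emph{not} imply $\inj(\calM)\le \pi/\sqrt{K_{\max}}$ (flat or negatively curved manifolds have curvature below any positive $K_{\max}$ and arbitrarily large injectivity radius; that estimate needs a positive \emph{lower} curvature bound), so $J$ may well exceed $\pi/\sqrt{K_{\max}}$. This is not load-bearing, however: as you also note and as the paper argues, the second term of the step-size bound together with $\arccot(t)\le\pi/2$ already forces $\sqrt{K_{\max}}\,\alpha G<\pi/2$, which keeps both the Hessian comparison and the monotonicity of $t\cot t$ in their valid range.
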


Observe that for $K_{\max} \to 0$ and $J \to \infty$ we recover the step size restriction $\alpha < 1/L$ from Theorem~\ref{thm:hadamard-strict-saddle-avoidance} for Hadamard manifolds.
We presented the Hadamard case separately since the geometric properties there allowed for a more direct proof.

When sectional curvature is lower-bounded away from zero ($K_{\min} > 0$), the manifold's diameter is bounded.
Thanks to the Lipschitz property of $\grad f$, we can derive a uniform upper bound for $\| \grad f(x) \|$.
If the manifold satisfies additional conditions, a lower bound on the injectivity radius also holds.
In this case, we can make the condition on $\alpha$ more precise, as demonstrated in the following corollary.

\begin{corollary} \label{cor:pos-curvatures}
	Let $(\calM, g)$ be a complete, connected Riemannian manifold with sectional curvatures
	between $K_{\min} > 0$ and $K_{\max} > 0$.
	Assume $f \colon \calM \to \reals$ is $C^2$ and has $L$-Lipschitz continuous gradient and that one of 
	the following holds:
	\begin{itemize}
		\item $\calM$ is even dimensional and orientable, or
		\item $\calM$ is simply connected, $\dim(\calM) \geq 3$ and $K_{\min} > \frac{1}{4} K_{\max}$.
	\end{itemize}
	Then, \eqref{eq:ExpRGD} avoids the strict saddles of $f$ with all step sizes satisfying
	\begin{equation*}
		0 < \alpha < 
		\frac{1}{\pi} \sqrt{\frac{K_{\min}}{K_{\max}}} 
		\arccot\!\Bigg( \frac{1}{\pi} \sqrt{\frac{K_{\min}}{K_{\max}}} \Bigg) \frac{1}{L}.
	\end{equation*}
\end{corollary}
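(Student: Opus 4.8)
The plan is to derive this corollary directly from Theorem~\ref{thm:positive-curvature}: beyond the sectional curvature upper bound $K_{\max}$, that theorem requires a uniform lower bound $J$ on $\inj(\calM)$ and a uniform upper bound $G$ on $\|\grad f\|$, and the extra hypotheses of the corollary ($K_{\min} > 0$ together with one of the two topological conditions) are precisely what supply these. First, since the sectional curvature is bounded below by $K_{\min} > 0$, the Ricci curvature is bounded below by $(n-1)K_{\min}$ with $n = \dim(\calM)$, so Myers's theorem \citep{lee2018riemannian} shows $\calM$ is compact with $\diam(\calM) \le \pi/\sqrt{K_{\min}}$.

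Next I would bound the gradient. Since $\calM$ is compact and $f$ is $C^1$, $f$ attains its minimum at some $x^\ast \in \calM$, where $\grad f(x^\ast) = 0$. Integrating the Hessian bound of Definition~\ref{prop:grad-L-lipschitz-alternative} along a minimizing geodesic from $x^\ast$ to an arbitrary point $x$ gives $\|\grad f(x)\| \le L\,\dist(x, x^\ast) \le L\,\diam(\calM) \le L\pi/\sqrt{K_{\min}}$, so I may take $G := L\pi/\sqrt{K_{\min}} < \infty$. For the injectivity radius I would invoke Klingenberg's classical injectivity radius estimates, which yield $\inj(\calM) \ge \pi/\sqrt{K_{\max}}$ in exactly the two regimes in the statement: for a compact, even-dimensional, orientable manifold with $0 < K \le K_{\max}$; and for a compact, simply connected manifold of dimension at least $3$ with $\tfrac14 K_{\max} < K \le K_{\max}$ (the quarter-pinched case, where $K_{\min} > \tfrac14 K_{\max}$ and $\dim(\calM) \ge 3$ are exactly the standard hypotheses). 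Hence set $J := \pi/\sqrt{K_{\max}} > 0$; all hypotheses of Theorem~\ref{thm:positive-curvature} are then met.

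Finally, substitute $J = \pi/\sqrt{K_{\max}}$ and $G = L\pi/\sqrt{K_{\min}}$ into the step-size condition~\eqref{eq:alpha-pos-curv}. A short computation gives $JL/G = \sqrt{K_{\min}/K_{\max}}$ and $L/(G\sqrt{K_{\max}}) = \tfrac1\pi\sqrt{K_{\min}/K_{\max}}$, so the admissible range becomes
\begin{equation*}
	0 < \alpha < \min\!\left\{\ \sqrt{\frac{K_{\min}}{K_{\max}}},\ \ \frac1\pi\sqrt{\frac{K_{\min}}{K_{\max}}}\,\arccot\!\left(\frac1\pi\sqrt{\frac{K_{\min}}{K_{\max}}}\right)\right\}\frac1L.
\end{equation*}
Since $\arccot(t) \in (0, \pi/2)$ for every $t > 0$, we have $\tfrac1\pi\arccot(\cdot) < 1$, so the second entry of the minimum never exceeds the first, and the bound collapses to exactly the one in the corollary; Theorem~\ref{thm:positive-curvature} then gives strict saddle avoidance for all such $\alpha$.

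The step requiring the most care is matching the precise hypotheses of Klingenberg's injectivity radius estimates to the two alternatives in the corollary (orientability and even dimension in one case; simple connectedness, $\dim \ge 3$ and quarter-pinching in the other). Everything else — Myers's theorem, the gradient bound via a minimizer, and the simplification of the minimum — is routine.
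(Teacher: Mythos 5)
Your proposal is correct and follows essentially the same route as the paper: Klingenberg's estimates give $\inj(\calM) \ge \pi/\sqrt{K_{\max}}$ under the two alternatives, Myers's theorem gives compactness and the diameter bound $\pi/\sqrt{K_{\min}}$, the Lipschitz gradient combined with the existence of a critical point (you use the minimizer) yields $G = L\pi/\sqrt{K_{\min}}$, and these values are plugged into Theorem~\ref{thm:positive-curvature}. Your explicit verification that the first entry of the minimum in~\eqref{eq:alpha-pos-curv} is dominated by the second (via $\arccot(t) < \pi/2$) is a detail the paper leaves implicit, and it is correct.
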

\begin{proof}
	First, if one of the two assumptions holds, then $\inj(\calM) \geq \frac{\pi}{\sqrt{K_{\max}}} =: J$---this is due to Klingenberg, see~\citep[Thm.~5.9 and~5.10]{cheeger1975comparison}.
	From the bound on the sectional curvature, we deduce a bound on the Ricci curvature,
	$R_c(v, v) \geq (n-1) K_{\min} > 0$. 
	Applying Myers Theorem~\cite[Thm.~12.24]{lee2018riemannian}, it follows that $\calM$ is compact with diameter $\diam(\calM) := \sup_{x, y \in \calM} \dist(x, y) \leq \frac{\pi}{\sqrt{K_{\min}}}$.
	In particular, $f$ has at least one critical point $\bar x$.
	It follows from Lipschitz continuity of the gradient that~\citep[Cor.~10.48, Thm.~10.9]{boumal2020intromanifolds}
	\begin{equation}
		\| \grad f(x) \| \leq
		L \; \dist(x, \bar x) \leq
		L \; \diam(\calM) \leq 
		\frac{L \pi}{\sqrt{K_{\min}}} =:
		G.
	\label{eq:grad-norm-bound-positive-curvature}
	\end{equation}
	Plug these values of $J$ and $G$ in Theorem~\ref{thm:positive-curvature} to conclude.
\end{proof}

For the Stiefel manifold, we obtain the following corollary of Theorem~\ref{thm:positive-curvature}

\begin{corollary} \label{cor:stiefel}
	Consider the Stiefel manifold $\St(n, p)$ with the Euclidean metric $\langle U, V \rangle = \trace(U^\top V)$.
	Let $f \colon \St(n, p) \to \reals$ be $C^2$ with $L$-Lipschitz continuous gradient. 
	Assume 
	\begin{equation*}
		0 < \alpha < 
			\frac{1}{\pi \sqrt{p}} \arccot\!\Bigg( \frac{1}{\pi \sqrt{p}} \Bigg) \frac{1}{L}.
 	\end{equation*}
	Then, \eqref{eq:ExpRGD} avoids the strict saddles of $f$.
	In particular, for a sphere ($p = 1$) this holds with $0 < \alpha \leq 0.4/L$.
\end{corollary}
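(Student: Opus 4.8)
The plan is to obtain Corollary~\ref{cor:stiefel} as a direct instance of Theorem~\ref{thm:positive-curvature}: check that $\St(n,p)$ with the Euclidean metric $\langle U,V\rangle = \trace(U^\top V)$ satisfies the theorem's hypotheses with convenient values of $K_{\max}$, $J$ and $G$, then simplify the resulting step-size bound; the sphere statement is the special case $p=1$.

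First I would record three geometric facts about $\St(n,p)$ with the Euclidean metric. (i) Its sectional curvature is bounded above by $1$, so $K_{\max} = 1$ is an admissible choice in Theorem~\ref{thm:positive-curvature}; this is sharp for the sphere $\St(n,1) = \Sn$ and, for the orthogonal group $\St(n,n)$, it is the standard estimate $K(\hat X,\hat Y) = \tfrac14\|[\hat X,\hat Y]\|_{\mathrm F}^2 \le 1$ for orthonormal skew $\hat X,\hat Y$ valid for any bi-invariant metric, the intermediate case $1 < p < n$ being quoted from the literature on Stiefel geometry. (ii) Its injectivity radius satisfies $\inj(\St(n,p)) \ge \pi$, so $J = \pi$ is admissible. (iii) Its diameter satisfies $\diam(\St(n,p)) \le \pi\sqrt p$; a witness of the right magnitude is the curve joining $X$ to an antipodal-type frame that rotates each column of $X$ by $\pi$ inside its own $2$-plane -- this is a geodesic of the embedded metric because its acceleration equals $-\pi^2 X$, which is normal to $\St(n,p)$ at $X$, and it has length $\pi\sqrt p$. (When $p=n$, $\St(n,n)=\On$ has two components, each isometric to $\mathrm{SO}(n)$, so one simply runs the argument on each component.)

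Next, since $\St(n,p)$ is compact, $f$ attains its minimum at a critical point $\bar x$, so $L$-Lipschitz continuity of $\grad f$ (Definition~\ref{prop:grad-L-lipschitz-alternative}) gives, exactly as in the proof of Corollary~\ref{cor:pos-curvatures},
\[
	\|\grad f(x)\| = \|\grad f(x) - \grad f(\bar x)\| \le L\,\dist(x,\bar x) \le L\,\diam(\St(n,p)) \le L\pi\sqrt p .
\]
Applying Theorem~\ref{thm:positive-curvature} with $K_{\max} = 1$, $J = \pi$ and $G = L\pi\sqrt p$, its conclusion -- that \eqref{eq:ExpRGD} avoids the strict saddles of $f$ -- holds whenever
\begin{align*}
	0 < \alpha &< \min\!\Bigg\{ \frac{JL}{G},\ \frac{L}{G\sqrt{K_{\max}}}\arccot\!\Bigg(\frac{L}{G\sqrt{K_{\max}}}\Bigg) \Bigg\}\frac{1}{L} \\
	&= \min\!\Bigg\{ \frac{1}{\sqrt p},\ \frac{1}{\pi\sqrt p}\arccot\!\Bigg(\frac{1}{\pi\sqrt p}\Bigg) \Bigg\}\frac{1}{L}.
\end{align*}
Because $\arccot$ takes values in $(0,\pi/2)$, the second term of the minimum is at most $\tfrac{1}{2\sqrt p} < \tfrac{1}{\sqrt p}$, so the minimum equals $\tfrac{1}{\pi\sqrt p}\arccot(\tfrac{1}{\pi\sqrt p})\,\tfrac1L$, which is precisely the bound asserted in the corollary. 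For $p=1$ this threshold is $\tfrac1\pi\arccot(\tfrac1\pi)\,\tfrac1L = \tfrac1\pi\arctan(\pi)\,\tfrac1L \approx 0.4019/L$, so in particular $0 < \alpha \le 0.4/L$ is admissible for the sphere.

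The main obstacle is not the manipulation above but pinning down the three geometric inputs for $\St(n,p)$ with the Euclidean metric -- the uniform curvature bound $K \le 1$ valid for all $1\le p\le n$, the lower bound $\inj \ge \pi$, and the diameter bound $\diam \le \pi\sqrt p$ -- and, relatedly, handling the disconnected case $p=n$ and the low-dimensional or non-simply-connected situations in which standard injectivity-radius results (Klingenberg's, among others) do not apply off the shelf. These are the steps that require specific facts about Stiefel geometry rather than the machinery already developed above; note that since Theorem~\ref{thm:positive-curvature} accepts any valid bounds, it is enough to use these (possibly crude) values, and no monotonicity argument in the parameters is needed.
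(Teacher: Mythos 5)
Your proposal is correct and follows essentially the same route as the paper: instantiate Theorem~\ref{thm:positive-curvature} with $K_{\max}=1$, $J=\pi$ and $G=L\pi\sqrt{p}$ (the gradient bound coming from Lipschitz continuity and the diameter bound, exactly as in Corollary~\ref{cor:pos-curvatures}), then simplify the minimum using $\arccot(t)\le\pi/2$. The only difference is cosmetic: the paper simply cites the literature for the curvature, injectivity-radius and diameter facts about $\St(n,p)$, whereas you sketch partial justifications (note your antipodal curve only witnesses one distance, not the full diameter bound, so those inputs still rest on the quoted references).
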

\begin{proof}
	The result follows from Theorem~\ref{thm:positive-curvature} with appropriate values for $\Kmax$, $J$ and $G$.
	For curvature, we have $K_{\max} = 1$~\citep[Thm.~10]{zimmermann2025high}.
	For the injectivity radius, we use $\inj(\St(n, p)) = \pi =: J$~\citep[Thm.~3.6]{zimmermann2025injectivity}.
	For the gradient, we use the Lipschitz condition on the gradient (same as in~\eqref{eq:grad-norm-bound-positive-curvature}) to obtain that the gradient norm is bounded by $L \; \diam(\St(n, p)) \leq L \; \pi \sqrt{p} =: G$ owing to the diameter bound $\diam(\St(n, p)) \leq \pi \sqrt{p}$~\citep[Thm.~6.6]{mataigne2024bounds}.
	Theorem~\ref{thm:positive-curvature} then provides a range of step sizes which simplifies using $\arccot(t) \leq \pi/2$ for all $t \geq 0$.
\end{proof}

We now turn to the proof of Theorem~\ref{thm:positive-curvature}.

\begin{proof}[Proof of Theorem~\ref{thm:positive-curvature}]
	We restrict the proof to the case where the manifold is connected, ensuring that all distance functions are well-defined.
	However, the analysis extends to the general case of disconnected manifolds, since the algorithm remains confined to the connected component containing the initial point.
	This follows from the fact that the exponential map moves along continuous curves, and thus cannot transition between disconnected components.

	From Corollary~\ref{cor:rgd-template-theorem}, we know \eqref{eq:RGD} avoids strict saddles if the iteration map has the Luzin $N^{-1}$ property.
	Owing to Theorem~\ref{thm:luzin-property}, this is so in particular if $\D g(x)$ is invertible for all $x \in \calM$.
	We proceed to restrict the step size to ensure as much.

	Fix $x \in \calM$.
	Denoting the iteration map with step size $\alpha$ by $g_\alpha$, let $y_\alpha = g_\alpha(x)$.
	Define $h_\alpha \colon \calM \to \reals$, $h_{\alpha}(z) = \frac{1}{2} \dist(z, y_\alpha)^2$.

	Start by imposing $\alpha < \alpha_{\max} \coloneq J / G$.
	With this restriction, we ensure that $\alpha \| \grad f(x) \| < J \leq \inj(x)$.
	From Lemma~\ref{lemma:Dgx}, we can thus write
	\begin{equation} \label{eq:Dg-x-pf}
		\D g_{\alpha}(x) = 
		J_0(1) \Big( \Hess h_{\alpha}(x) - \alpha \Hess f(x) \Big),
	\end{equation}
	where $J_0(t)$ is a matrix whose columns are Jacobi fields, as defined in Lemma~\ref{lemma:jacobi-fields-decomposition}.
	Since $\alpha \| \grad f(x) \| < \inj(x)$, the matrix $J_0(1)$ is invertible by Lemma~\ref{lemma:J_0_invertible},
	so from Eq.~\eqref{eq:Dg-x-pf} we have 
	\begin{align}
		\D g_{\alpha}(x) \textrm{ is singular } && \iff &&
		\Hess h_{\alpha}(x) - \alpha \Hess f(x) \textrm{ is singular}.
		\label{eq:Dg-singular-equiv-cond}
	\end{align}
	Since $K_{\max} > 0$, if $\alpha \| \grad f(x) \| < \pi / (2 \sqrt{K_{\max}})$, then for any $\dot{x} \in \T_x \calM$ we have
	the following bound on $\Hess h_{\alpha}(x)$~\citep[Thm.~6.6.1]{jost2017riemannian}:
	\begin{equation} \label{eq:hess_h_alpha_bound}
		\langle \dot{x}, \Hess h_{\alpha}(x)[\dot{x}] \rangle  \geq \alpha \sqrt{K_{\max}} \| \grad f(x) \| \, \cot\!\Big(\alpha \sqrt{K_{\max}} \| \grad f(x) \| \Big) \| \, \dot{x} \|^2.
	\end{equation}
	We cannot yet claim that $\Hess h_{\alpha}(x) - \alpha \Hess f(x)$ is invertible, like we did in the Hadamard case: this is because $t \cot(t)$ is smaller than 1 for $t \in (0, \pi)$.
	To proceed, we further restrict $\alpha$.

	Recall $\alpha_{\max} = J / G$ and define $\omega \colon \calM \to \reals$ by
	\begin{equation*} 
		\omega(x) =
		\alpha_{\max} \sqrt{K_{\max}} \| \grad f(x) \|.
	\end{equation*}
	By design, this function is uniformly bounded as
	\begin{equation} \label{eq:omega-bound}
		\omega(x) \leq J \sqrt{K_{\max}}.
	\end{equation}
	Consider a step size of the form $\alpha = \delta \alpha_{\max}$ for some $\delta \in (0, 1)$.
	Our goal is to determine the largest $\delta$ such that $\Hess\,h(x) - \alpha \Hess f(x)$ is invertible.
	To this end, we rely on the bound for the Hessian of the squared distance function given in~\eqref{eq:hess_h_alpha_bound}, which holds if we require 
	\begin{equation*}
		\delta \alpha_{\max} \| \grad f(x) \| < \frac{\pi}{2 \sqrt{K_{\max}}} .
	\end{equation*}
	This is satisfied if we pick $\delta$ in the interval 
	\begin{equation} \label{eq:first-delta-cond}
		\delta \in \Big( 0, \min\!\big( 1, \frac{\pi}{2 J \sqrt{K_{\max}}} \big) \Big).
	\end{equation}
	From this and~\eqref{eq:omega-bound}, it follows that $\delta \omega(x) \leq \delta J \sqrt{K_{\max}} < \pi / 2$.
	Combining this with the bound from Eq.~\eqref{eq:hess_h_alpha_bound}, and using that $t \to t \cot(t)$ is decreasing on $(0, \pi)$, for any $\dot{x}$ with $\| \dot{x} \| = 1$, we obtain that 
	\begin{align} \label{eq:hess-h-bound-delta}
		\langle \dot{x}, \Hess h_{\alpha}(x) [\dot{x}] \rangle 
		  \geq \delta \omega(x) \cot(\delta \omega(x)) 
		  \geq \delta J \sqrt{K_{\max}} \cot(\delta J \sqrt{K_{\max}}).
	\end{align}
	Moreover, since $\grad f$ is $L$-Lipschitz continuous and by the definition of $\alpha_{\max}$, we have 
	\begin{align} \label{eq:hess-f-bound-delta}
		\alpha \langle \dot{x}, \Hess f(x)[\dot{x}] \rangle 
		\leq \delta \alpha_{\max} L
		= \delta \frac{J L}{G}.
	\end{align}
	From~\eqref{eq:hess-h-bound-delta} and~\eqref{eq:hess-f-bound-delta}, it is clear that if 
	\begin{equation} \label{eq:delta-restriction}
		\frac{L}{G} < \sqrt{K_{\max}} \cot(\delta J \sqrt{K_{\max}}),
	\end{equation}
	then $\Hess h_{\alpha}(x) - \alpha \Hess f(x) \succ 0$.
	Since $x$ was arbitrary, recalling~\eqref{eq:Dg-singular-equiv-cond}, it follows that $\D g_{\alpha}(x)$ is invertible for all $x$ (as desired) provided we meet the assumptions made along the way.
	Specifically, to meet the condition for $\delta$ from~\eqref{eq:delta-restriction}, we impose 
	\begin{equation*}
		\delta < 
		\frac{1}{J \sqrt{K_{\max}}} \arccot\!\Big( \frac{L}{G \sqrt{K_{\max}} }\Big).
	\end{equation*}
	Observe that this condition readily ensures $ \delta < \pi / (2 J \sqrt{K_{\max}}) $ as required by~\eqref{eq:first-delta-cond}, since $\arccot(t) \leq \pi / 2$ for $t \geq 0$.
	Since we also require $\delta \in (0, 1)$, we conclude that \eqref{eq:ExpRGD} avoids strict saddles if the step size satisfies
	\begin{equation*}
		0 < \alpha = 
		\delta \alpha_{\max} < 
		\min\!\Bigg\{ \frac{JL}{G}, \frac{L}{G \sqrt{K_{\max}}} \arccot\!\Big( \frac{L}{G \sqrt{K_{\max}} }\Big) \Bigg\} \frac{1}{L},
	\end{equation*}
	as announced.
\end{proof}

\section{RGD with constant step size: projection retraction on spheres} \label{section:metric-projection-retraction}

The previous section provides saddle avoidance guarantees for \eqref{eq:RGD} with all constant step sizes in an interval, specifically for the exponential retraction.
If the manifold $\calM$ is embedded in a Euclidean space $\calE$, another popular choice is the \emph{metric projection retraction} defined by $\Retr_x(v) = \calP_\calM(x + v)$ where $\calP_\calM \colon \calE \to \calM$ is defined as
\begin{equation*}
	\calP_\calM(y) = \argmin{z \in \calM} \| z - y \|.
\end{equation*}
Here, $\|\cdot\|$ denotes the Euclidean norm on $\calE$.
In general, $\calP_\calM(y)$ is a set, but it reduces to a singleton on a relevant domain so that $\Retr$ as defined earlier is indeed a retraction (possibly restricted to small $v$)~\citep[\S5.12]{boumal2020intromanifolds}.

Accordingly, we consider saddle avoidance for the iteration map
\begin{equation} \label{eq:ProjRGD} \tag{ProjRGD}
	g(x) = \calP_\calM(x - \alpha \grad f(x)),
\end{equation}
whose domain is the interior of the set 
\begin{align*}
	\{ x \in \calM \mid x - \alpha \grad f(x) \textrm{ is in the interior of the set where } \calP_\calM \text{ is single-valued}\}.
\end{align*}
For compact $\calM$, this domain is all of $\calM$ provided $\alpha > 0$ is sufficiently small.
Some saddle avoidance results exist for compact $\calM$ (e.g., in \citep{lee2019strictsaddles}), though they typically require rather small step sizes.

In this section, we focus on the relevant special case where $\calM$ is a sphere $\mathbb{S}^{d} = \{ x \in \reals^{d+1} \mid \|x\| = 1 \}$ (or a product of spheres) with the submanifold metric $\inner{u}{v} = u^\top v$.
The metric projection retraction is defined globally by $\Retr_x(v) = \frac{x+v}{\|x+v\|}$.
We show that~\eqref{eq:ProjRGD} avoids strict saddles under the standard step size condition $0 < \alpha < 1/L$.

We first provide a general statement about the invertibility of $\D g(x)$, then we particularize to products of spheres.

\begin{lemma} \label{lem:Dg-expression-projRGD}
	Let $\calM$ be an embedded submanifold of a Euclidean space $\calE$ and let $f \colon \calM \to \reals$ be $C^2$.
	Then, for $x$ in the domain of $g$ and $\dot{x} \in \T_x \calM$ we have the equivalence 
    \begin{align*}
        \D g(x)[\dot{x}] = 0 
		&& \iff &&
        \dot{x} - \alpha \D \grad f(x)[\dot{x}] \in \N_{g(x)} \calM,
    \end{align*}
		where $\N_{g(x)} \calM$ denotes the \emph{normal space} to $\calM$ at $g(x)$. 
\end{lemma}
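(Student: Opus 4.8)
The plan is to factor $g$ through the metric projection and reduce the claim to an identity about $\ker \D\calP_\calM$. I would write $g = \calP_\calM \circ \Phi$ with $\Phi \colon \calM \to \calE$, $\Phi(x) = x - \alpha \grad f(x)$; here $\grad f$ is regarded as a $C^1$ map $\calM \to \calE$, $x \mapsto \grad f(x) \in \T_x \calM \subseteq \calE$ (it is $C^1$ because $f$ is $C^2$). The chain rule then gives $\D g(x)[\dot x] = \D\calP_\calM(y)\big[\dot x - \alpha \D\grad f(x)[\dot x]\big]$, where $y \coloneqq \Phi(x)$ and $p \coloneqq \calP_\calM(y) = g(x)$. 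So the lemma is exactly the statement $\ker \D\calP_\calM(y) = \N_p\calM$. The point $y$ lies in the domain under consideration, namely the interior of the set on which $\calP_\calM$ is single-valued, and on that set $\calP_\calM$ is $C^1$ --- this is the classical regularity of metric projections onto submanifolds (and is implicit in the very definition of the domain of $g$; compare the real-analytic analogue in Remark~\ref{remark:metric-projection-real-analytic}).

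To compute $\ker \D\calP_\calM(y)$, I would bring in the normal bundle $\N\calM = \{(q, n) : q \in \calM,\ n \in \N_q \calM\}$, a smooth vector bundle over $\calM$ of total dimension $\dim \calE$, with bundle projection $\pi(q, n) = q$ --- a submersion whose differential $\D\pi(q,n)$ has kernel the vertical subspace $\calV_{(q,n)}$, canonically identified with $\N_q\calM$ --- and the smooth endpoint map $E \colon \N\calM \to \calE$, $E(q, n) = q + n$. A one-line computation with a vertical curve $t \mapsto (q, n(t))$ shows that $\D E(q, n)$ maps $\calV_{(q,n)}$ isomorphically onto $\N_q\calM \subseteq \calE$. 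Next I would define $\Psi(z) \coloneqq \big(\calP_\calM(z),\ z - \calP_\calM(z)\big)$; by first-order optimality of $\calP_\calM(z)$ as a minimizer, $z - \calP_\calM(z)$ is orthogonal to $\T_{\calP_\calM(z)}\calM$, so $\Psi$ maps the domain of $\calP_\calM$ into $\N\calM$, it is $C^1$ there, and it satisfies $E \circ \Psi = \mathrm{id}$ and $\calP_\calM = \pi \circ \Psi$.

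Differentiating $E \circ \Psi = \mathrm{id}$ at $y$ yields $\D E(\Psi(y)) \circ \D\Psi(y) = \mathrm{id}_\calE$, hence $\D E(\Psi(y))$ is surjective onto $\calE$ and therefore --- the spaces $T_{\Psi(y)}\N\calM$ and $\calE$ having the same dimension --- a linear isomorphism, with $\D\Psi(y) = \D E(\Psi(y))^{-1}$. Then $\D\calP_\calM(y) = \D\pi(\Psi(y)) \circ \D E(\Psi(y))^{-1}$, so
\begin{equation*}
	\ker \D\calP_\calM(y) = \D E(\Psi(y))\big(\ker \D\pi(\Psi(y))\big) = \D E(\Psi(y))\big(\calV_{\Psi(y)}\big) = \N_{\calP_\calM(y)}\calM = \N_p\calM,
\end{equation*}
using the action of $\D E$ on vertical vectors and $\Psi(y) = (p,\, y - p)$. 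Combined with the first paragraph, this gives $\D g(x)[\dot x] = 0 \iff \dot x - \alpha\D\grad f(x)[\dot x] \in \N_{g(x)}\calM$, as claimed. (As a sanity check one could alternatively note $\D\calP_\calM(y) = I - \Hess F(y)$ for $F = \frac12 \dist(\cdot,\calM)^2$, so that $\D\calP_\calM(y)$ is symmetric with image in $\T_p\calM$, which already yields $\N_p\calM \subseteq \ker \D\calP_\calM(y)$ for free; the reverse inclusion still requires the surjectivity onto $\T_p\calM$ that the normal-bundle argument supplies directly.)

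I expect the genuine content to be the regularity input rather than the bookkeeping: the argument relies on $\calP_\calM$ being $C^1$ on the interior of the single-valued set (equivalently, on $\Psi$ being $C^1$ there), which I would invoke as a standard fact and which is in any case baked into the standing definition of $\mathrm{dom}(g)$. Everything else --- the chain-rule factorization $g = \calP_\calM \circ \Phi$, the identification of $\ker \D\pi$ with the vertical (normal) directions, and the behaviour of $\D E$ on those directions --- is routine.
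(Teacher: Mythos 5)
Your proof is correct, and it takes the same overall route as the paper: factor $g = \calP_\calM \circ \Phi$ with $\Phi(x) = x - \alpha\grad f(x)$, apply the chain rule, and reduce the lemma to the identity $\ker \D\calP_\calM(y) = \N_{\calP_\calM(y)}\calM$ at $y = x - \alpha\grad f(x)$. The difference is in how that identity is handled: the paper simply cites Dudek and Holly for both facts (smoothness of $\calP_\calM$ on the interior of the single-valued set, and the description of $\ker\D\calP_\calM$ as the normal space), whereas you re-derive the kernel identity from scratch via the normal bundle, the endpoint map $E(q,n) = q+n$, and the section $\Psi(z) = (\calP_\calM(z), z - \calP_\calM(z))$; differentiating $E\circ\Psi = \mathrm{id}$ and using that $\D E$ carries the vertical subspace isomorphically onto $\N_q\calM$ gives $\ker\D\calP_\calM(y) = \D E(\Psi(y))(\ker\D\pi(\Psi(y))) = \N_{g(x)}\calM$, which is a clean and correct argument. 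What your version buys is a self-contained proof of the kernel fact; what it does not buy is independence from the regularity input, since you still need $\calP_\calM$ (equivalently $\Psi$) to be $C^1$ on the interior of the single-valued set, and that is exactly the nontrivial content of the Dudek--Holly theorem the paper cites. One small caveat: this regularity is not ``baked into'' the definition of $\mathrm{dom}(g)$ as you suggest --- the domain only guarantees single-valuedness of $\calP_\calM$ near $y$, not differentiability --- so the citation (or a tubular-neighborhood-style argument) is still genuinely needed there; since you do invoke it as a standard fact, the proof stands.
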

\begin{proof}
	Since $\calM$ is smooth, \citet{dudek1994nonlinearprojection} show that $\calP_\calM$ is smooth on the interior of the subset of $\calE$ where $\calP_\calM$ maps to a singleton.
	They further provide that, for any $y$ in that domain, the following holds:
	\begin{align*}
		\ker \D \calP_\calM (y) = \Big( \T_{\calP_\calM(y)} \calM \Big)^\perp = \N_{\calP_\calM(y)} \calM.
	\end{align*}
	By assumption, $y := x - \alpha \grad f(x) \in \calE$ is in said domain.
	Differentiating $g$ at $x$ along $\dot{x} \in \T_x \calM$ yields
	\begin{equation}
			\D g(x)[\dot{x}] = 
			\D \calP_{\calM} (x - \alpha \grad f(x) ) \Big[ \dot{x} - \alpha \D \grad f(x)[\dot{x}] \Big].
	\label{eq:Dg-metric-projection}
	\end{equation}
	Observe that $\calP_\calM(y) = g(x)$ to conclude.
\end{proof}

\begin{proposition} \label{prop:product-spheres-metric-projection}
	Let $\calM = \mathbb{S}^{d_1} \times \cdots \times \mathbb{S}^{d_N}$ be a product of spheres (not necessarily all of the same dimension).
	If $f \colon \calM \to \reals$ is $C^2$ and $\grad f$ is $L$-Lipschitz, then \eqref{eq:ProjRGD} with $0 < \alpha < \frac{1}{L}$ avoids the strict saddles of $f$.
\end{proposition}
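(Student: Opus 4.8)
The plan is to show that $\D g(x)$ is invertible for \emph{every} $x$ in the domain of $g$; Theorem~\ref{thm:luzin-property} then gives the Luzin $N^{-1}$ property for $g$, and Corollary~\ref{cor:rgd-template-theorem} finishes (since \eqref{eq:ProjRGD} is an instance of \eqref{eq:RGD} with $\Retr$ the metric projection retraction). First I would note that $g$ is in fact defined on all of $\calM$: writing $x = (x_1, \dots, x_N)$ and $\grad f(x) = (g_1, \dots, g_N)$ with $g_i \in \T_{x_i}\mathbb{S}^{d_i}$ (so $g_i \perp x_i$), the vector $z_i := x_i - \alpha g_i$ has $\norm{z_i}^2 = 1 + \alpha^2 \norm{g_i}^2 \geq 1 > 0$, hence lies in the region where the projection onto $\mathbb{S}^{d_i}$ is single-valued and smooth, and $g(x)_i = z_i/\norm{z_i}$.

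Next, fix $x$ and $\dot x \in \T_x\calM$. By Lemma~\ref{lem:Dg-expression-projRGD}, $\D g(x)[\dot x] = 0$ if and only if $w := \dot x - \alpha\,\D\grad f(x)[\dot x] \in \N_{g(x)}\calM$. I would compute both objects componentwise using the product structure. For the normal space, since $g(x)_i = z_i/\norm{z_i}$, we have $\N_{g(x)}\calM = \spann(z_1) \times \cdots \times \spann(z_N)$. For $w$, recall that on an embedded submanifold the Riemannian Hessian is the tangential part of the ambient derivative, $\Hess f(x)[\dot x] = \Proj_{\T_x\calM}(\D\grad f(x)[\dot x])$; differentiating the identity $\innersmall{g_i}{x_i}\equiv 0$ shows the $x_i$-component of $(\D\grad f(x)[\dot x])_i$ is $-\innersmall{g_i}{\dot x_i}\,x_i$, whence $(\D\grad f(x)[\dot x])_i = (\Hess f(x)[\dot x])_i - \innersmall{g_i}{\dot x_i}\,x_i$ (equivalently this is the Gauss formula with the sphere's second fundamental form $\II_{x_i}(u,v) = -\innersmall{u}{v}\,x_i$).

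Now the bookkeeping step: $w_i = \dot x_i - \alpha(\Hess f(x)[\dot x])_i + \alpha\innersmall{g_i}{\dot x_i}\,x_i$, where $\dot x_i$, $(\Hess f(x)[\dot x])_i$ and $g_i$ are all orthogonal to $x_i$ while $z_i = x_i - \alpha g_i$ has $x_i$-parallel part $x_i$ and orthogonal part $-\alpha g_i$. Matching the $x_i$-parallel and $x_i$-orthogonal parts of $w_i = c_i z_i$ forces $c_i = \alpha\innersmall{g_i}{\dot x_i}$, and then $w_i \in \spann(z_i)$ is equivalent to
\begin{equation*}
	\dot x_i - \alpha(\Hess f(x)[\dot x])_i + \alpha^2\innersmall{g_i}{\dot x_i}\,g_i = 0 .
\end{equation*}
Stacking over $i$, this reads $\big(\Id - \alpha\Hess f(x) + \alpha^2 P_x\big)[\dot x] = 0$, where $P_x$ is the self-adjoint positive semidefinite operator on $\T_x\calM$ acting blockwise by $(P_x \dot x)_i = \innersmall{g_i}{\dot x_i}\,g_i$ (indeed $\innersmall{\dot x}{P_x\dot x} = \sum_i \innersmall{g_i}{\dot x_i}^2 \geq 0$). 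Since $\grad f$ is $L$-Lipschitz, $\Hess f(x) \preceq L\,\Id$, so for $0 < \alpha < 1/L$ we get $\Id - \alpha\Hess f(x) + \alpha^2 P_x \succeq (1 - \alpha L)\,\Id \succ 0$; hence $\dot x = 0$, i.e. $\D g(x)$ is injective and therefore invertible, completing the proof.

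The main obstacle I anticipate is the second paragraph: one must not conflate $\D\grad f(x)$ (an ambient-valued derivative) with the Riemannian Hessian — the extra normal term arising from the curvature of the spheres is exactly what, after intersecting with $\N_{g(x)}\calM$, contributes the \emph{favorable} positive semidefinite correction $\alpha^2 P_x$; without tracking it one would not even obtain injectivity at strict saddles. The remaining computations are routine linear algebra, but they do rely essentially on the product-of-spheres structure (block-diagonal normal spaces and second fundamental form) and would not go through verbatim for a general embedded submanifold.
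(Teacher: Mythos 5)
Your proof is correct and follows essentially the same route as the paper's: invoke Lemma~\ref{lem:Dg-expression-projRGD}, use the block structure of the normal space of a product of spheres together with the sphere's second fundamental form (equivalently, differentiating $\innersmall{g_i}{x_i}=0$) to separate the tangential Hessian from the normal correction, observe that the correction contributes a positive semidefinite term, and conclude invertibility of $\D g(x)$ from $\Hess f \preceq L\,\Id$ and $\alpha < 1/L$, finishing via Theorem~\ref{thm:luzin-property} and Corollary~\ref{cor:rgd-template-theorem}. The only cosmetic differences are that you work with $N$ spheres directly and package the conclusion as positive definiteness of $\Id - \alpha\Hess f(x) + \alpha^2 P_x$, whereas the paper treats two spheres, identifies the normal-space scalars $\beta,\gamma$, and derives a contradiction by taking an inner product with $(\dot x,\dot y)$ — the same inequality in different clothing.
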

\begin{proof}
For ease of notation, we give a proof for a product of two spheres.
The general case follows similarly.

By inspection, the domain of $g$ is the entire manifold.
For $(x, y) \in \mathbb{S}^{d_1} \times \mathbb{S}^{d_2}$, the iteration map factorizes as $g(x, y) = (g_1(x, y), g_2(x, y))$ with 
\begin{align*}
    g(x, y) & = (g_1(x, y), g_2(x, y))
						  = \Big( \mathcal{P}_{\mathbb{S}^{d_1}} (x - \alpha \grad_x f(x, y)), \mathcal{P}_{\mathbb{S}^{d_2}} (y - \alpha \grad_y f(x, y)) \Big),
\end{align*}
where $\grad_x f(x, y)$ denotes the gradient of $x \mapsto f(x, y)$ for some fixed $y$ and analogously for $\grad_y f(x, y)$.

For contradiction, assume $\D g(x, y)$ is singular.
Then, Lemma~\ref{lem:Dg-expression-projRGD} provides $\dot{x} \in \T_x \mathbb{S}^{d_1}$ and $\dot{y} \in \T_y \mathbb{S}^{d_2}$ (not both zero) such that 
\begin{align}
  (\dot{x}, \dot{y}) - \alpha \D \grad f(x, y)[(\dot{x}, \dot{y})]  
	\in \N_{g_1(x, y)} \mathbb{S}^{d_1} \times \N_{g_2(x, y)} \mathbb{S}^{d_2}.
	\label{eq:oblique-Dg-cond}
\end{align}
Since $\N_{g_1(x, y)} \mathbb{S}^{d_1} = \text{span}\{ x - \alpha \grad_x f(x, y) \}$ and similarly for $\N_{g_2(x, y)} \mathbb{S}^{d_2}$, condition~\eqref{eq:oblique-Dg-cond} implies that there must exist $\beta, \gamma \in \reals $ such that:
\begin{align}
    (\dot{x}, \dot{y}) - \alpha \D \grad f(x, y)[(\dot{x}, \dot{y})] = 
    \Big( \beta (x - \alpha \grad_x f(x, y)),
          \gamma (y - \alpha \grad_y f(x, y)) \Big).
\label{eq:oblique-Dg-cond-2}
\end{align}
Apply $\Proj_{(x, y)}$ (the orthogonal projector to the tangent space of $\calM$ at $(x, y)$) on both sides of~\eqref{eq:oblique-Dg-cond-2}.
Since $\calM$ is a Riemannian submanifold, the projection of the differential of the gradient is the Riemannian Hessian~\citep[Cor.~5.16]{boumal2020intromanifolds}, so that we get
\begin{align}
    (\dot{x}, \dot{y}) - \alpha \Hess f(x, y)[(\dot{x}, \dot{y})] = 
    \Big( - \alpha \beta \grad_x f(x, y), - \alpha \gamma \grad_y f(x, y) \Big).
\label{eq:proj-tangent-space}
\end{align}
Similarly, if we apply $\Proj_{(x, y)}^\perp$ on both sides of \eqref{eq:oblique-Dg-cond-2}, we obtain that
\begin{align}
    - \alpha \Proj_{(x, y)}^\perp \Big( \D \grad f(x, y)[(\dot{x}, \dot{y})] \Big) = 
	\Big( \beta x, \gamma y \Big).
\label{eq:proj-normal-space}
\end{align}
From here, we rewrite the left-hand side with the Gauss formula \citep[Thm.~8.2]{lee2018riemannian} and the fact that, on product manifolds, the \emph{second fundamental form $\mathrm{I\!I}$} factorizes into the second fundamental forms of the individual manifolds.
More specifically, in our setup, this means that 
$\mathrm{I\!I}((\dot{x}_1, \dot{y}_1), (\dot{x}_2, \dot{y}_2)) = 
(\mathrm{I\!I}(\dot{x}_1, \dot{y}_1), \mathrm{I\!I}(\dot{x}_2, \dot{y}_2))$ for all tangent vectors $\dot{x}_1, \dot{x}_2 \in \T_x \mathbb{S}^{d_1}$ and all 
$\dot{y}_1, \dot{y}_2 \in \T_y \mathbb{S}^{d_2}$.
Then, (see also~\citep[\S5.11]{boumal2020intromanifolds} for computations with $\mathrm{I\!I}$)
\begin{align*}
    \Proj_{(x, y)}^\perp (\D \grad f(x, y)[(\dot{x}, \dot{y})]) 
			& = 
    \mathrm{I\!I}\Big( (\dot{x}, \dot{y}), \grad f(x, y) \Big) \\
			& = 
    - \Big( x \dot{x}^\top \grad_x f(x, y), y \dot{y}^\top \grad_y f(x, y) \Big).
\end{align*}
Plugging that in~\eqref{eq:proj-normal-space}, we obtain the relationship
\begin{equation*}
    \alpha \Big( x \dot{x}^\top \grad_x f(x, y), y \dot{y}^\top \grad_y f(x, y) \Big) = 
     \Big( \beta x, \gamma y \Big).
\end{equation*}
From this, we can identify $\beta = \alpha \dot{x}^\top \grad_x f(x, y)$ and 
$\gamma = \alpha \dot{y}^\top \grad_y f(x, y) $. 
Replacing these back in~\eqref{eq:proj-tangent-space} and taking an inner product with $(\dot x, \dot y)$, we obtain
\begin{multline*}
    \Big\langle (\dot{x}, \dot{y}), \big( I - \alpha \Hess f(x, y) \big)[(\dot{x}, \dot{y})] \Big\rangle = \\
    - \alpha^2 \Big( \dot{x}^\top \grad_x f(x, y) \grad_x f(x, y)^\top \dot{x} + 
    \dot{y}^\top \grad_y f(x, y) \grad_y f(x, y)^\top \dot{y} \Big) \leq 0 .
\end{multline*}
However, note that since $0 < \alpha < 1/L$, it must be that $I - \alpha \Hess f(x, y) \succ 0$.
The previous relation contradicts this, so $\D g(x, y)$ cannot be singular.
This holds for all $(x, y)$, hence $g$ has the Luzin $N^{-1}$ property by Theorem~\ref{thm:luzin-property}. 
The saddle avoidance conclusion then follows from Corollary~\ref{cor:rgd-template-theorem}.
\end{proof}

\section{Perspectives}

We end by listing several future directions:
\begin{itemize}
	\item \textbf{Beyond GD and simple algorithms.} Theorem~\ref{thm:modified-armijo} offers a first saddle avoidance result for a simple yet adaptive algorithm that includes discontinuous branching.
	It remains unclear how to do this for algorithms involving more complex conditional logic, mainly because of how discontinuities affect our ability to call on the CSMT.
	In particular, it remains open whether the \emph{standard} backtracking line-search algorithm (that is, Algorithm~\ref{algo:bkt-stab} with $\varepsilon=0$) avoids saddle points.

	\item \textbf{On the analyticity assumptions.} In Theorem~\ref{thm:modified-armijo}, we allow $f$ to be merely $C^2$ but we assume that $v \mapsto \Retr_x(v)$ is real analytic for each $x$.
	While some condition on the retraction is indeed necessary---as demonstrated by Remark~\ref{remark:rgd-analytic-requires-retraction-condition}---perhaps a weaker condition would do.

	\item \textbf{Better constant step size guarantees on manifolds.} We secured saddle avoidance for~\eqref{eq:ExpRGD} and~\eqref{eq:ProjRGD} for sufficiently small step sizes.
	We did so in a way that $\D g$ is nowhere singular.
	However, it would be enough to ensure $\D g$ is invertible almost everywhere.
	Would this extra leeway allow for larger step sizes?
\end{itemize}

\section*{Acknowledgments}

We thank Quentin Rebjock, Christopher Criscitiello and Ir\`ene Waldspurger for helpful discussions.
Chris helped in particular with the proof of Lemma~\ref{prop:hessian-half-squared-distance-function}.

\section*{Funding and Conflicts of interests}
This work was supported by the Swiss State Secretariat for Education, Research and Innovation (SERI) under contract number MB22.00027.

\noindent The authors declare no conflicts of interest.

\bibliographystyle{abbrvnat}
\bibliography{../bibtex/boumal}

\appendix

\section{About the Luzin $N^{-1}$ property} \label{appendix:proofs-luzin}

This section proves Theorem~\ref{thm:luzin-property}.
First, we recall facts about measurable sets and functions.
\begin{enumerate}
	\setlength{\itemsep}{0.3pt}
    \setlength{\parskip}{0pt}
	\item The preimage of any Borel (hence measurable) set under a continuous function is also measurable.
	\item Compact subsets of measurable sets are measurable.
	\item Countable unions and intersections of measurable sets are measurable.
	\item If a set $S$ has measure zero, then any subset of $S$ also has measure zero.
\end{enumerate}

\begin{definition}
	A point $x \in \calM$ is a \emph{critical point} of a differentiable map 
	$g \colon \calM \to \calM$ if the rank of $\D g(x)$ is not maximal.
	We use $\Sigma_g$ to denote the set of all critical points of $g$.
\end{definition}

With this definition, we can restate Theorem~\ref{thm:luzin-property} as follows:
If $g$ is continuously differentiable, then $g$ has the Luzin $N^{-1}$ property if and only if $\mu(\Sigma_g) = 0$.
We prove this after an auxiliary lemma that we think of as a ``non-collapse'' property (consider its contrapositive).

\begin{lemma}[\citet{ponomarev1987submersions}] \label{lemma:luzin-helper-lemma}
	For a continuous map $g \colon \calM \to \calM$, the following properties are equivalent: 
	\begin{itemize}
		\item $g$ has the Luzin $N^{-1}$ property.
		\item for all compact sets $K \subseteq \calM$, we have that 
		\begin{align*}
			\mu(g(K)) = 0
			\qquad \implies \qquad 
			\mu(K) = 0.
		\end{align*}
	\end{itemize}
\end{lemma}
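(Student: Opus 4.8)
The plan is to prove the two implications separately; the forward one is essentially immediate, and the reverse one requires a short measure-theoretic regularization.

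For the forward direction, assume $g$ has the Luzin $N^{-1}$ property and let $K \subseteq \calM$ be compact with $\mu(g(K)) = 0$. Then $g(K)$ is compact (continuous image of a compact set), so it is measurable and has measure zero, and $K \subseteq g^{-1}(g(K))$. Applying the Luzin $N^{-1}$ property to the null set $E = g(K)$ gives $\mu(g^{-1}(g(K))) = 0$, hence $\mu(K) = 0$ as a subset of a null set. No regularity subtleties arise here.

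For the reverse direction, assume the compact-set condition and let $E \subseteq \calM$ with $\mu(E) = 0$; we must show $\mu(g^{-1}(E)) = 0$. The first step is to enlarge $E$ to a Borel null set: working in a countable atlas $\{(U_i, \varphi_i)\}$, each $\varphi_i(E \cap U_i)$ is Lebesgue-null in $\Rn$, hence contained in a $G_\delta$ Lebesgue-null set; pulling these back (transition maps are diffeomorphisms, so they preserve null sets) and taking the countable union yields a Borel set $E' \supseteq E$ with $\mu(E') = 0$. Since $g^{-1}(E) \subseteq g^{-1}(E')$ and $g^{-1}(E')$ is Borel by continuity of $g$, it suffices to prove $\mu(g^{-1}(E')) = 0$; this reduction also restores measurability of the preimage. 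The second step uses that $\calM$, being second countable and locally compact, is $\sigma$-compact, say $\calM = \bigcup_{n} K_n$ with each $K_n$ compact; it is enough to show each measurable set $g^{-1}(E') \cap K_n$ is null. Suppose some such set had positive measure. Covering $K_n$ by finitely many charts and invoking inner regularity of Lebesgue measure inside one of them, we obtain a compact set $K \subseteq g^{-1}(E') \cap K_n$ with $\mu(K) > 0$. But then $g(K) \subseteq E'$ is compact and null, so $\mu(g(K)) = 0$, and the hypothesis forces $\mu(K) = 0$ — a contradiction. Hence $\mu(g^{-1}(E') \cap K_n) = 0$ for all $n$, so $\mu(g^{-1}(E')) = 0$ and therefore $\mu(g^{-1}(E)) = 0$.

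The main obstacle is purely the measure-theoretic bookkeeping: the preimage of a merely null set need not obviously be measurable, and comparing its size to that of its compact subsets requires measurability. This is precisely what the Borel enlargement $E \subseteq E'$ (to recover measurability of preimages) and inner regularity of Lebesgue measure (to extract a compact subset of positive measure) are for; making all of this precise on $\calM$ is done by passing to charts, where ``measure zero'', ``measurable'', and ``inner regular'' all reduce to their standard meanings for Lebesgue measure on $\Rn$.
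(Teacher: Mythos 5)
Your proof is correct and follows essentially the same route as the paper's: the forward direction via the containment $K \subseteq g^{-1}(g(K))$, and the reverse direction via inner regularity to extract a compact subset of positive measure inside the preimage of a null set (you argue directly, the paper by contraposition). Your Borel ($G_\delta$) hull enlargement $E \subseteq E'$ is a worthwhile refinement rather than a different approach: the paper simply asserts that $g^{-1}(E)$ is measurable because $g$ is continuous, a step that is not justified for an arbitrary null set $E$, and your reduction to a Borel null superset makes that point rigorous.
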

\begin{proof}[Proof of Lemma~\ref{lemma:luzin-helper-lemma}, \citep{ponomarev1987submersions}] 
	We prove both statements using contraposition. 

	For the first direction, assume that there exists a compact set $K$ such 
	that $\mu(K) > 0$ yet $\mu(g(K)) = 0$. 
	Letting $E = g(K)$, we see that $\mu(E) = 0$ yet $\mu(g^{-1}(E)) \geq \mu(K) > 0$, because 
	$g^{-1}(g(K)) \supseteq K$, so $g$ does not have the Luzin $N^{-1}$ property. 
	
	For the reverse direction, assume that $g$ does not have the Luzin $N^{-1}$ property.
	That is, there exists a set $E \subseteq \calM$ such that $\mu(E) = 0$ and either $g^{-1}(E)$ is not 
	measurable, or $\mu(g^{-1}(E)) > 0$.
	However, since $E$ has measure zero and $g$ is continuous, $g^{-1}(E)$ is measurable, so we assume 
	it is not of measure zero.
	Since the Lebesgue measure is inner regular \citep[Prop.~1.4.1]{cohn2013measure},
	we can always find a compact set $K \subseteq g^{-1}(E)$ such that $\mu(K) > 0$. 
	Then, $g(K) \subseteq E$ and since $\mu(E) = 0$, we also have $\mu(g(K)) = 0$. 
	Therefore, we constructed a compact set $K$ with $\mu(K) > 0$ and 
	$\mu(g(K)) = 0$. 
	\end{proof}
	
\begin{proof}[Proof of Theorem~\ref{thm:luzin-property}, \citep{ponomarev1987submersions}]
	Let $\Sigma_g = \{ x \in \calM \mid \rank \D g(x) < \dim\calM \}$ be the 
	set of critical points of $g$ and assume $\mu(\Sigma_g) = 0$.
	We first show that $g$ has the Luzin $N^{-1}$ property using Lemma~\ref{lemma:luzin-helper-lemma}.
	Specifically, we show that if a compact set $K \subseteq \calM$ satisfies 
	$\mu(g(K)) = 0$, then $\mu(K) = 0$.

	Let $K$ be an arbitrary compact subset of $\calM$ such that $\mu(g(K)) = 0$.
	Denoting the complement of $\Sigma_g$ in $\calM$ by $\Sigma_g^c$, we can decompose $K$ into 
	\begin{equation*}
		K = (K \cap \Sigma_g) \cup (K \cap \Sigma_g^c) .
	\end{equation*}
	For each point $x$ in $\Sigma_g^c$, the inverse function theorem grants a neighborhood $U_x$ of $x$ such that $g \vert_{U_x}$ is a diffeomorphism onto its image $g(U_x)$. 
	The collection $\{ U_x \}_{x \in K \cap \Sigma_g^c }$ forms a (possibly uncountable) open cover of $K \cap \Sigma_g^c$.
	Using Lindel{\"o}f's lemma, we can select $x_0, x_1, x_2, \ldots$ in $K \cap \Sigma_g^c$ such that $\{ U_{x_i} \}_{i \geq 0}$ is a countable subcover of $K \cap \Sigma_g^c$.
	Then, we can write the set $K$ as  
	\begin{align}
		K = 
		\Big( \Sigma_g \cap K \Big) \cup 
		\Big( \bigcup_{i \geq 0} ( U_{x_i} \cap K ) \Big).
		\label{eq:K-decomposition}
	\end{align}
	Observe that since the set $\Sigma_g$ of critical points of $g$ is closed, the set $\Sigma_g \cap K$ is closed, too, so it is measurable.
	Similarly, $  \bigcup_{i \geq 0} (U_{x_i} \cap K) $ is measurable as a countable union of measurable sets.
	Using the decomposition of $K$ from \eqref{eq:K-decomposition}, we deduce that 
	\begin{align}
		0 \leq 
		\mu(K) \leq 
		\mu\Big( \Sigma_g \cap K \Big) + \mu\Big( \bigcup_{i \geq 0} ( U_{x_i} \cap K ) \Big) = 
		\mu\Big( \bigcup_{i \geq 0} ( U_{x_i} \cap K ) \Big) \leq 
		\sum_{i \geq 0} \mu( U_{x_i} \cap K ),
		\label{eq:K-measure-bound}
	\end{align}
	where in the second to last step we used that $\mu(\Sigma_g) = 0$, so $\mu(\Sigma_g \cap K) = 0$, too.
	Now, since $g \vert_{U_{x_i}}$ is a diffeomorphism onto its image, then in particular we have that 
	$g^{-1} \vert_{g(U_{x_i})}$ preserves sets of measure zero~\cite[Prop.~6.5]{lee2012smoothmanifolds}.
	Because we assumed $\mu(g(K)) = 0$, we also have $\mu(g(U_{x_i} \cap K)) = 0$ for all $i \geq 0$
	and thus also $\mu(U_{x_i} \cap K) = 0$ for all $i \geq 0$.
	Plugging this back in \eqref{eq:K-measure-bound}, we obtain that $\mu(K) = 0$.
	Since $K$ was an arbitrary compact subset of $\calM$ with $\mu(g(K)) = 0$, from 
	Lemma~\ref{lemma:luzin-helper-lemma} we obtain that $g$ has the Luzin $N^{-1}$ property.

	For the reverse implication, assume that $g$ has the Luzin $N^{-1}$ property. 
	We want to show that $\mu(\Sigma_g) = 0$.
	Since $g$ is $C^1$, by Sard's theorem \citep[Thm.~6.2]{milnor1963morse}, \citep[Thm.~6.10]{lee2012smoothmanifolds}, we know that $\mu(g(\Sigma_g)) = 0$.
	Therefore, as we assumed $g$ has the Luzin $N^{-1}$ property, we obtain that $g^{-1}(g(\Sigma_g))$ has measure zero.
	Then, since $\Sigma_g \subseteq g^{-1}(g(\Sigma_g))$, we obtain that $\Sigma_g$ also has measure zero, so we are done.
\end{proof}

\section{About measure zero sets} \label{appendix:proofs-measure-zero}

\begin{proof}[Proof of Lemma~\ref{lemma:vertical-horizontal-slices}]
	We show that the first two items are equivalent; the rest follows by symmetry.
	For a set $A$, let~$1_A$ be the indicator function of $A$.
	Since $S$ is a measurable set, the function $1_S$ is measurable, so from Tonelli's theorem \cite[Prop.~5.2.1]{cohn2013measure}, we have that  
	\begin{align} \label{eq:tonelli}
		\mu_{n+m}(S)
		= \int_{\reals^n \times \reals^m} 1_S(x, y) \intd(x, y) 
		= \int_{\reals^n} \Big( \int_{\reals^m} 1_S(x, y) \intd y \Big) \intd x.
	\end{align}
	At the same time, as $S$ is measurable, the slices $S^x$ are also measurable for all $x$ \cite[Lem.~5.1.2]{cohn2013measure} and their measure is given by 
	\begin{equation*}
		\mu_m(S^x) = 
		\int_{\reals^m} 1_{S_x}(y) \intd y = 
		\int_{\reals^m} 1_{S}(x, y) \intd y .
	\end{equation*}
	Combining this with Eq.~\eqref{eq:tonelli}, we obtain
	\begin{equation*}
		\mu_{n+m}(S) = 
		\int_{\reals^n} \mu_m(S^x) \intd x.
	\end{equation*}
	Now, if $\mu_{n+m}(S) = 0$, then $\mu_m(S^x)$ vanishes $\mu_n$-almost everywhere \cite[Cor.~3.12]{cohn2013measure}.
	The other way around, if $\mu_m(S^x) = 0$ for almost any $x \in \Rn$, then $\mu_{n+m}(S) = 0$ by \cite[Ex.~2.3.7(d)]{cohn2013measure}.
\end{proof}

\arxivonly{\section{Proximal point algorithm on Hadamard manifolds} \label{app:proximalhadamard}

This appendix is included only in the arXiv version of this paper.
Recall Section~\ref{section:hadamard} for context about saddle avoidance on Hadamard manifolds.
The additional result proposed here parallels a Euclidean version stated in~\citep[\S5.2]{lee2019strictsaddles}.

\begin{theorem}
  Let $f \colon \calM \to \reals$ be $C^2$ on a Hadamard manifold $\calM$.
	Assume $\grad f$ is $L$-Lipschitz continuous.
	Consider the \emph{proximal point algorithm}
	\begin{equation*}
		x_{k+1} = \gpp(x_k) \coloneq \argmin{z \in \calM} \, f(z) + \frac{1}{2 \alpha} \dist(x_k, z)^2.
	\end{equation*}
	Then, for $0 < \alpha < 1/L$, iterating $\gpp$ avoids the strict saddle points of $f$.
\end{theorem}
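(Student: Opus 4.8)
The plan is to follow the template of the constant--step--size analysis. I would show that $\gpp$ is a globally defined $C^1$ self-map of $\calM$, identify it as the inverse of a smooth ``implicit gradient step'' map $G$, use this identification to check that $\D\gpp$ is nowhere singular (so $\gpp$ has the Luzin $N^{-1}$ property via Theorem~\ref{thm:luzin-property}), verify that the strict saddles of $f$ are unstable fixed points of $\gpp$, and conclude with Theorem~\ref{thm:saddle-avoidance-template}.

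\emph{Well-definedness and the map $G$.} Fix $x\in\calM$ and set $\Phi_x(z)=f(z)+\tfrac{1}{2\alpha}\dist(x,z)^2$. Since $\grad f$ is $L$-Lipschitz, $\Hess f(z)\succeq -L\,I$ everywhere (Definition~\ref{prop:grad-L-lipschitz-alternative}), and on a Hadamard manifold $\Hess\big(z\mapsto\tfrac12\dist(x,z)^2\big)(z)\succeq I$ \citep[Thm.~6.6.1]{jost2017riemannian}, so $\Hess\Phi_x(z)\succeq(\tfrac1\alpha-L)\,I\succ0$ whenever $0<\alpha<1/L$. Hence $\Phi_x$ is geodesically strongly convex and coercive, and since a Hadamard manifold is complete it has a unique minimizer $\gpp(x)$; thus $\gpp\colon\calM\to\calM$ is defined on all of $\calM$. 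Using $\grad\big(z\mapsto\tfrac{1}{2\alpha}\dist(x,z)^2\big)=-\tfrac1\alpha\Log_z(x)$, the first-order optimality condition for $y=\gpp(x)$ reads $\grad f(y)=\tfrac1\alpha\Log_y(x)$, i.e.\ $x=\Exp_y(\alpha\grad f(y))$. Define $G\colon\calM\to\calM$ by $G(y)=\Exp_y(\alpha\grad f(y))$: strong convexity shows $G$ is injective (two preimages of $x$ would be two critical points of $\Phi_x$), and the optimality condition shows $G$ is onto with $G^{-1}=\gpp$.

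\emph{$\D\gpp$ is nowhere singular.} Carrying out the Jacobi-field computation of Lemma~\ref{lemma:Dgx} for $G$ in place of the RGD map --- the only facts used are that $\inj(\calM)=\infty$ (so Lemma~\ref{prop:hessian-half-squared-distance-function} applies along $t\mapsto\Exp_y(\alpha t\grad f(y))$) and that squared-distance functions on Hadamard manifolds are globally smooth --- yields $\D G(y)=J_0(1)\big(\Hess h_y(y)+\alpha\Hess f(y)\big)$, where $h_y(z)=\tfrac12\dist(z,G(y))^2$ and $J_0(1)=\D\Exp_y(\alpha\grad f(y))$. Geodesics on Hadamard manifolds have no conjugate points \citep[Thm.~11.12]{lee2018riemannian}, so $J_0(1)$ is invertible by Lemma~\ref{lemma:J_0_invertible}; and $\Hess h_y(y)+\alpha\Hess f(y)\succeq I-\alpha L\,I\succ0$. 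Hence $\D G(y)$ is invertible for every $y$, so the bijection $G$ is a $C^1$ diffeomorphism and $\gpp=G^{-1}$ is a $C^1$ diffeomorphism; in particular $\gpp$ has the Luzin $N^{-1}$ property.

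\emph{Fixed points and conclusion.} A point $x^\ast$ is fixed by $\gpp$ iff it minimizes $\Phi_{x^\ast}$, which by the optimality condition (with $\Log_{x^\ast}(x^\ast)=0$) holds iff $\grad f(x^\ast)=0$; so the fixed points of $\gpp$ are exactly the critical points of $f$, and such a point is also fixed by $G$ with $\D G(x^\ast)=I+\alpha\Hess f(x^\ast)$ (evaluate the formula above at a critical point, where $J_0(1)=I$ and $\Hess h_{x^\ast}(x^\ast)=I$; equivalently, argue as in Lemma~\ref{lemma:Dgx-critical}). Therefore $\D\gpp(x^\ast)=(I+\alpha\Hess f(x^\ast))^{-1}$. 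If $x^\ast$ is a strict saddle, $\Hess f(x^\ast)$ has an eigenvalue $\lambda<0$, and $\lambda\geq -L$ by the Lipschitz bound, so $1+\alpha\lambda\in(0,1)$ because $0<\alpha<1/L$; thus $\D\gpp(x^\ast)$ has the eigenvalue $1/(1+\alpha\lambda)>1$ and $x^\ast$ is an unstable fixed point of $\gpp$. Theorem~\ref{thm:saddle-avoidance-template} then shows $\gpp$ avoids its unstable fixed points, hence the strict saddles of $f$. I expect the main obstacle to be the second step: correctly transferring the Jacobi-field differential formula of Lemma~\ref{lemma:Dgx} to the ``backward'' map $G$ (note that $G$ uses $+\alpha\grad f$, effectively a negative step size) and checking that the Hadamard hypotheses make all ingredients --- no conjugate points, invertibility of $J_0(1)$, and the lower bound $\Hess h_y(y)\succeq I$ --- go through; once $\gpp$ is known to be a diffeomorphism, the remaining arguments are routine.
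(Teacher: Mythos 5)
Your proposal is correct and follows essentially the same route as the paper: the paper likewise shows $\gpp$ is well defined via geodesic strong convexity, identifies it as the inverse of the map $y \mapsto \Exp_y(\alpha\grad f(y))$ (the RGD map for $-f$, whose differential is everywhere invertible by the Hadamard-case Jacobi-field argument of Theorem~\ref{thm:hadamard-strict-saddle-avoidance}), deduces the Luzin $N^{-1}$ property and $\D\gpp(x^\ast) = (I+\alpha\Hess f(x^\ast))^{-1}$ at strict saddles, and concludes with Theorem~\ref{thm:saddle-avoidance-template}. The only cosmetic difference is that you re-derive the invertibility of $\D G$ from Lemma~\ref{lemma:Dgx} with the sign flipped, whereas the paper simply cites its Hadamard theorem applied to $-f$.
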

\begin{proof}
	Let $x \in \calM$ and $n = \dim(\calM)$.
	Define $h_x \colon \calM \rightarrow \reals$, $h_x(z) = \frac{1}{2} \dist(z, x)^2$.
	Since $\calM$ is a Hadamard manifold, $\inj(\calM) = \infty$ \citep[Prop.~12.9]{lee2018riemannian} and so $h_x$ is smooth by Lemma~\ref{lemma:dist-smooth}.
	Define now a $C^2$ function $q_x \colon \calM \to \reals$ such that the iteration map becomes
	\begin{align*}
		\gpp(x) = \argmin{z \in \calM} \, q_x(z) && \textrm{ with } && q_x(z) = f(z) + \frac{1}{\alpha} h_x(z).
	\end{align*}
	Before checking if $\gpp$ has the Luzin $N^{-1}$ property, we need to argue that it is well defined.
	Without loss of generality, assume $L > 0$.
	
	First, note that for $0 < \alpha < 1/L$ the function $q_x$ is geodesically strongly convex.
	Indeed, let $\varepsilon = 1/L - \alpha > 0$.
	Since $\grad f$ is $L$-Lipschitz continuous, we have that $\alpha \| \Hess f(z) \| \leq 1 - \varepsilon L$.
	At the same time, $\Hess\,h_x(z) \succeq I$ \citep[Thm.~6.6.1]{jost2017riemannian}.
	Thus 
	\begin{equation} \label{eq:hess-q-x-pd}
		\alpha \Hess\,q_x(z) = \alpha \Hess f(z) + \Hess\,h_x(z) \succeq \varepsilon L \; I.
	\end{equation}
	Since $\calM$ is connected and complete, it is geodesically convex and so~\eqref{eq:hess-q-x-pd} implies that $q_x$ is geodesically strongly convex on $\calM$~\citep[Thm.~11.23]{boumal2020intromanifolds}.
	Therefore, $q_x$ admits exactly one (global) minimizer~\citep[Lem.~11.27]{boumal2020intromanifolds}.
	This confirms that the iteration map $\gpp$ is well defined.

	Now, let $y = \gpp(x)$.
	This is the unique point such that $\grad q_x(y) = 0$.
	Thus, $\alpha \grad f(y) = -\grad h_x(y) = \Log_y(x)$ and so $x = \Exp_y(\alpha \grad f(y))$.
	The right-hand side is nothing but the RGD iteration map for $-f$ at $y$, which we will denote by $\ggd(y)$.
	Therefore, we have 
	\begin{align} \label{eq:gpp-ggd-inverse}
		y = \gpp(x) = \gpp(\ggd(y)) 
		&& \textrm{ and } &&
		x = \ggd(y) = \ggd(\gpp(x)),
	\end{align}
	In the proof of Theorem~\ref{thm:hadamard-strict-saddle-avoidance}, under the same assumptions, we showed that $\D \ggd(y)$ is invertible for all $y \in \calM$. 
	By the inverse function theorem \cite[Thm.~4.5]{lee2012smoothmanifolds}, it follows that $\gpp$ is also $C^1$.
	Differentiating both sides of second identity in Eq.~\eqref{eq:gpp-ggd-inverse} with respect to $x$, we obtain that 
	\begin{equation} \label{eq:dg-pp-dg-gd}
		I = \D \ggd(\gpp(x)) \circ \D \gpp(x),
	\end{equation}
	which shows that $\D \gpp(x)$ is invertible for all $x$ when $0 < \alpha < 1/L$.
	By Theorem~\ref{thm:luzin-property}, this implies that $\gpp$ satisfies the Luzin $N^{-1}$ property. 

	It remains to show that strict saddles are unstable fixed points of $\gpp$.
	Let $x^\ast \in \calM$ be a strict saddle of $f$.
	Note that $\ggd (x^\ast) = \Exp_{x^\ast}(\alpha \grad f(x^\ast)) = x^\ast$ and so $\gpp(x^\ast) = x^\ast$, too, which confirms that $x^\ast$ is fixed for $\gpp$.
	From \eqref{eq:dg-pp-dg-gd} and Lemma~\ref{lemma:Dgx-critical}, it follows that 
	\begin{equation*}
		\D \gpp(x^\ast) = (\D \ggd(x^\ast))^{-1} = (I + \alpha \Hess f(x^\ast))^{-1}.
	\end{equation*}
	Since $\Hess f(x^\ast)$ has a negative eigenvalue in the interval $[-L, 0)$, we find that $\D \gpp(x^\ast)$ has an eigenvalue strictly greater than $1$, as intended.
	Conclude with Theorem~\ref{thm:saddle-avoidance-template}.
\end{proof}}

\end{document}